\documentclass[mathpazo,online,leqno]{jml}



\usepackage{lipsum}

\usepackage{textcomp}
\usepackage{sidecap}

\usepackage[hypertexnames=false]{hyperref}
\hypersetup{
    colorlinks=true,
    citecolor=blue,
    linkcolor=blue,
    urlcolor=teal,
    breaklinks=true
}
\makeatletter
\renewcommand\section{\@startsection{section}{1}{\z@}%
           {25\p@ \@plus 6\p@ \@minus 3\p@}%
           {10\p@ \@plus 6\p@ \@minus 3\p@}%
           {\fontsize{13pt}{0cm}\selectfont\bfseries\boldmath}}

\renewcommand\subsection{\@startsection{subsection}{2}{\z@}%
           {13\p@ \@plus 6\p@ \@minus 3\p@}%
           {6\p@ \@plus 6\p@ \@minus 3\p@}%
           {\fontsize{12pt}{0cm}}}
\renewcommand\subsubsection{\@startsection{subsubsection}{3}{\z@}%
           {12\p@ \@plus 6\p@ \@minus 3\p@}%
           {\p@}%
           {\normalfont\normalsize}}
\renewcommand{\paragraph}[1]{%
  \par
  \addvspace{\medskipamount}
  \noindent
  \textbf{#1\@addpunct{.}}\enspace\ignorespaces
}
\makeatother
\usepackage[x11names,dvipsnames]{xcolor}
\usepackage{graphicx}
\usepackage{url}
\usepackage[doi=false,isbn=false, sortcites=true,eprint=true, date=year, style=numeric,uniquename=init,giveninits=true,sorting=nty,url=false,maxnames=99,backref=true,date=year]{biblatex} 
\usepackage{filecontents}
\begin{filecontents*}{biber.conf}
<?xml version="1.0" encoding="UTF-8"?>
<config>
<sourcemap>
  <maps datatype="bibtex" map_overwrite="1">
  <map>
    <map_step map_field_source="TITLE"
              map_match="(^|\s)(\w+\S*w*)" map_replace="$1\u\L$2"/>
    <map_step map_field_source="TITLE"
              map_match="\-(\w+)" map_replace="\-\u\L$1"/>
    <map_step map_field_source="TITLE"
              map_match="(\s+|\-)(A(|n|nd|s|t|re)|B(ut|y)|F(or|rom)|I(n|s)|O(f|n|r|ver)|T(he|o)|Via|With)\b"
              map_replace="$1\L$2"/>
    <map_step map_field_source="TITLE"
              map_match="([:;]\s+)([a-z])" map_replace="$1\u$2"/>
    <map_step map_field_source="TITLE"
              map_match="Ode" map_replace="ODE"/>
    <map_step map_field_source="TITLE"
              map_match="ode" map_replace="ODE"/>
    <map_step map_field_source="TITLE"
              map_match="MODEl" map_replace="Model"/>
    <map_step map_field_source="TITLE"
              map_match="GeODEsic" map_replace="Geodesic"/>
    <map_step map_field_source="TITLE"
              map_match="Mnist" map_replace="MNIST"/>
    <map_step map_field_source="TITLE"
              map_match="Resnet" map_replace="ResNet"/>
    <map_step map_field_source="TITLE"
              map_match="Mlp" map_replace="MLP"/>
    <map_step map_field_source="TITLE"
              map_match="Nais" map_replace="NAIS"/>
    <map_step map_field_source="TITLE"
              map_match="Bv" map_replace="BV"/>
  </map>
  </maps>
</sourcemap>
</config>

\end{filecontents*}

\DeclareFieldFormat*{volume}{\mkbibbold{#1}} 
\DeclareFieldFormat*{title}{\mkbibquote{#1\adddot}}  
\DeclareFieldFormat*{booktitle}{\mkbibemph{#1}}  
\DeclareSourcemap{
  \maps[datatype=bibtex]{
    \map{
      \step[fieldsource=edition,
            match=\regexp{^(\d+)\.\s+(ed.|edition)$},
            replace=\regexp{$1}]
    }
  }
}
\DeclareFieldFormat*{journaltitle}{\mkbibemph{#1}} 
\DeclareFieldFormat[article,Article,book,inbook,incollection,inproceedings,patent,thesis,unpublished,misc]{year}{#1} %
\renewbibmacro{in:}{} %
\renewbibmacro*{issue+date}{%
  \printfield{issue}%
  \setunit*{\addspace}%
  \usebibmacro{date}%
  \newunit} %
\addbibresource{reference.bib}
\NewBibliographyString{noeditor}
\DefineBibliographyStrings{english}{%
  noeditor      =   {s\adddot ed\adddot},
}

\renewbibmacro*{editor+others}{%
  \ifuseeditor
    {\ifnameundef{editor}
      {\bibstring{noeditor}}
      {\printnames{editor}%
       \setunit{\addcomma\space}%
       \usebibmacro{editor+othersstrg}%
       \clearname{editor}}}
    {}}

\renewbibmacro*{editor}{%
  \ifuseeditor
    {\ifnameundef{editor}
      {\bibstring{noeditor}}
      {\printnames{editor}%
       \setunit{\addcomma\space}%
       \usebibmacro{editorstrg}%
       \clearname{editor}}}
    {}}
\usepackage{amssymb}
\usepackage{amsfonts}
\usepackage{mathrsfs}
\usepackage[capitalize,noabbrev]{cleveref}
\usepackage{amsthm}
\expandafter\let\expandafter\oldproof\csname\string\proof\endcsname
\let\oldendproof\endproof
\renewenvironment{proof}[1][\proofname]{%
  \oldproof[\bfseries\itshape #1] 
}{\oldendproof}
\makeatletter

\makeatletter
\renewenvironment{proof}[1][\proofname]{\par
  \pushQED{\qed}%
  \normalfont \topsep6\p@\@plus6\p@\relax
  \trivlist
  \item\relax
  {\itshape
  #1\@addpunct{.}}\hspace\labelsep\ignorespaces
}{%
  \popQED\endtrivlist\@endpefalse
}
\makeatother

\theoremstyle{plain}
\newtheorem{theorem}{Theorem}[section]
\newtheorem{proposition}[theorem]{Proposition}
\newtheorem{lemma}[theorem]{Lemma}
\newtheorem{corollary}[theorem]{Corollary}
\newtheorem{claim}[theorem]{Claim}

\theoremstyle{definition}
\newtheorem{definition}[theorem]{Definition}
\newtheorem{problem}[theorem]{Problem}
\newtheorem{assumption}[theorem]{Assumption}

\theoremstyle{remark}
\newtheorem{remark}[theorem]{Remark}

\usepackage{array}
\usepackage{latexsym}
\usepackage{subfiles}
\usepackage{amsmath}
\allowdisplaybreaks[4]
\usepackage{stackengine}
\usepackage{empheq}
\usepackage{physics}
\renewcommand*{\eval}[1]{\left.{#1}\right|}
\usepackage{here}
\usepackage{longtable}
\usepackage{lipsum}
\usepackage{caption}
\usepackage{capt-of}
\usepackage{threeparttable}

\usepackage{svg}
\usepackage{enumitem}
\setlist[description]{%
  topsep=10pt,               
  itemsep=5pt,               
  font={\bfseries\rmfamily}, 
}

\usepackage{mathtools}
\usepackage{morewrites}
\usepackage{etoolbox}

\DeclareFontFamily{U}{matha}{\hyphenchar\font45}
\DeclareFontShape{U}{matha}{m}{n}{
<-6> matha5 <6-7> matha6 <7-8> matha7
<8-9> matha8 <9-10> matha9
<10-12> matha10 <12-> matha12
}{}
\DeclareSymbolFont{matha}{U}{matha}{m}{n}

\DeclareFontFamily{U}{mathx}{\hyphenchar\font45}
\DeclareFontShape{U}{mathx}{m}{n}{
<-6> mathx5 <6-7> mathx6 <7-8> mathx7
<8-9> mathx8 <9-10> mathx9
<10-12> mathx10 <12-> mathx12
}{}
\DeclareSymbolFont{mathx}{U}{mathx}{m}{n}

\DeclareMathDelimiter{\vvvert} {0}{matha}{"7E}{mathx}{"17}%

\DeclarePairedDelimiterX{\normiii}[1]
{\vvvert}
{\vvvert}
{\ifblank{#1}{\:\cdot\:}{#1}}
\usepackage{appendix}

\DeclareSymbolFont{EulerExtension}{U}{euex}{m}{n}
\DeclareMathSymbol{\euintop}{\mathop} {EulerExtension}{"52}
\DeclareMathSymbol{\euointop}{\mathop} {EulerExtension}{"48}

\numberwithin{equation}{section}

\allowdisplaybreaks[4]

\usepackage{makecell}
\usepackage{tikz}
\usetikzlibrary{calc}
\tikzset{
    ncbar angle/.initial=90,
    ncbar/.style={
        to path=(\tikztostart)
        -- ($(\tikztostart)!#1!\pgfkeysvalueof{/tikz/ncbar angle}:(\tikztotarget)$)
        -- ($(\tikztotarget)!($(\tikztostart)!#1!\pgfkeysvalueof{/tikz/ncbar angle}:(\tikztotarget)$)!\pgfkeysvalueof{/tikz/ncbar angle}:(\tikztostart)$)
        -- (\tikztotarget)
    },
    ncbar/.default=0.5cm,
}

\tikzset{square left brace/.style={ncbar=0.5cm}}
\tikzset{square right brace/.style={ncbar=-0.5cm}}

\tikzset{round left paren/.style={ncbar=0.5cm,out=120,in=-120}}
\tikzset{round right paren/.style={ncbar=0.5cm,out=60,in=-60}}
\usetikzlibrary{positioning, fit}   
\tikzset{block/.style={draw, thick, text width=2cm ,minimum height=1.3cm, align=center},   
line/.style={-latex}     
}  
\usepackage{boldline}
\usepackage{multirow}
\usepackage{here}
\usepackage[super]{nth}

\crefname{theorem}{Theorem}{Theorems}
\crefname{definition}{Definition}{Definitions}
\crefname{problem}{Problem}{Problems}
\crefname{fact}{Fact}{Facts}
\crefname{proposition}{Proposition}{Propositions}
\crefname{lemma}{Lemma}{Lemmas}
\crefname{corolary}{Corolary}{Corolaries}
\crefname{assumption}{Assumption}{Assumptions}
\crefname{claim}{Claim}{Claims}

\crefname{remark}{Remark}{Remarks}
\crefname{example}{Example}{Examples}
\crefname{corollary}{Corollary}{Corollaries}
\crefname{subsubsection}{Subsection}{Sections}
\crefname{subsection}{Subsection}{Sections}
\crefname{section}{Section}{Sections}
\crefname{chapter}{Chapter}{Chapters}
\crefname{table}{Table}{Tables}
\crefname{figure}{Figure}{Figures}
\crefname{algorithm}{Algorithm}{Algorithms}

\renewcommand{\ref}{\cref}


\newcommand{\Pc}{\mathcal{P}_\mathrm{c}}
\newcommand{\Pcal}{\mathcal{P}}

\newcommand{\Ccinf}{C_c^\infty}
\newcommand{\R}{\mathbb{R}}
\newcommand{\Z}{\mathbb{Z}}
\newcommand{\Y}{\mathcal{Y}}
\newcommand{\la}{\left\langle}
\newcommand{\ra}{\right\rangle}
\newcommand{\intRd}{\int_{\R^d}}
\newcommand{\intRdY}{\int_{\R^d\times\Y}}

\DeclareMathOperator{\Div}{\mathrm{div}}

\DeclareMathOperator*{\Lip}{\mathrm{Lip}}
\DeclareMathOperator*{\supp}{\mathrm{supp}}
\DeclareMathOperator*{\minimize}{\mathrm{minimize}}
\DeclareMathOperator*{\diam}{\mathrm{diam}}

\DeclareMathOperator{\Id}{\mathrm{Id}}

\def\Set#1{\Setdef#1\Setdef}
\def\Setdef#1|#2\Setdef{\left\{#1\,\;\mathstrut\vrule\,\;#2\right\}}%
\renewcommand{\th}{%
    \ifmmode
        ^\mathrm{th}%
    \else%
        \textsuperscript{th}\xspace%
    \fi%
}
\makeatletter
\newcommand{\subalign}[1]{%
  \vcenter{%
    \Let@ \restore@math@cr \default@tag
    \baselineskip\fontdimen10 \scriptfont\tw@
    \advance\baselineskip\fontdimen12 \scriptfont\tw@
    \lineskip\thr@@\fontdimen8 \scriptfont\thr@@
    \lineskiplimit\lineskip
    \ialign{\hfil$\m@th\scriptstyle##$&$\m@th\scriptstyle{}##$\hfil\crcr
      #1\crcr
    }%
  }%
}
\makeatother

\usepackage{tikz}
\usetikzlibrary{matrix,chains,positioning,decorations.pathreplacing,arrows}
\usetikzlibrary{shapes,arrows}
\tikzset{
	font={\fontsize{9pt}{12}\selectfont}}

\begin{document}

\title[Formulations and Existence results for ODE-Net]{{Variational formulations of ODE-Net} {as a mean-field optimal control problem} \mbox{and existence results}}

\author[1]{
Noboru Isobe
	\thanks{
	Corresponding author.
{mail:\tt\href{mailto:nobo0409@g.ecc.u-tokyo.ac.jp}{nobo0409@g.ecc.u-tokyo.ac.jp}}, Supported by {JSPS KAKENHI 22J20130}.
	}
}
\author[2]{
Mizuho Okumura
	\thanks{
		{\tt\href{mailto:okumura.mizuho.p3@gmail.com}{okumura.mizuho.p3@gmail.com}}.
	}
}

\affil[1]{Graduate School of Mathematical Sciences, The University of Tokyo, Tokyo, Japan.}
\affil[2]{Graduate School of Science, Tohoku University, Sendai, Japan.}

\begin{abstract}
This paper presents a mathematical analysis of ODE-Net, a continuum model of deep neural networks (DNNs). 
In recent years, Machine Learning researchers have introduced ideas of replacing the deep structure of DNNs with ODEs as a continuum limit. 
These studies regard the ``learning'' of ODE-Net as the minimization of a ``loss'' constrained by a parametric ODE. 
Although the existence of a minimizer for this minimization problem needs to be assumed, only a few studies have investigated the existence analytically in detail. 
In the present paper, the existence of a minimizer is discussed based on a formulation of ODE-Net as a measure-theoretic mean-field optimal control problem. 
The existence result is proved when a neural network describing a vector field of ODE-Net is linear with respect to learnable parameters. 
The proof employs the measure-theoretic formulation combined with the direct method of Calculus of Variations. 
Secondly, an idealized minimization problem is proposed to remove the above linearity assumption. 
Such a problem is inspired by a kinetic regularization associated with the Benamou--Brenier formula and universal approximation theorems for neural networks. 
\end{abstract}

%
%
%
\keywordone{Deep Learning,}
\keywordtwo{ResNet,}
\keywordthree{ODE-Net,}
\keywordfour{Benamou--Brenier formula,}
\keywordfive{Mean-Field Game.}

\maketitle

\section{Introduction}
Deep Neural Networks~(DNNs), or Deep Learning, now constitute a core of artificial intelligence technology, but their theoretical inner mechanisms have yet to be explored.
In particular, there have been few theoretical contributions regarding ``learning'' DNNs, despite practical demands for them, where ``learning'' is, broadly speaking, to minimize the so-called ``loss'' by optimizing a parameter $\theta$ of DNNs.  

Our research aims to establish a well-posed mathematical formulation of the learning.
To achieve this aim, some researchers have brought languages of dynamical systems and differential equations into DNNs, for example, in \cite{Haber17,E2017,sonoda2017double}. 
In short, one can regard a continuum limit of DNNs in their depth as an ODE.
Many researchers have attempted to dissect DNNs through some ODEs, designated as \emph{ODE-Net} throughout the paper.
For more information on these attempts, see the survey in \cref{subsec:background}.
Based on this survey, well-posednesses, such as the existence of a minimizer of loss, have not yet been fully explored in the context of these studies.

Accordingly, our goal in this paper is to prove the existence of a minimizer for learning ODE-Net, formulated as a regularized minimization problem constrained by a continuity equation.

\subsection{Target problems and main results}\label{subsec:mainresults}
First of all, we are going to study the existence of a minimizer of the following \emph{kinetic-regularized} minimization problem:
\begin{problem}[Kinetic regularized learning problem constrained by ODE-Net]\label{prob:kinetic_prob}
Let $\lambda \geq 0$ and $\epsilon >0$ be constants, let $\Y$ be a subset of $\R^d$ and let $v\colon\R^d\times\R^m\to\R^d$ and $\ell\colon\R^d\times\Y\to\R_+$ be continuous. 
Let $\mu_0\in\mathcal{P}_c(\R^d\times\Y)$ be a given training data.
Set 
\begin{align}\label{eq:J_kin} 
J(\mu,\theta)
\coloneqq
\int_{\mathbb{R}^d\times\Y}\ell\dd{\mu_T}
+\int_0^T\int_{\mathbb{R}^d\times\Y}\qty({\frac{\lambda}{2}\abs{v\qty(x,\theta_t)}^{2}}
+\frac{\epsilon}{2}\abs{\theta_t}^2)\dd{\mu_t(x,y)\dd t}
\end{align}
for $\mu\in C(\qty[0,T];(\mathcal{P}_2(\R^d\times\mathcal{Y}),W_2))$ and $\theta\in L^2\qty(0,T;\R^m)$.
Note that $v(\bullet,\theta)\in L^2(\dd{\mu})$ is a vector field on $\R^d$ for $\mu\in\Pc(\R^d)$ and $\theta\in\R^m$.
The learning problem constrained by ODE-Net is posed as the following constrained minimization problem:
\begin{gather} 
    \notag 
    \inf \Set{ J(\mu,\theta) | \mu\in C\qty(\qty[0,T];(\mathcal{P}_2(\R^d\times\mathcal{Y}),W_2)),\; \theta\in L^2\qty(0,T;\R^m)  }, 
    \intertext{subject to } 
    \left\{
\begin{aligned}
        \partial_t\mu_t+\Div_x(\mu_t(x,y)v(x,\theta_t))&=0,\ \qty(x,y)\in \R^d\times\Y,\ t\in (0,T),\\
    \left.\mu_t\right|_{t = 0}&=\mu_0,       
\end{aligned}
\right.\label{eq:ODE2}
\end{gather}
where $\mathcal{P}_c(\R^d\times\mathcal{Y})$ denotes the set of regular and Borel probability measures compactly supported on $\R^d\times\mathcal{Y}$, $(\mathcal{P}_2(\R^d\times\mathcal{Y}),W_2)$ denotes the ($L^2$-)Wasserstein space defined in \cref{subsec:Wass}, $C(\qty[0,T];(\mathcal{P}(\R^d\times\mathcal{Y}),W_2))$ denotes the set of curves which is continuous with respect to the Wasserstein topology (see also \cref{def:measure}), and \[\mu\in C([0,T]; (\mathcal{P}_2(\R^d\times \mathcal{Y}),W_2))\] is supposed to solve the equation~\eqref{eq:ODE2} in the distributional sense of~\cref{def:wk_sol}.
\end{problem}

\begin{remark}
In \cref{prob:kinetic_prob}, the ODE-Net corresponds to the continuity equation \eqref{eq:ODE2} with a parameter $\theta_t$, and the learning to the minimization of a functional $J$ with respect to a parameter $\theta_t$ and a solution $\mu_t$ to ODE~\eqref{eq:ODE2}.
\end{remark}

The first term in~\eqref{eq:J_kin} measures the so-called \emph{loss}.
The second term in \eqref{eq:J_kin} is called a ``kinetic regularization'' in \cite{pmlr-v119-finlay20a} because it represents the kinetic energy when $v(\bullet,\theta)\  (\theta\in\R^m)$ is regarded as a velocity field on $\R^d$. 
By letting this kinetic energy be as small as possible, we could control the velocity field so that the support of the solution $\mu_t$ to~\eqref{eq:ODE2} does not change wildly. 
The third term is often called an $L^2$-regularization, which is familiar with the well-known Ridge regression.

In order to prove existence of a minimizer for \cref{prob:kinetic_prob}, we shall impose the following assumptions on $\Y$, $\ell$ and $v$ :

\begin{assumption}\label{assump:label_loss}
The label set $\mathcal{Y}\subset\R^d$ is compact, and the loss function $\ell\colon\R^d\times\Y\to\R_{+}$ is a continuous function of $2$-growth, see also \cref{def:measure}.
\end{assumption}

In addition, following the previous works on ODE-Net \cite{pmlr-v139-sander21a,sander2022do,Scagliotti22,barboni2021global,yang2023tensor}, we impose the assumption below that the neural network $v(x,\theta)$ is linear with respect to $\theta$, but not necessarily linear with respect to $x$.

\begin{assumption}\label{assump:linear_NN}
The neural network $v$ in \eqref{eq:ODE2} is linear with respect to $\theta$, i.e., the parameter $\theta$ is a $d\times p$ matrix and $v$ satisfies 
\begin{equation}
    v(x,\theta)=\theta f(x),\label{eq:linear_NN}
\end{equation}
where $f\colon\R^d\to\R^p$ is a Lipschitz continuous function.
\end{assumption}

\cref{assump:linear_NN} is not a serious restriction. 
In fact, \cite[Theorem 1]{vialard20shooting} shows that for a neural network $v$ that is nonlinear with respect to $\theta$, there exists another neural network that is linear with respect to $\theta$ and can approximate the solution $\mu$ of ODE-Net \eqref{eq:ODE2}.
Thus, \cref{assump:linear_NN} is not so restrictive in discussing the existence of the minimizer for \cref{prob:kinetic_prob}.
Rather, \cref{assump:linear_NN} can address neural networks unbounded with respect to parameters $\theta$, which commonly appear in modern DNNs.
In contrast, previous theoretical works often assume a bounded neural network, details of which will be given in \cref{subsec:background} below.

Under these assumptions, we obtain one of our main results in the present paper.
\begin{theorem}[Existence of a minimizer]\label{thm:existence_of_kineticODE_Net}
Under \cref{assump:label_loss,assump:linear_NN}, there exists a minimizer $(\mu,\theta)\in C([0,T];(\mathcal{P}_{2}(\R^d\times\Y),W_2))\times L^2\qty(0,T;\R^m)$ for \cref{prob:kinetic_prob}.
\end{theorem}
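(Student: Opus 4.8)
The plan is to apply the direct method of the Calculus of Variations. First I would note that $J\ge 0$, since $\ell\ge 0$ and the running integrand is nonnegative, so the infimum $I:=\inf J$ is finite; moreover, because $\abs{\theta_t}^2$ does not depend on $(x,y)$ and each $\mu_t$ is a probability measure, the $L^2$-regularization reduces to $\tfrac{\epsilon}{2}\norm{\theta}_{L^2(0,T;\R^m)}^2$. Fix a minimizing sequence $(\mu^n,\theta^n)$ with $J(\mu^n,\theta^n)\le I+1/n$. The $\epsilon$-term then forces a uniform bound on $\norm{\theta^n}_{L^2}$, so after extracting a subsequence $\theta^n\rightharpoonup\theta$ weakly in $L^2(0,T;\R^m)$.

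For the measure component I would exploit that, under \cref{assump:linear_NN}, the velocity $x\mapsto v(x,\theta_t)=\theta_t f(x)$ is Lipschitz in $x$ with $L^1$-in-time Lipschitz constant $\Lip(f)\abs{\theta_t}$; hence each admissible $\mu^n$ is the push-forward of $\mu_0$ along the unique characteristic flow. A Grönwall estimate using $\abs{f(x)}\le\abs{f(0)}+\Lip(f)\abs{x}$ and $\int_0^T\abs{\theta^n_t}\dd t\le \sqrt{T}\,\norm{\theta^n}_{L^2}$ bounds the flow, so every $\supp\mu^n_t$ lies in a single compact set $K=\overline{B_{R'}}\times\Y$, uniformly in $n$ and $t$. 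This gives uniform second moments, and from \eqref{eq:ODE2} one has $W_2(\mu^n_s,\mu^n_t)\le \int_s^t\norm{v(\cdot,\theta^n_r)}_{L^2(\mu^n_r)}\dd r\le \big(\sup_K\abs{f}\big)\norm{\theta^n}_{L^2}\sqrt{t-s}$, i.e. uniform H\"older-$\tfrac12$ equicontinuity. Since $(\Pcal_2(K),W_2)$ is compact, Arzel\`a--Ascoli yields a further subsequence with $\mu^n\to\mu$ uniformly in $W_2$; as $\mu^n_0=\mu_0$ we recover $\mu_0$ as the initial datum.

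The crux is passing to the limit in the constraint, and here I would use the linearity in $\theta$ decisively. Testing \eqref{eq:ODE2} against $\phi\in C_c^\infty([0,T)\times\R^d\times\Y)$, the only $\theta$-dependent term is $\int_0^T\!\intRdY \nabla_x\phi\cdot\theta^n_t f(x)\dd{\mu^n_t}\dd t=\int_0^T \theta^n_t : A^n_t\dd t$ (with $:$ the Frobenius inner product), where $A^n_t:=\intRdY (\nabla_x\phi)\, f(x)^{\top}\dd{\mu^n_t}$. Because $(\nabla_x\phi)f^{\top}$ is continuous and bounded on $K$ while $\sup_t W_2(\mu^n_t,\mu_t)\to0$, the matrices $A^n\to A$ strongly in $L^2(0,T)$; paired with $\theta^n\rightharpoonup\theta$ this is a weak--strong product, so the term converges to $\int_0^T\theta_t:A_t\dd t$. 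The remaining terms pass to the limit by the uniform $W_2$-convergence, so $(\mu,\theta)$ is admissible for \cref{prob:kinetic_prob}.

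It remains to show $J(\mu,\theta)\le\liminf_n J(\mu^n,\theta^n)=I$. The terminal loss $\intRdY\ell\dd{\mu^n_T}\to\intRdY\ell\dd{\mu_T}$ by narrow convergence of $\mu^n_T$ tested against $\ell$, which is continuous and bounded on $K$ (\cref{assump:label_loss}); the $L^2$-term is weakly lower semicontinuous. For the kinetic term I would again use linearity: writing $\intRdY\abs{\theta_t f(x)}^2\dd{\mu_t}=\Tracee\!\big(\theta_t M^n_t\theta_t^{\top}\big)$ with the positive semidefinite matrix $M^n_t:=\intRdY f f^{\top}\dd{\mu^n_t}$, one has $M^n\to M$ uniformly in $t$ (the Lipschitz map $ff^\top$ against $W_2$-convergence). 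Splitting pointwise in $t$, $\Tracee(\theta^n M^n(\theta^n)^{\top})=\Tracee(\theta^n M(\theta^n)^{\top})+\Tracee(\theta^n(M^n-M)(\theta^n)^{\top})$, the last integral vanishes since $\norm{M^n-M}_\infty\to0$ and $\norm{\theta^n}_{L^2}$ is bounded, whereas $\theta\mapsto\int_0^T\Tracee(\theta_t M_t\theta_t^{\top})\dd t$ is convex (as $M\succeq0$) and strongly continuous on $L^2$, hence weakly lower semicontinuous. Combining these yields $J(\mu,\theta)\le I$, so $(\mu,\theta)$ is the desired minimizer. The main difficulty is precisely the two places where products of the merely weakly converging $\theta^n$ with $\mu^n$-dependent quantities appear: the bilinear flux in the constraint and the quadratic kinetic energy. \cref{assump:linear_NN} is exactly what turns these into a weak--strong pairing and a convex functional, respectively; a field nonlinear in $\theta$ would instead require strong compactness of $\theta^n$, which the coercivity alone does not supply.
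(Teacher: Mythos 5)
Your proposal is correct and follows essentially the same route as the paper: the direct method with uniform compact support of $\supp\mu^n_t$ via characteristics and Gr\"onwall, $W_2$-equicontinuity plus Ascoli--Arzel\`a for strong convergence of $\mu^n$, weak $L^2$-convergence of $\theta^n$, the bilinear weak--strong pairing $\int_0^T\theta^n_t:A^n_t\,\dd t$ to pass to the limit in the continuity equation, and weak lower semicontinuity of the convex quadratic form $\theta\mapsto\int_0^T\Tracee(\theta_t M_t\theta_t^{\top})\dd t$ for the kinetic term (the paper packages the latter as an equivalent norm $\normiii{\cdot}_{L^2(0,T;\R^m)}$, and your derivation of equicontinuity from the $\epsilon$-bound together with the uniform support bound is exactly the variant the paper records in its remark on the case $\lambda=0$). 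No gaps.
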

\noindent
It should be noted that by virtue of this theorem, one can assume that the deep learning model as in \cref{prob:kinetic_prob} with \cref{assump:label_loss,assump:linear_NN} is well-defined so that we can pursue the mathematical analysis of the learning of ODE-Nets. We also remark here that the uniqueness of such minimizers cannot be generally expected since the problem is over-determined with a large degree of freedom in $\theta$. We will also mention the uniqueness in \cref{rmk:unique} below.

We note that 
\cref{assump:linear_NN} does not hold for all neural networks.
For example, two-layer ReLU networks $v(x,\theta)=A(Bx)_+$, $\theta=(A,B)$, $A,B\in\R^{d\times d}$, are not linear with respect to $\theta$. This network is quite commonly used, not only in ODE-Net but also in the so-called ResNet, as illustrated in \cite[Figure 2]{resnet}.

In order to provide existence results for these cases as well, we shall consider an ideal or relaxed version of \cref{prob:kinetic_prob}.
To this end, we shall employ the universal approximation theorem by Cybenko~\cite{Cybenko1989} or the Kolmogorov--Arnol'd representation theorem shown by \cite{Kolmogorov57,Arnold57,Sprecher65}; they insist that neural networks $v$ can approximate or represent arbitrary vector fields. Those theorems inspire that the ODE-Net is no longer parametrized by $\theta$, i.e., the ODE-Net is just driven by a family of vector fields $(v_t)_{t\in[0,T]}$.
From this perspective, our ideal setting for the learning reads:

\begin{problem}[Ideal learning problem]\label{prob:ideal_prob}
Let $\lambda > 0$ be a strictly positive constant, let $\ell\colon\R^d\times\Y\to\R_+$ be continuous, and let $\mu_0\in\Pc(\R^d\times\Y)$ be a given input data.
Set 
\begin{align}
\widehat{J}(\mu,v)
\coloneqq
\int_{\mathbb{R}^d\times\Y}\ell\dd{\mu_T}
+\int_0^T\int_{\mathbb{R}^d\times\Y} {\frac{\lambda}{2}\abs{v\qty(x,t)}^2}\dd{\mu_t(x,y)\dd t} \label{eq:J_ideal}
\end{align}
for $\mu\in C(\qty[0,T];(\mathcal{P}_2(\R^d\times\mathcal{Y}),W_2))$ and $v\in L^2\qty(\dd{\mu_t\dd t})$, 
where $v\in L^2\qty(\dd{\mu_t\dd t})$ means that the squared integral of $v(x,t)$ in the measure $\dd \mu_t(x)$ over $\R^d$ is integrable in time $t$ over $ [0,T]$.
Then an \emph{ideal} learning problem constrained by ODE-Net is posed as the following constrained minimization problem:
\begin{gather}
\inf \Set{
\widehat{J}(\mu,v) | 
\mu\in C(\qty[0,T];(\mathcal{P}_2(\R^d\times\mathcal{Y}),W_2)),\; 
v\in L^2\qty(\dd{\mu_t\dd t})
}\nonumber,
\intertext{subject to}
\left\{
\begin{aligned}
    \partial_{t} \mu_{t}+\Div_x\left(v_t\mu_{t}\right)&=0\text{ (in the sense of \cref{def:wk_sol})},\\ 
    \left.\mu_{t}\right|_{t=0}&=\mu_{0}.
\end{aligned}
\right.\label{eq:ODE3}
\end{gather}
\end{problem}
\noindent
In contrast to \cref{prob:kinetic_prob}, where the parameter $\theta$ is a variable to the functional $J$, the vector field $v$ itself is a variable to the functional $\widehat{J}$ in \cref{prob:ideal_prob}.
For this idealized problem containing a broader class of vector fields, we also establish the existence of a minimizer as in the following theorem:

\begin{theorem}[Existence of a minimizer]\label{thm:ideal_prob}
Under \cref{assump:label_loss}, there exists a minimizer $\mu\in C([0,T];(\mathcal{P}_{2}(\R^d\times\Y),W_2))$ and $v\in L^2(\dd\mu_t\dd t)$ for \cref{prob:ideal_prob}.
\end{theorem}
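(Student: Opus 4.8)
The plan is to run the direct method of the Calculus of Variations, but since the bilinear identity~\eqref{eq:product} exploited in \cref{thm:existence_of_kineticODE_Net} is unavailable here, I would linearise the constraint by introducing the \emph{momentum} $E_t\coloneqq v_t\mu_t$. The continuity equation~\eqref{eq:ODE3} then reads $\partial_t\mu_t+\Div_x E_t=0$, which is \emph{linear} in the pair $(\mu,E)$, while the kinetic energy $\int_0^T\!\int_{\R^d\times\Y}\frac\lambda2\abs{v_t}^2\dd{\mu_t}\dd t$ becomes a jointly convex functional of $(\mu,E)$. I first note that the infimum is finite and nonnegative: the admissible choice $v\equiv0$, $\mu_t\equiv\mu_0$ yields $\widehat{J}=\int_{\R^d\times\Y}\ell\dd{\mu_0}<\infty$, using that $\mu_0$ is compactly supported and $\ell$ is continuous. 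Fix a minimizing sequence $(\mu^n,v^n)$ and put $E^n_t\coloneqq v^n_t\mu^n_t$.

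Next I would derive the a priori estimates that furnish compactness. The uniform bound on $\widehat{J}(\mu^n,v^n)$ gives $\int_0^T\!\int_{\R^d\times\Y}\abs{v^n_t}^2\dd{\mu^n_t}\dd t\le M$. Since the $W_2$-metric derivative of $t\mapsto\mu^n_t$ is controlled by $\|v^n_t\|_{L^2(\mu^n_t)}$, Cauchy--Schwarz yields the uniform Hölder bound $W_2(\mu^n_s,\mu^n_t)\le M^{1/2}\abs{t-s}^{1/2}$; combined with $\mu^n_0=\mu_0$ this bounds $\int_{\R^d\times\Y}\abs{x}^2\dd{\mu^n_t}$ uniformly in $n$ and $t$, so the family $\{\mu^n_t\}$ is tight and equicontinuous in $(\mathcal{P}_2,W_2)$. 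A refined Arzelà--Ascoli argument then extracts a subsequence with $\mu^n_t\to\mu_t$ narrowly, uniformly in $t$. For the momenta, $\abs{E^n_t}(\R^d\times\Y)\le\|v^n_t\|_{L^2(\mu^n_t)}$ gives a uniform total-variation bound on $[0,T]\times\R^d\times\Y$, so after a further extraction $E^n\weakast E$ as $\R^d$-valued measures.

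The crux is the lower semicontinuity of the kinetic term. Passing to the limit in the linear weak form $\int_0^T\!\int(\partial_t\zeta\dd{\mu^n_t}+\nabla_x\zeta\cdot\dd{E^n_t})\dd t=0$, valid for $\zeta\in\Ccinf$, is immediate from the two convergences, so $(\mu,E)$ solves $\partial_t\mu_t+\Div_x E_t=0$ with $\mu|_{t=0}=\mu_0$ in the sense of \cref{def:wk_sol}. I then invoke the lower semicontinuity of the Benamou--Brenier functional $(\mu,E)\mapsto\int\phi\qty(\dd E/\dd\lambda,\dd\mu/\dd\lambda)\dd\lambda$, whose integrand $\phi(e,m)=\abs{e}^2/m$ (for $m>0$) is convex, lower semicontinuous and positively $1$-homogeneous; for such integrands the functional is sequentially lower semicontinuous under weak-$*$ convergence of $(\mu,E)$, and this convexity is exactly what replaces the bilinear argument used for \cref{thm:existence_of_kineticODE_Net}. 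Finiteness of the functional at the limit forces $E\ll\mu$; setting $v\coloneqq\dd E/\dd\mu$ produces an admissible velocity with $\int_0^T\!\int\abs{v}^2\dd{\mu_t}\dd t<\infty$, i.e. $v\in L^2(\dd{\mu_t}\dd t)$, and the same finiteness makes $t\mapsto\mu_t$ absolutely continuous in $W_2$, so $\mu\in C([0,T];(\mathcal{P}_2(\R^d\times\Y),W_2))$.

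Finally, for the loss term, the narrow convergence $\mu^n_T\to\mu_T$ together with $\ell\ge0$ continuous gives, by the portmanteau lemma, $\int_{\R^d\times\Y}\ell\dd{\mu_T}\le\liminf_n\int_{\R^d\times\Y}\ell\dd{\mu^n_T}$. Adding this to the lower semicontinuity of the kinetic term yields $\widehat{J}(\mu,v)\le\liminf_n\widehat{J}(\mu^n,v^n)=\inf$, so $(\mu,v)$ is a minimizer. I expect the main obstacle to be the lower semicontinuity of the kinetic energy under the weak convergences, which the momentum formulation resolves through convexity of $\phi$; a secondary subtlety is that only narrow---not $W_2$---convergence of $\mu^n_T$ is available, but this suffices precisely because $\ell$ is nonnegative and lower semicontinuous.
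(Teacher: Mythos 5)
Your proof is essentially correct, but it follows a genuinely different route from the paper. You stay entirely Eulerian: you convexify the problem by passing to the momentum variable $E_t=v_t\mu_t$, so that the constraint becomes linear in $(\mu,E)$ and the kinetic energy becomes the Benamou--Brenier functional with the convex, positively $1$-homogeneous integrand $\phi(e,m)=\abs{e}^2/m$, whose sequential weak-$*$ lower semicontinuity (together with the implication $E\ll\mu$ on the finiteness set) does all the work that bilinearity did in \cref{thm:existence_of_kineticODE_Net}. The paper instead goes Lagrangian: it reformulates \cref{prob:ideal_prob} as \cref{prob:super_ideal_prob} over measures $Q\in\mathcal{P}(\Y\times C([0,T];\R^d))$ on path space with $(\mathsf{e}_0)_\#Q=\mu_0$, proves existence there (\cref{thm:super_existence}) using tightness from the coercive action functional $A$ of \eqref{eq:defA} and Prokhorov, and then translates back via the superposition principle (\cref{prop:prob_rep}) and Lisini's identity $\widetilde{J}(Q^\ast)=\widehat{J}(\mu^{Q^\ast},\tilde v)$. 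Your approach buys a self-contained argument in the original variables at the price of the two technical ingredients you correctly flag: the lower semicontinuity theorem for convex $1$-homogeneous functionals of measure pairs, and a disintegration-in-time step to recover $E=E_t\otimes\dd t$ with $E_t\ll\mu_t$ (which does follow from finiteness of the limiting kinetic energy, since then $\abs{E}\ll\mu=\mu_t\otimes\dd t$). The paper's route avoids both by making the kinetic energy a single lower semicontinuous functional on path space with compact sublevel sets, at the price of invoking the superposition principle and the fact that the barycentric velocity $\tilde v_t(x)=\int\dot\gamma(t)\dd{Q_x(\gamma)}$ does not increase (and at the minimizer attains) the energy. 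Both arguments are sound; yours is closer to the classical proof of the Benamou--Brenier formula itself, while the paper's is closer to variational mean-field games.
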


We have been discussing the well-posedness of ``learning'' of ODE-Net by formulating it via a \emph{mean-field} optimal control problem, in the sense that we have to control trajectories in the space of probability measures $\mu_t$. Through the above discussion, it is suggested that such a learning framework successfully gives a mathematical way to analyze the learning processes of DNNs.
Our main results obtained in this analysis are interesting from the viewpoint of the Calculus of Variations in that minimizers exist for nonlinear optimal control problems such as \cref{prob:kinetic_prob,prob:ideal_prob}. 
Moreover, the proofs of our theorems will ensure that every minimizing sequence contains a convergent subsequence in a suitable topology, leading to the well-posedness of sequential minimization algorithms such as Gradient Descent (GD).

\subsection{Contributions of the paper}\label{subsec:aim}
The present paper contributes to establishing the existence of a minimizer under situations where the regularization parameter $\lambda$ is not necessarily large in \cref{thm:existence_of_kineticODE_Net}.
This situation can be addressed because, in contrast to the paper \cite{bonnet2022measure}, we use an argument that does not rely on a strong convexity of $J$ to prove the existence of a minimizer.
In addition, this theorem can apply to unbounded and non-differentiable neural networks $v(x,\theta)$, which are important targets in applications.

As a comparison, a key to our convergence results is to obtain the existence of a minimizer of both $\mu$ and $\theta$ under reasonable assumptions.
The authors in \cite{bonnet2022measure} required strong convexity of $J$, or a sufficiently large parameter $\lambda$, in order to obtain strong compactness.
In addition, Thorpe and Gennip~\cite{thorpe2020deep} and  Esteve et al.~\cite{esteve2021largetime} obtained existence results under an $H^1$-regularization of $\theta_t$, and Herty et al.~\cite{herty2022large} under boundedness for the Lipschitz constant of $\theta\colon[0,T]\to\R^m$; broadly speaking, both of them are assuming that the ``differentials'' of the parameters $\theta_t$ in time are controlled.
While these studies are novel in that they do not impose assumptions on regularization parameters such as $\lambda$, the assumptions of the continuity or differentiability on parameters $\theta\colon[0,T]\to\R^m$ should be relaxed or removed because the functions that ODE-Net can approximate are limited and the expected value of $\ell$ cannot be sufficiently small.
Furthermore, in the field of ensemble optimal control, an existence result in the $L^2$-setting using ODE similar to \eqref{eq:ODE-Net1} is proved by Scagliotti in \cite[Theorem 3.2]{Scagliotti23}. 
In \cite{Pogodaev2016}, Pogodaev proved the existence of optimal control of the continuity equation with parameters $\theta$ relaxed to Young measures on a bounded domain.
As a corollary, the existence of optimal parameters follows if the neural network satisfies certain convexity conditions, but the continuity of an optimal curve $\mu^\ast$ with respect to $t$ is not clear.

\cref{thm:existence_of_kineticODE_Net} also provides one theoretical justification for the experimental algorithm in \cite{pmlr-v119-finlay20a}.
The authors developed an algorithm to approximate the minimizer of the Benamou--Brenier type problem, which is guaranteed to exist.
However, the guarantee does not hold for the algorithm because the vector field $v$ in the continuity equation is constrained by the neural network $v_\theta$.
This study supplements the existence of a minimizer, even in this case.

In \cref{sec:ideal}, we combine the neural network property of universal approximation with the training of ODE-Net in \cref{prob:ideal_prob}. This new combination makes it possible to obtain existence results (\cref{thm:ideal_prob}) without the linearity assumption (\cref{assump:linear_NN}).
It is also interesting that \cref{prob:ideal_prob} has a similar formulation to (variational) Mean-Field Game (MFG)~\cite{Lasry2007,Benamou2017,Santambrogio2020}.
This similarity between Deep Learning and MFG has recently been pointed out by E et al.~\cite{E2018} and Ruthotto et al.~\cite{ruthotto2020machine}. 
Our results are expected to suggest a strong connection between MFG and ODE-Net.
In fact, for the proof of \cref{thm:ideal_prob}, we will give an auxiliary theorem (\cref{thm:super_existence}) that is proved via the so-called Lagrange perspective for easy handling of the vector fields $v$ (see also \cite[Subsection 2.2.2]{Santambrogio2020}).

\subsection{Organization of the paper}
This paper is organized as follows.
In \cref{subsec:background}, we will give a brief review of previous studies on ODE-Net. In the first half, we summarize the history of the development of ODE-Net, and in the second half, we review mathematical formulations of the learning of ODE-Net.
In \cref{sec:prelim}, we will provide preliminary facts on the convergence of probability measures and distributional solutions of the continuity equation, which will be used to set up and prove our main results.
In \cref{sec:kin}, we will prove \cref{thm:existence_of_kineticODE_Net}. By virtue of the regularization term in \eqref{eq:J_kin} and the Benamou--Brenier formula in \cref{lem:BenamouBrenier}, we will easily get the appropriate compactness of minimizing sequences.
Hence, we can apply the direct method of the Calculus of Variations to reach the existence results.
In \cref{sec:ideal}, we will exhibit how \cref{prob:ideal_prob} is formulated through an idealization in a detailed manner, and then we prove our main result (\cref{thm:ideal_prob}). One cannot prove the theorem by simply applying the arguments used in \cref{sec:kin}.
Instead, we show the theorem by the use of a supplementary problem (see \cref{prob:super_ideal_prob,thm:super_existence}) based on the Lagrange perspective.
\cref{sec:conclusion} presents a summary of the paper and discusses some tasks to be undertaken in future studies.
In \cref{appendix:H^1,appendix:convexity}, we show and review the existence results for problems 
given by Bonnet et al.~\cite{bonnet2022measure} and Thorpe and Gennip~\cite{thorpe2020deep}. These problems adopt different regularization terms from \cref{prob:kinetic_prob}. 
By comparing the proofs of \cref{thm:existence_of_kineticODE_Net,thm:existence_of_H^1,thm:existence_conv}, one can observe differences in how compactness is obtained to minimizing sequences.

\section{Background and related works}\label{subsec:background}
This section provides an overview of previous research on learning of ODE-Net. \cref{subsec:origin} reviews how ODE-Net has been proposed. \cref{subsec:formulations} describes how the learning has been formulated and discussed.

\subsection{Background to the development of ODE-Net}\label{subsec:origin}
Before describing the history of the development of ODE-Net, we shall review a type of DNN called ResNet that led to the improvement of DNN's performance.
ResNet was devised to facilitate optimization of DNN in \cite{resnet}. The simplest $L$-layer ResNet consists of the difference equation
\begin{equation}
    \begin{aligned}
    x_{0}&=g(x,\theta),\\
    x_{t+1}&=x_t+v(x_t,\theta_t),\quad t=0,\dots,L-1,\\
    y      &=h(x_L,\theta_L),
\end{aligned}
\label{eq:difference_eq}
\end{equation}
where $x\in\R^d$ is an input data, $y\in\Y$ denotes a final output, and $g(\bullet,\theta)\colon\R^d\to\R^{d_0}$ and $h(\bullet,\theta_L)\colon\R^{d_L}\to\Y\subset\R^{d_{\Y}}$ are some linear maps with parameters $\theta\in\R^{d_0\times d}$ and $\theta_L\in\R^{d_{\Y}\times d_L}$ respectively. In addition, $v(\bullet,\theta_t)\colon\R^{d_t}\to\R^{d_{t+1}}$ is multiple compositions of some affine maps with $\theta_t$, and nonlinear functions, called activation functions, such as Rectified Linear Unit (ReLU)~\cite{Nair10ReLU}.
Out of various models of (Deep) Neural Networks, we shall refer to the above mapping $v(\bullet, \theta_t)$ associated with ResNet as a neural network simply in this paper.
Experimentally, ResNet is known to perform better than other DNNs. In particular, \emph{deep} ResNet, i.e., \eqref{eq:difference_eq} with $L\gg1$ outperforms other Machine Learning methods.

When ResNet is very deep, it is natural to observe ResNet~\eqref{eq:difference_eq} as the explicit Euler discretization of an ODE with unit step size. 
With the pioneering works in \cite{Haber17,E2017,sonoda2017double}, a trend started to analyze DNNs and develop algorithms by replacing ``discrete'' DNNs with ``continuum'' ODEs.
 For example, Haber et al.~\cite{Haber17} employed the linear stability analysis in the theory of dynamical systems to stabilize ResNet, and Lorin et al.~\cite{Lorin2020} utilized the parallel computing for differential equations to speed up the training of ResNet.
These ``continuum'' ODEs corresponding to DNNs are often called Neural ODE in \cite{ChenRBD18}, or ODE-Net in \cite{Zhong2020Symplectic,NEURIPS2021_b5d62aa6}. 
Specifically, the following parameterized dynamical system is often called \emph{ODE-Net}:
\begin{equation}
    \begin{aligned}
    x_{0}&=g(x,\theta),\\
    \dot{x}_t&=v(x_t,\theta_t),\quad t\in(0,T),\\
    y      &=h(x_T,\theta_T),
\end{aligned}
\label{eq:ode}
\end{equation}
where $x\in\R^d$, $y\in\Y$, $g\colon\R^d\times\R^{d\times d}\to\R^d$, $h\colon\R^d\times\R^{d_{\Y}\times d}\to\Y$ and $v$ are defined as in \eqref{eq:difference_eq}. Note that for simplicity, it is assumed that $x_t\in \R^d$ for any $t\in[0,T]$, and accordingly, the neural network $v(\bullet,\theta_t)$ becomes a vector field on $\R^d$. Also, the finite-dimensional parameters $\theta_0,\theta_1,\dots,$ and $\theta_{L-1}$ in \eqref{eq:difference_eq} are replaced with a (measurable) function on $[0,T]$. While $\theta\colon[0,T]\to\R^m$ is sometimes supposed to be continuous for theoretical reasons, the function $\theta$ on $[0,T]$ can be discontinuous during the learning process as seen in \cite[Figure 2]{dissecting} and \cite[Figure 1]{baravdish2022learningCG}. Thus, we impose the Lebesgue integrability condition on $\theta$ in our setting. The terminal time $T>0$ is an arbitrary given constant.

Although there is not so much mathematical research on ODE-Net, the basic properties of general DNNs have also been studied for ODE-Net.
For example, ODE-Net has universal approximation properties 
 proved by \cite{teshima2020universal} and that the objective functional $J$ has no local minima shown in \cite{lu20b,ding22overparam,ding21on}. It is also known that specific additional assumptions (e.g., continuity of $\theta\colon[0,T]\to\R^m$) are necessary to regard ResNet as the discretization of ODE-Net (see, e.g., \cite{sander2022do,jabir2021meanfieldneural,thorpe2020deep}) and to guarantee the convergence of learning algorithms in \cite{jabir2021meanfieldneural}. 
 
 \subsection{Formulations of the learning of ODE-Net and existence results}\label{subsec:formulations}
Practically, people want ODE-Net to output a desired $y$ for an input $x$.
For this purpose, \emph{ODE-Net needs to learn}, i.e., we optimize the parameter $\theta$ in ODE-Net \eqref{eq:ode}.
Thus, it is necessary to establish a theory of the learning of ODE-Net.
E et al.\ were the first to attempt a general formulation of the learning of ODE-Net \eqref{eq:ode} in \cite{E2017,E2018}.
They modeled the learning as a mean-field optimal control problem as follows:

\begin{problem}[Learning problem constrained by ODE-Net~{\cite[Equation 3]{E2018}}]\label{prob:ODE_Net_E}
Let $\Y=\R^l$, let $\Theta$ be a subset of $\R^m$ and let $v\colon\R^d\times\R^m\to\R^d$, $\ell\colon\R^d\times\Y\to\R_+$ and $L\colon\R^d\times\R^m\to\R_+$ be continuous. 
For a given input data $\mu_0\in\mathcal{P}_c(\R^d\times\Y)$, the learning problem constrained by ODE-Net is posed as the following constrained minimization problem:
\begin{align}
    &\minimize_{\subalign{&\theta\in L^\infty\qty(0,T;\Theta)}} {\mathbb{E}\qty[\ell(x_T,y)+\int_0^T L(x_t,\theta_t)\dd{t}]},
    \label{eq:objective1}\\
    &\text{subject to }
    \left\{
    \begin{aligned}
    &\dot{x}_t=v(x_t,\theta_t), t\in(0,T),\\
    &(x_0,y)\sim\mu_{0}.
    \end{aligned}
    \label{eq:ODE-Net1}
    \right.
\end{align}
\end{problem}
\noindent
The meanings of symbols appearing in \cref{prob:ODE_Net_E} are as follows. The given probability measure $\mu_0$ is called training data; a probability distribution of input-output pairs of a random variable $(x,y)$ in \eqref{eq:ode} used for the learning.
The vector field $v(\bullet,\theta)$, $\theta\in\R^m$, on $\R^d$ represents the neural network explained in \eqref{eq:ode}.
After expanded by the linearity of the expected values, the first term of \eqref{eq:objective1} represents the expected value of a loss function $\ell(x,y)$, which is the target we want to make as small as possible during the learning process. One often uses the squared loss $\ell(x,y)=\abs{x-y}^2/2$ for regression problems or the cross-entropy for classification problems (see, e.g., \href{https://pytorch.org/docs/stable/generated/torch.nn.CrossEntropyLoss.html}{Pytorch's document} for the specific form). 
However, when using a neural network with many parameters, minimizing only the loss $\mathbb{E}\qty[\ell(x_T,y)]$ can lead to the so-called overfitting; see basic statistics and machine learning textbooks, e.g., \cite[Subsection 1.4.7]{murphy2013machine}.
To avoid this overfitting, we also minimize the second expected value, which is called a regularization term. For example, some researchers use the $L^2$-regularization $L(x,\theta)=\lambda\abs{\theta}^2/2$, $L^1$-regularization $L(x,\theta)=\lambda\abs{\theta}$, and entropy regularization used in \cite{jabir2021meanfieldneural,pmlr-v70-haarnoja17a}. In addition, the kinetic regularization $L(x,\theta)=\lambda\abs{v(x,\theta)}^2/2$ that Finlay et al.\ proposed with the help of the Benamou--Brenier formula in \cite{pmlr-v119-finlay20a} can make the trajectories of ODEs' solutions well-behaved.
Another way to deal with the overfitting is to restrict $\Theta\subset\R^m$ to compact sets. 
In Optimal Control Theory, by virtue of the compactness of $\Theta$, one can easily show the existence of optimal parameters (see, e.g., \cite[Theorem 5.1.1]{bressan2007}).
It should be noted that these various regularizations require an assumption upon a function space to which the parameters $\theta$ belong. 
As is seen in the above \cref{prob:ODE_Net_E}, E et al.\ set the function space to $L^\infty$-space in \cite{E2018}.

\begin{remark}[On neglecting input and output transformations in~\eqref{eq:ode}]\label{rmk:in_out}
ODE-Net introduced in \eqref{eq:ode} contains input and output transformations $g$ and $h$, leading to a learning problem corresponding to a minimization with respect to $\theta\in\R^{d\times d}$, $\theta_\bullet\in L^2(0,T;\R^m)$ and $\theta_L\in\R^{d_\Y\times d}$. However, current theoretical studies of ODE-Net often use formulations that ignore $g(x,\theta)$ and $h(x,\theta_L)$, and consider minimization only in $\theta_t$ as in \cref{prob:ODE_Net_E}.
In the author's view, the reason for this neglect is that the existence of minimizers for $\theta$ and $\theta_L$ is easy to check if one proposes a variational formulation that considers $g$ and $h$.
For example, if one imposes the $L^2$-regularization $\abs{\theta}^2+\abs{\theta_L}^2$ for the parameters $\theta\in\R^{d\times d}$ and $\theta_L\in\R^{d_\Y\times d}$ associated with $g(\bullet,\theta)\colon x\mapsto x_0$ and $h(\bullet,\theta_L)\colon x_T\mapsto y$ respectively, the existence of minimizers $\theta^\ast\in\R^{d\times d}$ and $\theta_L^\ast\in\R^{d_\Y\times d}$ follows immediately by virtue of the direct method of the Calculus of Variations; a minimizing sequence of $((\theta^n, \theta_L^n))_n$ has a convergent subsequence thanks to the Bolzano--Weierstrass theorem.
Hence, only the ODE $\dot{x}_t=v(x_t,\theta_t)$ in \eqref{eq:ode} is sometimes referred to as ODE-Net.
    On the other hand, $g$ and $h$ should \emph{not} be ignored when we explore the learning process, that is, the dynamics of solving the problem with mathematical optimization methods such as GD. It is reported that singular values of a parameter defining $g$ and $h$ affect the convergence of GD~\cite[Theorem 2]{barboni2021global}.
\end{remark}

On the other hand, for \cref{prob:ODE_Net_E}, Bonnet et al.\ brought a measure-theoretical formulation inspired by mean-field optimal control problems~\cite[Section 1.4]{bonnet2022measure}. 
A trick used in their formulation is that laws $\mu_t$, $t\in(0,T)$, of random variables $(x_t,y)$ subject to \eqref{eq:ODE-Net1} satisfy the following continuity equation:
\begin{equation*}
\left\{
\begin{aligned}
        \partial_t\mu_t+\Div_x(\mu_t(x,y)v(x,\theta_t))&=0,\ \qty(x,y)\in \R^d\times\Y,\ t\in (0,T),\\
    \left.\mu_t\right|_{t = 0}&=\mu_0,       
\end{aligned}
\right.
\end{equation*}
 in the sense of distributions defined in \cref{def:wk_sol}. They utilized this trick to translate \cref{prob:ODE_Net_E} into the following \cref{prob:ODE_Net} in the case of $L(x,\theta)=\lambda\abs{\theta}^2$:
 
\begin{problem}[Measure-theoretical learning problem~{\cite[Equation 1.8]{bonnet2022measure}}]\label{prob:ODE_Net}
Let $\lambda > 0$ be constants and let $v\colon\R^d\times\R^m\to\R^d$ and $\ell\colon\R^d\times\Y\to\R_+$ be continuous. 
For a given input data $\mu_0\in\mathcal{P}_c(\R^d\times\R^d)$, the learning problem constrained by ODE-Net is posed as the following constrained minimization problem: 
\begin{align}
    &\minimize_{\subalign{&\theta\in L^2\qty(0,T;\R^m)}} \qty{\int_{\mathbb{R}^d\times\R^d}\ell(x,y)\dd{\mu_T(x,y)}+{\lambda}\int_0^T\abs{\theta_t}^2\dd{t}},
    \label{eq:objective}\\
    &\text{subject to }
    \left\{
    \begin{aligned}
    \partial_{t} \mu_{t}+\Div_x\left(v\left(x, \theta_{t}\right) \mu_{t}\right)&=0, \quad (x,y)\in\R^d\times\R^d, \ t\in(0,T), \\ 
    \left.\mu_{t}\right|_{t=0}&=\mu_{0}.
    \end{aligned}
    \label{eq:ODE-Net}
    \right.
\end{align}
In addition, $\mu$ belongs to $C_w([0,T];\mathcal{P}_{c}(\R^d\times\R^d))$ which is the space of narrowly continuous curves (see also \cref{def:measure}).
\end{problem}
\noindent
As for \cref{prob:ODE_Net}, Bonnet et al.\ studied the unique existence of a minimizer $\theta^\ast$ under the assumption that $\lambda>0$ is \emph{sufficiently large} and the neural network $v(x,\theta)$ is bounded for $\theta$ 
~\cite[Theorem 3.2]{bonnet2021properties}.
 In practice, however, in order to minimize the loss, the regularization parameter $\lambda$ is usually set to be a \emph{sufficiently small} positive number rather than a large one.

The difficulty in obtaining existence theorems to \cref{prob:ODE_Net} is attributed to the variational formulation. From \eqref{eq:objective} and \eqref{eq:ODE-Net}, we observe that the learning of ODE-Net has the following aspects:
\begin{enumerate}[label=(\roman*),ref=(\roman*)]
    \item the objective functional $J$ in \eqref{eq:objective} is minimized over an infinite-dimensional space $L^2(0,T;\R^m)$, and\label{enum:infinite}
    \item the minimization is constrained by the continuity equation~\eqref{eq:ODE-Net} which is a differential equation on the infinite-dimensional space of probability measures $\mathcal{P}(\R^d\times\Y)$.\label{enum:constrained}
\end{enumerate}
When one tries to show the existence of a minimizer for a variational problem such as \cref{prob:ODE_Net} by using the direct method of the Calculus of Variations, it is difficult to obtain the strong compactness of minimizing sequences due to the infinite dimensionality in \ref{enum:infinite}.
In addition, even if minimizing sequences converge, it is not generally obvious whether limits satisfy the continuity equation~\eqref{eq:ODE-Net} mentioned in \ref{enum:constrained}.

\section{Preliminaries}\label{sec:prelim}
This section presents fundamental mathematical tools.

\subsection{Compactness lemma}\label{sec:meas}
For $T>0$, we denote by $C([0,T];X)$ the set of continuous mappings from $[0,T]$ to a topological space $X$ with the uniform convergence topology.
\begin{lemma}[Ascoli--Arzel\'{a}'s theorem]\label{lem:ascoli}
Let $(X,d)$ be a metric space. Then, a family $\mathcal{F}\subset C([0,T];X)$ is relatively compact in the uniform convergence topology if and only if
\begin{itemize}
    \item for each $t\in [0,T]$, the set $\Set{x\in X|x=f(t)\text{ for some }f\in\mathcal{F}}$ is relatively compact in $X$, and 
    \item $\mathcal{F}$ is equi-continuous.
\end{itemize}
\end{lemma}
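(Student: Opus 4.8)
The plan is to prove both implications of the equivalence, working throughout in the metric space $C([0,T];X)$ equipped with the uniform distance $d_\infty(f,g)\coloneqq\sup_{t\in[0,T]}d(f(t),g(t))$. Because this is a metric space, relative compactness is the same as relative sequential compactness, so in each direction it suffices to argue with sequences.

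For the necessity ($\Rightarrow$), suppose $\mathcal{F}$ is relatively compact, so that $\overline{\mathcal{F}}$ is compact. I would first observe that for each fixed $t$ the evaluation map $e_t\colon C([0,T];X)\to X$, $f\mapsto f(t)$, is $1$-Lipschitz, hence continuous; since the continuous image of a relatively compact set is relatively compact, $\{f(t):f\in\mathcal{F}\}=e_t(\mathcal{F})$ is contained in the compact set $e_t(\overline{\mathcal{F}})$, yielding the first condition. For equicontinuity I would use that the compact set $\overline{\mathcal{F}}$ is totally bounded: cover it by finitely many $d_\infty$-balls of radius $\epsilon/3$ centered at functions $g_1,\dots,g_N$, each of which is uniformly continuous on the compact interval $[0,T]$, and take a common modulus $\delta$. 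A standard $\epsilon/3$ triangle-inequality estimate then bounds $d(f(s),f(t))$ uniformly over $f\in\mathcal{F}$ whenever $\abs{s-t}<\delta$.

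The substance lies in the sufficiency ($\Leftarrow$). Given an arbitrary sequence $(f_n)\subset\mathcal{F}$, I would produce a uniformly convergent subsequence in two stages. First, fixing a countable dense subset $D=\{t_1,t_2,\dots\}\subset[0,T]$, I would run a diagonal extraction: using the relative compactness of $\{f_n(t_k):n\in\mathbb{N}\}$ in $X$ at each $t_k$, extract nested subsequences converging at $t_1$, then at $t_2$, and so on, and pass to the diagonal subsequence $(f_{n_j})$, which converges at every point of $D$. Second, I would upgrade pointwise convergence on $D$ to uniform convergence via equicontinuity: given $\epsilon>0$, choose $\delta$ from equicontinuity for $\epsilon/3$, cover $[0,T]$ by finitely many intervals of length $<\delta$ each containing a point of $D$, and combine convergence at those finitely many points with the equicontinuity estimate to conclude that $(f_{n_j})$ is uniformly Cauchy.

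The one delicate point—and the main obstacle—is that uniform Cauchyness does not by itself produce a limit, since $X$ is only assumed to be a metric space, not a complete one. I would resolve this by noting that for each fixed $t$ the values $f_{n_j}(t)$ lie in the closure of $\{f(t):f\in\mathcal{F}\}$, which is compact and therefore complete; hence the Cauchy sequence $(f_{n_j}(t))$ converges in $X$, defining a pointwise limit $f$. Letting the index tend to infinity in the uniform Cauchy estimate then gives $f_{n_j}\to f$ uniformly, and since a uniform limit of continuous $X$-valued maps is continuous, $f\in C([0,T];X)$. This exhibits the required convergent subsequence, so $\mathcal{F}$ is relatively compact, completing the argument.
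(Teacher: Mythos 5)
Your argument is correct, but it is genuinely different from what the paper does: the paper does not prove \cref{lem:ascoli} at all, it simply cites Kelley's treatment of the Ascoli theorem for uniform spaces, which subsumes the metric case stated here. You instead give a complete, self-contained proof in the metric setting: the necessity via the $1$-Lipschitz evaluation maps and a total-boundedness/$\epsilon/3$ argument, and the sufficiency via a diagonal extraction over a countable dense subset of $[0,T]$ followed by the equicontinuity upgrade to a uniformly Cauchy subsequence. You also correctly identify and resolve the one point where a careless proof would fail, namely that $X$ is not assumed complete, so uniform Cauchyness alone does not yield a limit; your observation that the values $f_{n_j}(t)$ stay in the compact (hence complete) closure of $\Set{f(t)|f\in\mathcal{F}}$ closes that gap, and the passage from pointwise limits plus uniform Cauchyness to uniform convergence of continuous maps is standard. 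What the paper's citation buys is generality (uniform spaces, arbitrary compact parameter domains) at the cost of opacity; what your proof buys is a transparent, elementary argument tailored exactly to the hypotheses used later in \cref{sec:kin}, where the relevant $X$ is a Wasserstein space of compactly supported measures. Both are valid; yours could replace the citation without loss.
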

\begin{proof}
A more general version of the above lemma in the case where $X$ is a uniform space is proved in, e.g., \cite[Chapter 7.17]{Kelley175}
\end{proof}

\subsection{Probability measures and the Wasserstein space}\label{subsec:Wass}
Hereinafter, $\mathcal{P}(X)$ denotes the set of Borel probability measures on a separable metric space $X$.
Here, we review some definitions and lemmas regarding properties and convergence of probability measures, as well as properties of the Wasserstein space.

\begin{definition}\label{def:measure}
Let $p\ge 1$ and let $(X,d)$ be a Polish space, i.e., a complete and separable metric space.
\begin{enumerate}
\item (narrow convergence)\;
A sequence $(\mu^n)$ in $\mathcal{P}(X)$ is said to be narrowly convergent to $\mu\in\mathcal{P}(X)$ as $n\to\infty$ if 
\[
\lim_{n\to\infty}\int_X f\dd{\mu^n}=\int_X f\dd{\mu}\text{ for every function $f\in C_{{b}}(X)$,}
\]
where $C_{{b}}(X)$ is the space of continuous and bounded real functions defined on $X$. A topology induced by the convergence is said to be the narrow topology.

\item (uniformly integrable $p$-moments)\;
A subset $K$ in $\mathcal{P}(X)$ has \emph{uniformly integrable $p$-moments} if 
\[
\adjustlimits\lim_{R\to\infty}\sup_{\mu\in K}\int_{X\setminus B_X(R,\overline{x})}d(x,\overline{x})^p\dd{\mu(x)}=0 \quad\mbox{for some}\; \overline{x}\in X,
\]
where $B_X(R,x)$ is the open ball of radius $R$ and center $x$ in $X$.
\item (finite $p$-th moment)\; 
A probability measure $\mu\in\mathcal{P}(X)$ is said to have the finite $p$-th moment if 
\begin{align*}
\int_X d\qty(x,\overline{x})^p\dd{\mu(x)} <\infty \quad\mbox{for some}\; \overline{x}\in X,
\end{align*}
and the set of probability measures on $X$ with the finite $p$-th moment is denoted by $\mathcal{P}_p(X)$.

\item (function of $p$-growth)\; 
A function $f\colon X\to \R$ is said to have $p$-growth if there exist $A,B\ge 0$ and $\overline{x}\in X$ such that $|f(x)|\le A+B(d(x,\overline{x}))^p$ for all $x\in X$.

\item (Wasserstein distance)\;
The ($L^p$-)Wasserstein distance between $\mu^1,\mu^2\in\mathcal{P}_p(X)$ is defined by 
\begin{align*}
W_p(\mu^1,\mu^2)&\coloneqq\inf\Set{\left(\int_{X^2}d(x_1,x_2)^p\dd{\pi(x_1,x_2)}\right)^{1/p}|\pi\in\Gamma(\mu^1,\mu^2)},
\end{align*}
where $\Gamma(\mu^1,\mu^2)$ denotes the set of all Borel probability measures $\pi$ on $X^2$ such that for any measurable subset $A\subset X$,
\[
\pi\qty[A\times X]=\mu^1\qty[A],\quad\pi\qty[X\times A]=\mu^2\qty[A].
\]
\end{enumerate}
\end{definition}


By using the H\"{o}lder inequality, one easily gets 
\begin{corollary}\label{cor:WassHolder}
Let $1\le p<q <\infty$, let $X$ be a Polish space and let $\mu^1,\mu^2 \in \mathcal{P}_q(X)$. Then $W_p(\mu^1,\mu^2) \le W_q(\mu^1,\mu^2)$.
\end{corollary}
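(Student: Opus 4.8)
The plan is to exploit the fact that both Wasserstein distances are infima over the \emph{same} constraint set $\Gamma(\mu^1,\mu^2)$ of couplings, so that it suffices to compare, for a single fixed coupling, the two integrals of the cost $d$ against different powers. First I would fix an arbitrary $\pi\in\Gamma(\mu^1,\mu^2)$ and regard $g(x_1,x_2)\coloneqq d(x_1,x_2)$ as a nonnegative measurable function on the probability space $(X^2,\pi)$. Since $\pi$ has total mass one, the nesting of $L^r(\pi)$-norms applies: for $p<q$ one has $\norm{g}_{L^p(\pi)}\le\norm{g}_{L^q(\pi)}$.

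Concretely, I would obtain this from H\"older's inequality with the conjugate exponents $q/p>1$ and $q/(q-p)$, writing
\[
\int_{X^2}g^p\dd{\pi}=\int_{X^2}g^p\cdot 1\dd{\pi}\le\qty(\int_{X^2}g^q\dd{\pi})^{p/q}\qty(\int_{X^2}1\dd{\pi})^{(q-p)/q}=\qty(\int_{X^2}g^q\dd{\pi})^{p/q},
\]
where the last equality uses $\pi(X^2)=1$. Raising to the power $1/p$ gives
\[
\qty(\int_{X^2}d(x_1,x_2)^p\dd{\pi})^{1/p}\le\qty(\int_{X^2}d(x_1,x_2)^q\dd{\pi})^{1/q}
\]
for \emph{every} $\pi\in\Gamma(\mu^1,\mu^2)$.

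Two housekeeping points would then complete the argument. First, applying the identical H\"older estimate to either marginal shows $\mathcal{P}_q(X)\subset\mathcal{P}_p(X)$, so that $W_p(\mu^1,\mu^2)$ is well-defined and finite for $\mu^1,\mu^2\in\mathcal{P}_q(X)$. Second, since the displayed inequality holds for every admissible $\pi$, the left-hand infimum over $\Gamma(\mu^1,\mu^2)$ is a lower bound for each right-hand quantity; hence passing to the infimum on both sides preserves the inequality and yields $W_p(\mu^1,\mu^2)\le W_q(\mu^1,\mu^2)$.

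I do not expect any genuine obstacle here, consistent with the ``one easily gets'' remark: the only point requiring minor care is that one must pass to infima \emph{after} establishing the pointwise-in-$\pi$ estimate, rather than attempting to compare the $W_p$- and $W_q$-optimal couplings directly (which need not coincide). The essential input is simply the monotonicity of $L^r(\pi)$-norms on the probability space $(X^2,\pi)$.
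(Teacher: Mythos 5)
Your proposal is correct and follows exactly the route the paper intends: the paper offers no written proof beyond the remark ``By using the H\"older inequality, one easily gets,'' and your argument --- applying H\"older with exponents $q/p$ and $q/(q-p)$ on the probability space $(X^2,\pi)$ for a fixed coupling $\pi$, then passing to the infimum over $\Gamma(\mu^1,\mu^2)$ --- is precisely that standard computation. The care you take to establish the pointwise-in-$\pi$ estimate before taking infima, and to note $\mathcal{P}_q(X)\subset\mathcal{P}_p(X)$, is exactly the right bookkeeping.
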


\begin{lemma}[Kantrovich--Rubinstein duality~{\cite[Thoerem 1.14]{VillaniTopic}}]\label{lem:Kantrovich}
Let $(X,d)$ be a Polish space and let $\rho_0,\rho_1\in\mathcal{P}_{1}(X)$. Then
    \[W_1(\rho_0,\rho_1)=\sup\Set{\int_{X}\varphi\dd{(\rho_1-\rho_0)}|\varphi\in L^1(\abs{\rho_1-\rho_0}),\ \Lip_{X}(\varphi)\coloneqq\sup_{x\neq y\in X}\frac{\abs{\varphi(x)-\varphi(y)}}{d(x,y)}\leq 1}.\]
\end{lemma}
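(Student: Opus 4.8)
The plan is to deduce the identity from the general Kantorovich duality theorem applied to the metric cost $c(x,y)=d(x,y)$, and then to use the special structure of this cost to collapse the two dual potentials into a single $1$-Lipschitz function.

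First I would invoke Kantorovich duality (see, e.g., \cite[Chapter 6]{villani_oldnew}): because $d$ is continuous, hence lower semicontinuous, and $\rho_0,\rho_1\in\mathcal{P}_1(X)$ ensures that the primal value $W_1(\rho_0,\rho_1)$ is finite, one has
\[
W_1(\rho_0,\rho_1)=\sup\left\{\int_X\varphi\dd{\rho_0}+\int_X\psi\dd{\rho_1}\;:\;\varphi(x)+\psi(y)\le d(x,y)\right\},
\]
the supremum being taken over admissible pairs $(\varphi,\psi)$ of (bounded continuous, or suitably integrable) functions.

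The core of the argument is a $c$-transform reduction that is special to the metric cost. Given an admissible pair, I would first replace $\varphi$ by $\varphi_1(x)\coloneqq\inf_{y\in X}\left[d(x,y)-\psi(y)\right]$. As an infimum of the functions $x\mapsto d(x,y)-\psi(y)$, each $1$-Lipschitz in $x$, the function $\varphi_1$ is itself $1$-Lipschitz; the constraint is preserved by construction and $\varphi_1\ge\varphi$ pointwise, so integrating against $\rho_0\ge0$ does not decrease the objective. Next, evaluating the constraint $\varphi_1(x)+\psi(y)\le d(x,y)$ at $x=y$ gives $\psi(y)\le-\varphi_1(y)$, so replacing $\psi$ by $-\varphi_1$ again does not decrease the objective while keeping admissibility (the new constraint $\varphi_1(x)-\varphi_1(y)\le d(x,y)$ is exactly the $1$-Lipschitz property). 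Hence the supremum may be restricted to antisymmetric pairs $(\varphi,-\varphi)$ with $\Lip_X(\varphi)\le1$, and since $\int_X\varphi\dd{\rho_0}-\int_X\varphi\dd{\rho_1}=\int_X\varphi\dd{(\rho_0-\rho_1)}$ and the class of $1$-Lipschitz functions is invariant under $\varphi\mapsto-\varphi$, this is precisely the claimed right-hand side.

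Finally I would tidy up the integrability: a $1$-Lipschitz $\varphi$ obeys $\abs{\varphi(x)}\le\abs{\varphi(\overline{x})}+d(x,\overline{x})$, so the finite first moments of $\rho_0$ and $\rho_1$ force $\varphi\in L^1(\abs{\rho_1-\rho_0})$, and subtracting the constant $\varphi(\overline{x})$ does not change $\int_X\varphi\dd{(\rho_1-\rho_0)}$ because $\rho_0$ and $\rho_1$ are probability measures. I expect the main obstacle to lie not in this metric-cost reduction, which is elementary, but in the general Kantorovich duality on a non-compact Polish space: its proof requires a convex-duality or minimax argument (Fenchel--Rockafellar, or a Hahn--Banach separation in a suitable space of measures) together with care about the lower semicontinuity and integrability of the unbounded cost $d$. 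Granting that, the emergence of the $1$-Lipschitz constraint is the clean consequence described above.
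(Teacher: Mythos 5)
Your argument is correct. Note that the paper does not actually prove this lemma: the proof body is just a pointer to Dudley, Section 11.8 (and the statement itself cites Villani's Theorem 1.14), and your derivation --- general Kantorovich duality for the cost $c=d$, followed by the $c$-transform reduction that collapses an admissible pair $(\varphi,\psi)$ to $(\varphi,-\varphi)$ with $\Lip_X(\varphi)\le 1$ --- is precisely the standard argument contained in those references, so there is no substantive divergence to report. One point worth making explicit in a written-out version: the function $\varphi_1(x)=\inf_{y}\,[\,d(x,y)-\psi(y)\,]$ could a priori be identically $-\infty$. This is ruled out because admissibility at any single point $x_0$ where $\varphi(x_0)>-\infty$ gives $\psi(y)\le d(x_0,y)-\varphi(x_0)$ for all $y$, so $\varphi_1(x_0)\ge\varphi(x_0)>-\infty$; an infimum of $1$-Lipschitz functions that is finite at one point is then finite and $1$-Lipschitz everywhere. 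Combined with your first-moment bound $\abs{\varphi(x)}\le\abs{\varphi(\overline{x})}+d(x,\overline{x})$, this also guarantees that all integrals remain finite after replacing $\psi$ by $-\varphi_1$, so the chain of inequalities is legitimate.
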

\begin{proof}
    See \cite[Section 11.8]{dudley_2002}.
\end{proof}

A sufficient condition for a family with the uniformly integrable $p$-moments is known, and the proof of the following lemma is given for the sake of the reader's convenience.

\begin{lemma}[{\cite[Subsection 5.1.1]{AGS}}]\label{lem:uni_mom}
Let $p\geq1$. If a subset $K\subset\mathcal{P}(X)$ satisfies
\[
\sup_{\mu\in K}\int_X {d(x,\overline{x})}^{p_1}\dd{\mu(x)}<+\infty,
\]
for some $p_1>p$ and $\overline{x}\in X$, then $K$ has uniformly integrable $p$-moments.
\end{lemma}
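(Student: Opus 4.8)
The plan is to control the tail integral of $d(x,\overline{x})^p$ outside the ball $B_X(R,\overline{x})$ by the uniformly bounded $p_1$-moment, exploiting that $p_1>p$ to extract a negative power of $R$ that drives the relevant supremum to zero. This is a Chebyshev-type truncation argument.

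First I would set $M\coloneqq\sup_{\mu\in K}\int_X d(x,\overline{x})^{p_1}\dd{\mu(x)}$, which is finite by hypothesis. The key pointwise observation is that for every $x\in X\setminus B_X(R,\overline{x})$ one has $d(x,\overline{x})\geq R$; hence, writing $d(x,\overline{x})^p=d(x,\overline{x})^{p_1}\,d(x,\overline{x})^{p-p_1}$ and using $p-p_1<0$ together with $d(x,\overline{x})\geq R$, we obtain the bound $d(x,\overline{x})^p\leq R^{p-p_1}\,d(x,\overline{x})^{p_1}$ on that region.

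Then I would integrate this inequality over $X\setminus B_X(R,\overline{x})$ against an arbitrary $\mu\in K$ and enlarge the domain of the resulting $p_1$-integral to all of $X$, yielding
\[
\int_{X\setminus B_X(R,\overline{x})} d(x,\overline{x})^p\dd{\mu(x)} \leq R^{p-p_1}\int_X d(x,\overline{x})^{p_1}\dd{\mu(x)} \leq R^{p-p_1} M.
\]
Taking the supremum over $\mu\in K$ on the left preserves the bound $R^{p-p_1}M$ on the right, since $M$ is already independent of $\mu$. Finally, letting $R\to\infty$ and recalling $p-p_1<0$ gives $R^{p-p_1}\to 0$, so the supremum of the tail integrals tends to zero, which is exactly the definition of uniformly integrable $p$-moments.

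I do not anticipate a genuine obstacle here; the only points requiring care are the sign of the exponent $p-p_1$ (so that both the inequality direction and the decay $R^{p-p_1}\to 0$ come out correctly) and keeping the estimate uniform in $\mu\in K$, which holds automatically because the $p_1$-moment bound $M$ does not depend on $\mu$.
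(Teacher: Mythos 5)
Your proof is correct. It takes a slightly different route from the paper's: the paper first applies the Markov inequality to bound $\mu\qty[X\setminus B_X(R,\overline{x})]$ and then uses the H\"{o}lder inequality to split $\int_{X\setminus B_X(R,\overline{x})}d(x,\overline{x})^{p}\dd{\mu(x)}$ into a $p/p_1$-power of the $p_1$-moment times a $(1-p/p_1)$-power of the tail mass, whereas you use the single pointwise inequality $d(x,\overline{x})^{p}\leq R^{p-p_1}\,d(x,\overline{x})^{p_1}$ valid on $X\setminus B_X(R,\overline{x})$ and integrate it directly. Both arguments extract the same decay rate $R^{-(p_1-p)}$ uniformly in $\mu\in K$, so they are equivalent in strength; your version is the more elementary one, dispensing with H\"{o}lder and Markov altogether, and it does not even use that the measures are probability measures, only the uniform $p_1$-moment bound. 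The paper's H\"{o}lder-based phrasing is a natural template if one later wants interpolation-type refinements, but for this lemma your direct estimate is cleaner.
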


The following lemma shows a fine criterion that reveals whether a sequence $(\mu^n)\subset\mathcal{P}(X)$ has the uniformly integrable $p$-moments.

\begin{lemma}[Narrow convergence for $p$-growth functions]\label{lem:narrow_p}
A sequence $(\mu^n)$ in $\mathcal{P}(X)$ has uniformly integrable $p$-moments if and only if 
\begin{enumerate}
\item the sequence is narrowly convergent to $\mu\in\mathcal{P}(X)$, and 
\item for every continuous function $f\colon X\to\R$ of $p$-growth, 
\[
    \lim_{n\to\infty}\int_X f\dd{\mu^n}=\int_X f\dd{\mu} 
\]
\end{enumerate}
\end{lemma}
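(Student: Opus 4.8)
The plan is to prove the two implications separately by the truncation method. First observe that every bounded continuous function is of $0$-growth, hence of $p$-growth, so condition~(2) already subsumes the narrow convergence in~(1); the substantive content is therefore the equivalence, for a sequence narrowly converging to $\mu$, between uniform integrability of the $p$-moments and the convergence $\int_X f\dd{\mu^n}\to\int_X f\dd{\mu}$ for every continuous $f$ of $p$-growth. Throughout I abbreviate $h(x)\coloneqq d(x,\overline{x})^p$, which is continuous and of $p$-growth, and write $h_R\coloneqq\min\{h,R\}$ for its truncation, so that $h_R$ is bounded and continuous while $h-h_R=(h-R)_+\ge 0$.

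For the direction ``(1) and (2) $\Rightarrow$ uniform integrability'', I would specialize~(2) to $f=h$, which gives $\int_X h\dd{\mu^n}\to\int_X h\dd{\mu}$; in particular $M\coloneqq\sup_n\int_X h\dd{\mu^n}<\infty$, the limit $\int_X h\dd{\mu}$ is finite, and each $\mu^n$ has finite $p$-moment. Since $h\le 2(h-R)$ on $\{h>2R\}$, the tail satisfies $\int_{\{h>2R\}}h\dd{\mu^n}\le 2\int_X(h-h_R)\dd{\mu^n}$, and as the sets $\{h>2R\}=X\setminus\overline{B}((2R)^{1/p},\overline{x})$ exhaust the complements of balls when $R\to\infty$, controlling $\sup_n\int_X(h-h_R)\dd{\mu^n}$ is exactly what uniform integrability requires. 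Now $\int_X(h-h_R)\dd{\mu^n}=\int_X h\dd{\mu^n}-\int_X h_R\dd{\mu^n}$, where the first integral converges by~(2) and the second by the narrow convergence~(1) applied to the bounded continuous $h_R$; hence $\int_X(h-h_R)\dd{\mu^n}\to\int_X(h-h_R)\dd{\mu}$ as $n\to\infty$ for each fixed $R$. Given $\eta>0$, I would choose $R$ so large that $\int_X(h-h_R)\dd{\mu}<\eta$, then $N$ so that the first $N$ terms lie within $\eta$ of this limit, and finally enlarge $R$ to absorb the finitely many remaining measures using their individual finite $p$-moments, obtaining $\sup_n\int_X(h-h_R)\dd{\mu^n}\to 0$.

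For the converse ``uniform integrability $\Rightarrow$ (2)'' (with~(1) available), fix a continuous $f$ with $\abs{f(x)}\le A+B\,h(x)$ and a continuous cutoff $\chi_R$ satisfying $0\le\chi_R\le 1$, $\chi_R\equiv 1$ on $B(R,\overline{x})$ and $\chi_R\equiv 0$ off $B(2R,\overline{x})$. The product $f\chi_R$ is bounded and continuous, so $\int_X f\chi_R\dd{\mu^n}\to\int_X f\chi_R\dd{\mu}$ by~(1). The error $\abs{\int_X f(1-\chi_R)\dd{\mu^n}}$ is bounded by $\int_{X\setminus B(R,\overline{x})}(A+Bh)\dd{\mu^n}$; here $\int_{X\setminus B(R,\overline{x})}h\dd{\mu^n}\to 0$ uniformly in $n$ by uniform integrability, and $\mu^n(X\setminus B(R,\overline{x}))\le R^{-p}\int_{X\setminus B(R,\overline{x})}h\dd{\mu^n}$ also tends to $0$ uniformly, and the same tail bound passes to $\mu$ in the limit. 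An $\varepsilon/3$-type argument—sending first $n\to\infty$ for fixed $R$ and then $R\to\infty$—then yields $\int_X f\dd{\mu^n}\to\int_X f\dd{\mu}$.

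I expect the only delicate point to be the exchange of the limits $n\to\infty$ and $R\to\infty$ in the first direction: uniform integrability is uniform in $n$, whereas~(2) supplies convergence only for each fixed truncation level, so the finitely many initial measures must be handled separately through their individual finite $p$-moments. The auxiliary fact that $\mu$ itself has finite $p$-moment—needed to pick $R$ with $\int_X(h-h_R)\dd{\mu}<\eta$—follows from $\int_X h_R\dd{\mu}=\lim_n\int_X h_R\dd{\mu^n}\le M$ together with monotone convergence as $R\to\infty$, and is the other place where a little care is required.
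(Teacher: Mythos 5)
Your proof is correct. The paper itself does not prove this lemma---it simply cites \cite[Lemma 5.1.7]{AGS}---so there is no in-paper argument to compare against; your truncation argument is essentially the standard proof given in that reference. You handle the two genuinely delicate points properly: the exchange of the limits $n\to\infty$ and $R\to\infty$ in the forward direction (absorbing the finitely many initial measures via their individual finite $p$-moments, after using narrow convergence on the bounded truncation $h_R$ and hypothesis~(2) on $h$ itself to control the tail of the remaining ones uniformly), and the finiteness of the $p$-moment of the limit $\mu$, obtained from $\int_X h_R\dd{\mu}=\lim_n\int_X h_R\dd{\mu^n}$ and monotone convergence. You are also right to flag that, read literally, the ``only if'' direction of the lemma as stated cannot hold without narrow convergence as a standing hypothesis (uniform integrability of moments alone forces no convergence at all); AGS indeed states the equivalence for a sequence already assumed narrowly convergent, and your reading matches the way the lemma is actually used in the paper (together with \cite[Proposition 7.1.5]{AGS} to pass between $W_p$-convergence and narrow convergence with convergent $p$-moments).
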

\begin{proof} 
See \cite[Lemma 5.1.7]{AGS}.
\end{proof}
By \cref{lem:narrow_p} and \cite[Proposition 7.1.5]{AGS}, convergence in  $W_p$ and narrow convergence for $p$-growth functions are equivalent.
\subsection{Continuity equation}
The following definition and lemma are based on a famous text \cite[Chapter 4]{AGS}, to which we refer the reader who wants a general discussion of the continuity equations.
\begin{definition}[Solutions in the sense of distributions]\label{def:wk_sol}
Let $T>0$. 
A continuous curve $\mu\in C_w(\qty[0,T];\mathcal{P}(\R^d\times\Y))$ is called a solution to the continuity equation
\begin{equation}\label{eq:cont_eq}
\partial_t\mu_t+\Div_x\qty(v_t\mu_t) =0 \text{ in }(0,T)\times\R^d\times\Y,    
\end{equation}
in the sense of distribution, if
\begin{equation}
\int_{0}^{T} \int_{\mathbb{R}^{d}\times\Y}\left(\partial_{t} \psi_t(x,y)+\nabla_{x} \psi_t(x,y) \cdot v_t(x)\right) \mathrm{d} \mu_{t}(x,y) \mathrm{d} t=0\label{eq:eqweak}    
\end{equation}
for every $\psi\in\Ccinf((0,T)\times\R^d\times\Y)$.
Here a mapping $v_t \colon\R^d\ni x\mapsto v_t(x)\in \R^d$, $t\in [0,T]$, is a Borel vector field.
\end{definition}
In the following, we adopt \cref{def:wk_sol} as the solution of the continuity equation~\eqref{eq:cont_eq} with a vector field $v$.
\begin{lemma}[Representation formula for \eqref{eq:cont_eq} {\cite[Proposition 8.1.8]{AGS}}]\label{lem:ODErep}
Let $T>0$ and let $\mu\in C_w([0,T];\mathcal{P}(\R^d\times\Y))$ be a distributional solution of \eqref{eq:cont_eq} with Borel vector fields $v=(v_t)_t$. Assume that $v$ satisfies that
\begin{align}
    \int_0^T\intRdY\abs{v_t}\dd{\mu_t}\dd{t}<\infty,\label{eq:integrable1}
\end{align}
and
\begin{equation}\label{eq:Lipschitz}
    \int_0^T\qty(\sup_K\abs{v_t}+\Lip_{K}(v_t))\dd{t}<\infty
    \quad\mbox{for every compact set}\ K\subset\R^d.
\end{equation}
Here $\Lip_K(v_t)$ denotes a Lipschitz constant of the mapping $v_t\colon\R^d\to\R^d$ on $K$, i.e., \[\Lip_K(v_t)\coloneqq\sup_{x\neq y\in K}\frac{\abs{v_t(y)-v_t(x)}}{\abs{y-x}}.\]
Then, for $\mu_0$-a.e.\ $(x,y)\in\R^d\times\Y$, there exists a unique solution $X_{\bullet}(x)\in C([0,T];\R^d)$  such that 
\begin{align*}
    X_0(x)&=x,\\
    \dv{t}X_t\qty(x)&=v_t\qty(X_t\qty(x)).
\end{align*}
Furthermore, the solution $\mu_t$ is represented as 
\begin{equation}\label{eq:ODErep}
    \mu_t=\qty(X_t\times\Id_\Y)_{\#}\mu_0 \quad\text{for all}\ t\in [0,T],
\end{equation}
where $\Id_X\colon X\to X$ is the identity mapping on $X$.
\end{lemma}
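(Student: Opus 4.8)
The plan is to prove the two assertions in turn: first the existence and uniqueness of the characteristic flow $X_\bullet(x)$, and then the representation $\mu_t=\qty(X_t\times\Id_\Y)_\#\mu_0$, the latter by verifying that this pushforward is itself a distributional solution of \eqref{eq:cont_eq} and invoking a uniqueness principle for the equation. Since the continuity equation contains only $\Div_x$, the $\Y$-variable plays no dynamical role, so it suffices to study the flow of $v_t$ on $\R^d$ and tensor it with the identity on $\Y$; I will therefore suppress $\Y$ in what follows.

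For the flow I would read $\dv{t}X_t=v_t(X_t)$ as a Carath\'eodory system. By \eqref{eq:Lipschitz}, for each compact $K\subset\R^d$ the field $v_t$ is integrable in time with an $L^1$-in-time Lipschitz constant on $K$, so the Carath\'eodory--Cauchy--Lipschitz theorem yields, for every $x$, a unique maximal absolutely continuous solution with $X_0(x)=x$. To promote ``maximal'' to ``defined on all of $[0,T]$'' I would truncate: for each $R$, modify $v_t$ outside the ball $B_R$ so that it becomes globally Lipschitz in space with an $L^1$-in-time constant, obtaining a flow $X^R_\bullet(x)$ defined on all of $[0,T]$ for every $x$. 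Letting $R\to\infty$ and using that each $\mu_t$ is a probability measure (mass is conserved) together with the global bound \eqref{eq:integrable1}, one shows that the set of starting points whose trajectory escapes to infinity before time $T$ is $\mu_0$-negligible, so $X^R_\bullet(x)$ stabilizes to the desired $X_\bullet(x)$ for $\mu_0$-a.e. $x$. This is exactly the origin of the ``a.e.'' qualifier in the statement.

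For the representation I would set $\nu_t\coloneqq\qty(X_t\times\Id_\Y)_\#\mu_0$ and first check that $\nu$ solves \eqref{eq:cont_eq} with $\nu_0=\mu_0$. For $\psi\in\Ccinf((0,T)\times\R^d\times\Y)$ one differentiates $t\mapsto\int\psi_t(X_t(x),y)\dd{\mu_0}$ under the integral sign, justified by \eqref{eq:integrable1} and dominated convergence, and the chain rule together with $\dot X_t=v_t(X_t)$ gives
\[
\dv{t}\int\psi_t\dd{\nu_t}=\int\qty(\partial_t\psi_t+\nabla_x\psi_t\cdot v_t)\dd{\nu_t},
\]
which upon integration over $[0,T]$ is precisely the weak formulation \eqref{eq:eqweak} of \cref{def:wk_sol}. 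It then remains to prove that distributional solutions with a prescribed initial datum are unique; granting this, $\mu_t=\nu_t$ for all $t$ and we are done. For uniqueness I would argue by duality: the signed difference $\sigma_t=\mu_t-\nu_t$ solves \eqref{eq:cont_eq} with $\sigma_0=0$, and for any terminal datum $\phi_T\in\Ccinf(\R^d)$ the backward transport equation $\partial_t\phi+v_t\cdot\nabla_x\phi=0$ with $\phi(T,\cdot)=\phi_T$ is solved along characteristics by $\phi(t,x)=\phi_T\qty(\Phi_{t\to T}(x))$, where $\Phi_{t\to T}$ denotes the forward flow from time $t$ to $T$. Pairing $\sigma$ with $\phi$ and using both equations yields $\dv{t}\int\phi_t\dd{\sigma_t}=0$, hence $\int\phi_T\dd{\sigma_T}=\int\phi_0\dd{\sigma_0}=0$; as $\phi_T$ and $T$ are arbitrary, $\sigma\equiv0$.

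The main obstacle is the uniqueness step under merely Lipschitz (not $C^1$) regularity of $v$: the dual solution $\phi(t,\cdot)=\phi_T\circ\Phi_{t\to T}$ is then only Lipschitz, hence not an admissible smooth test function. I would circumvent this by mollifying $v$ to a smooth $v^\epsilon$, solving the dual equation with smooth data to obtain a smooth $\phi^\epsilon$, and passing to the limit in the pairing $\int\phi^\epsilon_t\dd{\sigma_t}$ as $\epsilon\to0$; the convergence of the regularized flows $\Phi^\epsilon$, and hence of $\phi^\epsilon$, is controlled precisely by the local Lipschitz bound \eqref{eq:Lipschitz} through a Gronwall estimate. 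The secondary technical point is the no-blow-up argument confining characteristics to $[0,T]$ for $\mu_0$-a.e. $x$, where the bound $\int_0^T\abs{v_s(X_s(x))}\dd{s}<\infty$ (a consequence of \eqref{eq:integrable1}) does the work. Both are standard but account for essentially all of the effort; the change-of-variables and dominated-convergence steps that show the pushforward solves the equation are routine.
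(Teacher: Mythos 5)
Your outline is essentially the argument behind the result the paper cites: the paper's own ``proof'' of \cref{lem:ODErep} consists of invoking Picard iteration for the characteristic ODE and deferring the representation formula entirely to \cite[Proposition 8.1.8]{AGS}, so what you have written is a sketch of the proof that the citation stands in for, not a competing route. Your three ingredients --- Carath\'eodory well-posedness of the flow under \eqref{eq:Lipschitz}, the change-of-variables computation showing that $(X_t\times\Id_\Y)_{\#}\mu_0$ solves \eqref{eq:cont_eq}, and uniqueness of distributional solutions via duality with a mollified backward transport equation --- are exactly the standard ones.

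One point of logical order is a genuine issue as written. You justify global existence of $X_\bullet(x)$ on $[0,T]$ for $\mu_0$-a.e.\ $x$ by the bound $\int_0^T\abs{v_s(X_s(x))}\dd{s}<\infty$, which you call ``a consequence of \eqref{eq:integrable1}.'' But \eqref{eq:integrable1} is an integral against $\mu_t$, and it controls $v$ along the trajectories of the flow only once one knows $\mu_t=(X_t\times\Id_\Y)_{\#}\mu_0$ --- the very identity being proved; so the no-blow-up step is circular in the order you propose. The standard repair (and the one used in \cite{AGS}) is to establish the duality identity $\int\phi_t\dd{\mu_t}=\int\phi_0\dd{\mu_0}$, for limits of mollified bounded Lipschitz solutions $\phi$ of the backward transport equation, \emph{first}, using only that $\mu$ solves \eqref{eq:cont_eq} together with \eqref{eq:integrable1} and \eqref{eq:Lipschitz}; testing this identity with suitable cutoffs then yields both that the set of initial points whose maximal trajectory leaves every compact set before time $T$ is $\mu_0$-negligible and, in the same stroke, the representation formula. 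With the duality step moved to the front, the rest of your argument closes.
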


\begin{proof} 
The existence result can be shown by the use of the standard argument of the Picard iteration method. For the representation result, details are proved in, e.g., \cite[Proposition 8.1.8]{AGS}.
\end{proof}

The following lemma indicates the strong relation between the Wasserstein distance and the continuity equation.

\begin{lemma}[Benamou--Brenier formula~\cite{Benamou2000}]\label{lem:BenamouBrenier}
Let $\rho_0,\rho_1\in\mathcal{P}_{2}(\R^d)$. Then, 
\begin{equation}
    W_2(\rho_0,\rho_1)^2=\inf\Set{\int_0^1\intRd\abs{v_t(x)}^2\dd{\rho_t(x)\dd t}|(\rho,v)\in V(\rho_0,\rho_1)},\label{eq:BBformula}
\end{equation}
where
\begin{align*}
V(\rho_0,\rho_1)\coloneqq\Set{(\rho,v)\in C([0,1];\mathcal{P}_2(\R^d))\times L^2(\dd{\rho_t\dd t}) |
\begin{array}{l}
\eqref{eq:cont_eq}\text{ holds in the sense of}\\
\text{\cref{def:wk_sol}, and}\\
\eval{\rho_t}_{t=0}=\rho_0,\ \eval{\rho_t}_{t=1}=\rho_1.
\end{array}
}.
\end{align*}
\end{lemma}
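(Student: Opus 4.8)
The plan is to establish the two inequalities separately; write $\mathcal{B}(\rho_0,\rho_1)$ for the infimum on the right-hand side of \eqref{eq:BBformula}.

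\textbf{Upper bound $\mathcal{B}(\rho_0,\rho_1)\le W_2(\rho_0,\rho_1)^2$.} First I would exhibit an admissible competitor realizing the transport cost by displacement interpolation. Let $\gamma\in\Gamma(\rho_0,\rho_1)$ be an optimal plan for $W_2$, which exists by the standard narrow compactness of $\Gamma(\rho_0,\rho_1)$ together with lower semicontinuity of the quadratic cost. For $t\in[0,1]$ set $F_t(x_0,x_1)\coloneqq(1-t)x_0+tx_1$ and define $\rho_t\coloneqq(F_t)_{\#}\gamma$. Using $(F_s,F_t)_{\#}\gamma$ as a competitor plan between $\rho_s$ and $\rho_t$ in the definition of $W_2$ gives $W_2(\rho_s,\rho_t)\le\abs{t-s}\,(\intRd\times_{}\,\abs{x_1-x_0}^2\dd{\gamma})^{1/2}$, so $\rho\in C([0,1];\mathcal{P}_2(\R^d))$. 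One then checks that the pair $(\rho_t,v_t)$, with $v_t$ determined by $v_t(F_t(x_0,x_1))=x_1-x_0$, solves the continuity equation in the sense of \cref{def:wk_sol}: testing against $\psi\in\Ccinf((0,1)\times\R^d)$ reduces, after the change of variables $y=F_t(x_0,x_1)$, to the elementary identity $\dv{t}\psi_t(F_t)=\partial_t\psi_t(F_t)+\nabla_x\psi_t(F_t)\cdot(x_1-x_0)$ integrated against $\gamma$. Since $\intRd\abs{v_t}^2\dd{\rho_t}=\intRd\abs{x_1-x_0}^2\dd{\gamma}=W_2(\rho_0,\rho_1)^2$ for every $t$, integrating in time produces a competitor of cost exactly $W_2(\rho_0,\rho_1)^2$.

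\textbf{Lower bound $W_2(\rho_0,\rho_1)^2\le\mathcal{B}(\rho_0,\rho_1)$.} Let $(\rho,v)\in V(\rho_0,\rho_1)$ have finite kinetic energy (otherwise nothing is to prove). The idea is to manufacture a transport plan from the flow of $v$ and apply Cauchy--Schwarz. If $v$ satisfied the hypotheses of \cref{lem:ODErep}, I would take the associated flow $X_\bullet(x)$, so that $\rho_t=(X_t)_{\#}\rho_0$; then $\pi\coloneqq(X_0\times X_1)_{\#}\rho_0\in\Gamma(\rho_0,\rho_1)$, and writing $X_1(x)-x=\int_0^1 v_t(X_t(x))\dd{t}$ yields
\begin{align*}
W_2(\rho_0,\rho_1)^2
&\le\intRd\abs{X_1(x)-x}^2\dd{\rho_0(x)}
\le\intRd\int_0^1\abs{v_t(X_t(x))}^2\dd{t}\dd{\rho_0(x)}\\
&=\int_0^1\intRd\abs{v_t}^2\dd{\rho_t\dd t},
\end{align*}
the last equality being the pushforward $\rho_t=(X_t)_{\#}\rho_0$. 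Taking the infimum over admissible pairs closes the argument.

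\textbf{Main obstacle.} The difficulty is that a generic admissible $v\in L^2(\dd{\rho_t\dd t})$ need not satisfy the local Lipschitz and integrability hypotheses \eqref{eq:integrable1}--\eqref{eq:Lipschitz} required by \cref{lem:ODErep}, so the flow above may fail to exist. The remedy, which is the technical heart of the proof, is a spatial mollification: set $\rho_t^\epsilon\coloneqq\rho_t*k_\epsilon$ and define the regularized velocity through the momentum $m_t\coloneqq v_t\rho_t$ by $v_t^\epsilon\coloneqq(m_t*k_\epsilon)/\rho_t^\epsilon$, with $k_\epsilon$ a smooth mollifier. The pair $(\rho^\epsilon,v^\epsilon)$ still solves the continuity equation, now with a smooth, hence locally Lipschitz, velocity, so the flow argument applies and gives $W_2(\rho_0^\epsilon,\rho_1^\epsilon)^2\le\int_0^1\intRd\abs{v_t^\epsilon}^2\dd{\rho_t^\epsilon\dd t}$. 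The crucial point is the joint convexity of the map $(\rho,m)\mapsto\abs{m}^2/\rho$: Jensen's inequality applied to the convolution then yields $\int\abs{v_t^\epsilon}^2\dd{\rho_t^\epsilon}\le\int\abs{v_t}^2\dd{\rho_t}$, so the regularized energy never exceeds the original one. Finally $\rho_0^\epsilon\to\rho_0$ and $\rho_1^\epsilon\to\rho_1$ in $W_2$ as $\epsilon\to0$, once the uniform integrability of second moments is controlled, and continuity of $W_2$ passes the inequality to the limit. I expect this convexity-plus-mollification step, rather than either inequality in isolation, to demand the most care.
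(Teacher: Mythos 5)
The paper does not actually prove this lemma: its ``proof'' is a one-line citation to \cite[Theorem 17.2]{Ambrosio2021}. What you have written is a reconstruction of the standard proof from that literature (displacement interpolation for the upper bound, mollification plus the joint convexity of $(\rho,m)\mapsto\abs{m}^2/\rho$ for the lower bound), and the overall architecture is correct; you also correctly identify the mollification step as the technical heart. Two points deserve more care than you give them. First, in the upper bound the prescription $v_t(F_t(x_0,x_1))=x_1-x_0$ is only well defined if $F_t$ is injective on $\supp\gamma$; this is true for $t\in(0,1)$ when $\gamma$ is an \emph{optimal} plan (cyclical monotonicity), but the cleaner route is to define $v_t$ as the conditional expectation of $x_1-x_0$ given $F_t$, in which case Jensen gives $\intRd\abs{v_t}^2\dd{\rho_t}\le\int\abs{x_1-x_0}^2\dd{\gamma}=W_2(\rho_0,\rho_1)^2$, which is all the upper bound needs. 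Second, in the lower bound it is not automatic that $v^\epsilon=(m_t*k_\epsilon)/(\rho_t*k_\epsilon)$ satisfies the hypotheses \eqref{eq:integrable1}--\eqref{eq:Lipschitz} of \cref{lem:ODErep}: you need $\int_0^1(\sup_K\abs{v_t^\epsilon}+\Lip_K(v_t^\epsilon))\dd{t}<\infty$, and the denominator $\rho_t^\epsilon$ must therefore be bounded away from zero on compacts \emph{uniformly in $t$}. This is where you should use a strictly positive mollifier (e.g.\ Gaussian) together with the uniform bound on second moments along the curve $\rho\in C([0,1];\mathcal{P}_2(\R^d))$, which guarantees $\rho_t(B(R'))\ge 1/2$ for some $R'$ independent of $t$ and hence $\inf_{x\in K}\rho_t^\epsilon(x)\ge c(K,\epsilon)>0$; combined with $\abs{m_t^\epsilon}\le\norm{k_\epsilon}_\infty(\intRd\abs{v_t}^2\dd{\rho_t})^{1/2}\in L^2(0,1)$ this closes the gap. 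With these two repairs your argument is complete, and it is worth noting that for the paper's actual use of the lemma (\cref{lem:speed}) only the direction $W_2^2\le{}$energy is needed.
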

\begin{proof}
See \cite[Theorem 17.2]{Ambrosio2021}.
\end{proof}

\section{Kinetic Regularization and an Existence Theorem}\label{sec:kin}
In this section, we discuss the existence of a minimizer to the \emph{kinetic regularized} learning problem introduced in \cref{prob:kinetic_prob}. The section begins with some background on kinetic regularization. Subsequently, We proceed to the proof of \cref{thm:existence_of_kineticODE_Net}.
Throughout the paper, we denote by $C$ a generic non-negative constant which may vary from line to line.

\subsection{Kinetic regularization}\label{sec:kin_assump}
In general, to argue minimizers of a functional $J$ as in \eqref{eq:J_kin} via the direct method of the Calculus of Variations, a minimizing sequence $((\mu^n, \theta^n))_n$ of the functional needs to be compact in an appropriate topological space. Moreover, the topology must be sufficiently strong to lead to a (lower semi)continuity of the functional. Driven by this necessity, some previous studies have tried to strengthen the topology of the space of the parameter $\theta\colon[0,T]\to\R^m$ in \cite{thorpe2020deep,bonnet2022measure,herty2022large}. However, this strong topology leads to unusual assumptions, as reviewed in \cref{subsec:formulations}.
Instead, we seek for compactness of the continuous curves $\mu^n \colon[0,T]\to\Pcal(\R^d\times\Y)$, rather than of the parameters $\theta^n$ ($n\in\mathbb{N}$). This idea is rarely seen in Machine Learning but often in the MFG theory (see, e.g.,~\cite[Theorem 6.6.]{ORRIERI20191868} and \cite[Theorem 6]{BONNET2021594}). To illustrate this idea, we need the following lemma derived from the Benamou--Brenier formula (\cref{lem:BenamouBrenier}):

\begin{lemma}[Uniform continuity estimate]\label{lem:speed}
Let $\mu\in C_w([0,T];\Pcal(\R^d\times\Y))$ be a distributional solution to the continuity equation~\eqref{eq:cont_eq} with Borel vector fields $v_t \colon\R^d\ni x\mapsto v_t(x)\in\R^d$, $t\in[0,T]$. Then it holds that 
\begin{equation*}
    W_2\qty(\mu_t,\mu_s)^2\leq(s-t)\int_t^s\intRdY\abs{v(\tau,x)}^2\dd{\mu_\tau(x,y)\dd\tau}
\end{equation*}
for $0\leq t<s\leq T$.
\end{lemma}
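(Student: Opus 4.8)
The plan is to deduce the estimate directly from the Benamou--Brenier formula (\cref{lem:BenamouBrenier}) by producing an explicit admissible competitor on the unit interval. Two adjustments are needed: first, \cref{lem:BenamouBrenier} is stated for measures on $\R^d$ with a genuine velocity field, whereas here $\mu_t$ lives on $\R^d\times\Y$ and the field $v_t$ only transports the $x$-variable; second, the formula is phrased on $[0,1]$, so the interval $[t,s]$ must be rescaled. Since \cref{lem:BenamouBrenier} gives $W_2^2$ as an \emph{infimum}, evaluating the kinetic energy at any single competitor yields the one-sided inequality we want. We may assume the right-hand side is finite, since otherwise there is nothing to prove.

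First I would lift the field to the product space. Viewing $\R^d\times\Y\subset\R^{2d}$, set $\tilde V_t(x,y)\coloneqq(v_t(x),0)$, so that $\abs{\tilde V_t(x,y)}=\abs{v_t(x)}$ and the distributional equation $\partial_t\mu_t+\Div_x(v_t\mu_t)=0$ of \cref{def:wk_sol} reads exactly $\partial_t\mu_t+\Div_{(x,y)}(\tilde V_t\mu_t)=0$ on $\R^{2d}$ (the $y$-gradient of any test function pairs against the vanishing $y$-component of $\tilde V_t$). Next I would rescale time: for $\tau\in[0,1]$ put $\rho_\tau\coloneqq\mu_{t+\tau(s-t)}$ and $w_\tau\coloneqq(s-t)\,\tilde V_{t+\tau(s-t)}$. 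A change of variables in the weak formulation shows that $(\rho,w)$ solves the continuity equation on $(0,1)$ in the sense of \cref{def:wk_sol}, with endpoints $\rho_0=\mu_t$ and $\rho_1=\mu_s$; hence $(\rho,w)\in V(\mu_t,\mu_s)$.

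Applying \cref{lem:BenamouBrenier} to this competitor and then undoing the rescaling gives
\begin{equation*}
W_2(\mu_t,\mu_s)^2
\leq\int_0^1\intRdY\abs{w_\tau}^2\dd{\rho_\tau\dd\tau}
=(s-t)^2\int_0^1\intRdY\abs{v_{t+\tau(s-t)}(x)}^2\dd{\mu_{t+\tau(s-t)}\dd\tau},
\end{equation*}
and the substitution $r=t+\tau(s-t)$, with $\dd r=(s-t)\dd\tau$, converts the right-hand side into $(s-t)\int_t^s\intRdY\abs{v_r(x)}^2\dd{\mu_r\dd r}$, which is precisely the claimed inequality.

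The main obstacle is checking that the competitor is genuinely admissible for \cref{lem:BenamouBrenier}: that statement requires $\rho\in C([0,1];(\Pcal_2(\R^d\times\Y),W_2))$ and $w\in L^2(\dd{\rho_\tau\dd\tau})$, whereas the hypothesis of the present lemma only supplies narrow continuity of $\mu$. Under the standing assumption that the energy is finite, I expect to upgrade narrow continuity to $W_2$-continuity using the metric-speed theory of \cite{AGS} (finite kinetic energy forces the curve to be absolutely continuous in $W_2$), while the integrability $w\in L^2(\dd{\rho_\tau\dd\tau})$ is inherited from finiteness of the right-hand side together with $\abs{\tilde V_t}=\abs{v_t}$. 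The remaining verification---that the distributional continuity equation is preserved under both the lift to $\R^{2d}$ and the affine time-reparametrization---is routine but must be carried out carefully to keep $(\rho,w)$ inside $V(\mu_t,\mu_s)$.
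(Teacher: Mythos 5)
Your proposal is correct and follows essentially the same route as the paper: apply the Benamou--Brenier formula, rescale time from $[0,1]$ to $[t,s]$, and use the given pair $(\mu,v)$ (lifted trivially to the product space) as an admissible competitor to obtain the one-sided bound. You are in fact more careful than the paper's own proof about the admissibility issues (the lift of $v_t$ to $\R^d\times\Y$ with vanishing $y$-component, and upgrading narrow continuity to $W_2$-continuity under finite kinetic energy), which the paper passes over silently.
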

\noindent
In the rest of the paper, we often abbreviate $\intRdY f(x)\dd{\mu(x,y)}$ to $\intRd f\dd{\mu}$ for a function $f\colon\R^d\to\R$ independent of $y\in\Y$.
\begin{proof} 
From \cref{lem:BenamouBrenier}, we have
\begin{align*}
    &W_2\qty(\mu_t,\mu_s)^2\\
    \leq&\inf_{\rho,w}\Set{\int_0^1\intRdY\abs{w_t(x)}^2\dd{\rho_t(x,y)\dd t}|\partial_t\rho+\Div_x\qty(w\rho)=0,\rho_0=\mu_t,\rho_1=\mu_s.}\\
    =&\inf_{\rho,w}\Set{\int_t^s\intRd\abs{w_\tau}^2\dd{\rho_\tau}\dd{\frac{\tau}{s-t}}|\qty(s-t)\partial_\tau\rho+\Div_x\qty(w\rho)=0,\rho_t=\mu_t,\rho_s=\mu_s.}\\
    =&(s-t)\inf\Set{\int_t^s\intRd\abs{w_\tau}^2\dd{\rho_\tau\dd \tau}|\partial_\tau\rho+\Div_x\qty(w\rho)=0,\rho_t=\mu_t,\rho_s=\mu_s.}\\
    \leq& (s-t)\int_t^s\intRd\abs{v_\tau}^2\dd{\mu_\tau}\dd{\tau}
\end{align*}
for $0\leq t<s\leq T$. 
\end{proof}

This lemma readily leads to the following:
\begin{corollary}\label{cor:equiconti}
Let $n\in\mathbb{N}$ and let $\mu^n\in C_w([0,T];\mathcal{P}_c(\R^d\times\mathcal{Y}))$ be a distributional solution to the continuity equation~\eqref{eq:cont_eq} corresponding to Borel vector fields $(v_t^n)_{t\in[0,T]}$. 
If 
\begin{align*}
\sup_{n\in\mathbb{N}}\int_0^1\int_{\R^d} |v_t^n|^2\, \dd\mu_t\dd t <\infty,
\end{align*}
then the family $(\mu^n)$ is equi-continuous.
\end{corollary}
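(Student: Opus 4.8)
The plan is to read off equi-continuity directly from the speed estimate of \cref{lem:speed}, using that the hypothesis bounds the kinetic energies \emph{uniformly} in $n$.

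First, I would apply \cref{lem:speed} to each curve $\mu^n$ with its driving field $(v_t^n)_t$; since every $\mu_t^n$ is compactly supported, hence in $\mathcal{P}_2(\R^d\times\Y)$, the $W_2$ distances below are well defined and
\begin{equation*}
W_2(\mu_t^n,\mu_s^n)^2 \le (s-t)\int_t^s \intRdY \abs{v_\tau^n(x)}^2 \dd{\mu_\tau^n(x,y)\dd\tau}, \qquad 0\le t<s\le T.
\end{equation*}

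Next, I would bound the inner $\tau$-integral by the full kinetic energy: the integrand is non-negative, so the integral over $[t,s]$ is at most the integral over $[0,T]$. Writing $M\coloneqq\sup_{n}\int_0^T\intRd\abs{v_\tau^n}^2\dd{\mu_\tau^n\dd\tau}$, which is finite by assumption, this yields the uniform estimate
\begin{equation*}
W_2(\mu_t^n,\mu_s^n) \le \sqrt{M}\,\abs{s-t}^{1/2} \qquad \text{for all } n\in\mathbb{N},\ 0\le t<s\le T.
\end{equation*}
The constant $\sqrt M$ does not depend on $n$, so this is a single $\tfrac12$-Hölder modulus of continuity shared by the whole family. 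Equi-continuity (with respect to the metric $W_2$ on the target, as needed for the Ascoli--Arzel\'a argument of \cref{lem:ascoli}) then follows at once: given $\varepsilon>0$, take $\delta\coloneqq\varepsilon^2/M$ when $M>0$ and any $\delta>0$ when $M=0$; then $\abs{s-t}<\delta$ forces $W_2(\mu_t^n,\mu_s^n)<\varepsilon$ simultaneously for every $n$.

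I do not anticipate any real difficulty in this corollary: the only mechanism is that \cref{lem:speed} converts each curve's kinetic energy into a $W_2$-modulus of continuity, and the hypothesis supplies one energy bound valid for all $n$, which is exactly what makes the modulus $n$-independent. The sole bookkeeping points are that compact support places each $\mu_t^n$ in $\mathcal{P}_2(\R^d\times\Y)$ so that \cref{lem:speed} applies, and that the finiteness of $M$ guarantees the estimate is nontrivial rather than vacuous.
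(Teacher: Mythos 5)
Your proof is correct and is exactly the argument the paper intends: \cref{lem:speed} applied to each $\mu^n$ together with the uniform kinetic-energy bound yields a single $n$-independent $\tfrac12$-H\"older modulus in $W_2$, which is equi-continuity (the paper states the corollary as "readily" following from \cref{lem:speed} and writes no further detail). Your only deviation is the silent, and correct, reading of the hypothesis's $\int_0^1\int_{\R^d}|v_t^n|^2\,\dd\mu_t\dd t$ as $\int_0^T\int_{\R^d}|v_t^n|^2\,\dd\mu_t^n\dd t$.
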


To use \cref{cor:equiconti} explicitly, we add a term
\begin{equation}
    \frac{\lambda}{2}\abs{v\qty(x,\theta)}^2,\quad\lambda>0,
\end{equation}
 to the objective functional $J$ \eqref{eq:J_kin} in \cref{prob:kinetic_prob}.
This regularization term $\abs{v\qty(x,\theta)}^{2}$ is reported to be effective in generative models in \cite{pmlr-v119-finlay20a}.
Here, we use kinetic regularization for simplicity, but in fact, one can prove the existence of a minimizer without a kinetic regularization term. See \cref{rmk:other_cases} for details.
\subsection{Existence theorem}\label{sec:existence_theorem}
Our strategy is 
to use the direct method of the Calculus of Variations, containing the following three steps:
\begin{enumerate}
    \item take a minimizing sequence $((\mu^n,\theta^n))_n$ and extract a convergent subsequence in suitable topologies,
    \item check that $J$ is lower semicontinuous with respect to those topologies and 
    \item verify that the limits of convergent subsequences satisfy the constraint~\eqref{eq:ODE2}.
\end{enumerate}
As for a minimizing sequence, we get a weakly convergent subsequence of $(\theta^n)$ in $L^2(0,T;\R^m)$ and the strongly convergent subsequence of $(\mu^n)$ in $C([0,T];(\mathcal{P}_2(\R^d\times\mathcal{Y}),W_2))$ by virtue of~\cref{cor:equiconti} and the Ascoli--Arzel\'{a} theorem. 
From these convergences and \cref{assump:linear_NN}, we observe that the functional $J$ is lower semicontinuous in $(\mu,\theta)$.
Also, we can verify that the limits solve the continuity equation again.
This is why we impose the kinetic regularization term onto the functional $J$. 

For the proof of \cref{thm:existence_of_kineticODE_Net}, we need a lemma on the boundedness of the support of $\mu_t$ uniformly in $t \in [0,T]$. 

\begin{lemma}\label{lem:cptspt}
Let $\theta\in L^2(0,T;\R^m)$ and let $\mu\in C([0,T];(\mathcal{P}_2(\R^d\times\Y),W_2))$ be a distributional solution of \eqref{eq:ODE2} corresponding to vector fields $(v(\bullet,\theta_t))_{t\in[0,T]}$. If \cref{assump:linear_NN} holds, there exists a radius $R^\ast=R^\ast\qty({\mu_0,f,T,\norm{\theta}_{L^2\qty(0,T;\R^m)}})>0$ such that
\[\supp\mu_t\subset B_{\R^d\times\Y}(R^\ast,0)\eqqcolon B_{\R^d\times\Y}(R^\ast)\text{ for all $t\in[0,T]$.}\]
\end{lemma}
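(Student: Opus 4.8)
The plan is to reduce the support estimate to a Grönwall bound along the characteristic flow, by invoking the representation formula \cref{lem:ODErep}. The only substantive preliminary work is to check that the field $v_t(x)=\theta_t f(x)$ falls within the scope of that lemma.

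First I would verify the two hypotheses \eqref{eq:integrable1} and \eqref{eq:Lipschitz}. By \cref{assump:linear_NN} the function $f$ is Lipschitz, hence of linear growth, $|f(x)|\le |f(0)|+\Lip(f)|x|$. Since $T<\infty$ we have $\theta\in L^2(0,T;\R^m)\subset L^1(0,T;\R^m)$, and the a priori regularity $\mu\in C([0,T];(\mathcal{P}_2(\R^d\times\Y),W_2))$ makes $t\mapsto W_2(\mu_t,\delta_0)^2=\int_{\R^d\times\Y}(|x|^2+|y|^2)\dd{\mu_t}$ continuous, hence bounded on $[0,T]$; in particular $\sup_{t}\int_{\R^d\times\Y}|x|\dd{\mu_t}<\infty$. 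Combining these, $\int_0^T\int|v_t|\dd{\mu_t}\dd t\le \int_0^T|\theta_t|\qty(|f(0)|+\Lip(f)\int|x|\dd{\mu_t})\dd t<\infty$, giving \eqref{eq:integrable1}. On any compact $K$ both $\sup_K|v_t|$ and $\Lip_K(v_t)$ are bounded by $|\theta_t|$ times constants depending only on $f$ and $K$, so integrating against $\theta\in L^1$ yields \eqref{eq:Lipschitz}. Thus \cref{lem:ODErep} applies and gives $\mu_t=(X_t\times\Id_\Y)_{\#}\mu_0$, where $X_\bullet(x)$ solves $\dot X_t=\theta_t f(X_t)$, $X_0=x$.

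Next I would estimate the flow. Because $\mu_0\in\Pc(\R^d\times\Y)$, its support lies in a ball $B(R_0)$, so for $x$ in the $x$-projection of $\supp\mu_0$ we have $|x|\le R_0$. From $X_t(x)=x+\int_0^t\theta_s f(X_s(x))\dd s$ and linear growth of $f$ one gets $|X_t(x)|\le R_0+|f(0)|\norm{\theta}_{L^1}+\Lip(f)\int_0^t|\theta_s|\,|X_s(x)|\dd s$, and Grönwall's inequality yields $|X_t(x)|\le \qty(R_0+|f(0)|\norm{\theta}_{L^1})\exp\qty(\Lip(f)\norm{\theta}_{L^1})$ uniformly in $t\in[0,T]$. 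Since $\norm{\theta}_{L^1(0,T;\R^m)}\le\sqrt{T}\,\norm{\theta}_{L^2(0,T;\R^m)}$ by Cauchy--Schwarz, this bound depends only on $\mu_0$, $f$, $T$ and $\norm{\theta}_{L^2(0,T;\R^m)}$. Finally, as $X_t\times\Id_\Y$ fixes the $y$-coordinate and $\Y$ is compact by \cref{assump:label_loss}, the support of $\mu_t=(X_t\times\Id_\Y)_{\#}\mu_0$ sits inside the product of the above ball with $\Y$, producing a single radius $R^\ast$ of the claimed form.

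The main obstacle is the verification of the hypotheses of \cref{lem:ODErep}, specifically the global integrability \eqref{eq:integrable1}: this is not automatic for an \emph{unbounded} velocity field and relies essentially on the a priori second-moment control encoded in $\mu\in C([0,T];(\mathcal{P}_2,W_2))$ together with $\theta\in L^1$. Once the representation formula is secured, the remaining Grönwall estimate and the passage to the support are routine.
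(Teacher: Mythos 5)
Your proposal is correct and follows essentially the same route as the paper: verify the integrability hypotheses of \cref{lem:ODErep} using the linear growth of $f$, the $L^1$ (via $L^2$) integrability of $\theta$, and the a priori moment bound from $\mu\in C([0,T];(\mathcal{P}_2,W_2))$, then apply Gr\"onwall along the characteristics $\dot X_t=\theta_t f(X_t)$ and push forward. The only cosmetic difference is that the paper bounds \eqref{eq:integrable1} by Cauchy--Schwarz in time rather than by your (equally valid) uniform first-moment bound, and the compactness of $\supp\mu_0$ already controls the $y$-coordinate without invoking compactness of $\Y$.
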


\begin{proof} 
To use \cref{lem:ODErep}, we check the assumption~\eqref{eq:integrable1} and \eqref{eq:Lipschitz}. By \cref{assump:linear_NN} and the Lipschitz continuity of $f$, we have
\begin{align*}
    \int_0^T\int_{\R^d\times\Y}\abs{v(x,\theta_t)}\dd{\mu_t}\dd{t}&\leq\int_0^T\int_{\R^d\times\Y}\abs{\theta_t}\abs{f(x)} \dd{\mu_t}\dd{t}\\
    &\leq\norm{\theta}_{L^2(0,T;\R^m)}\sqrt{\int_0^T\qty(\intRdY\abs{f(x)}\dd{\mu_t(x,y)})^2\dd{t}}\\
    &\leq\sqrt{T}\norm{\theta}_{L^2(0,T;\R^m)}\sqrt{\sup_{t\in[0,T]}\intRdY\abs{f(x)}^2\dd{\mu_t(x,y)}}\\
    &\leq C\norm{\theta}_{L^2(0,T;\R^m)}\qty(1+\sup_{t\in[0,T]}\sqrt{\int_{\R^d\times\Y}\abs{x}^2\dd{\mu_t(x,y)}})<\infty 
\end{align*}
Similarly, it holds that
\begin{align*}
    \int_0^T\qty(\sup_K\abs{v(\bullet,\theta_t)}+\Lip_K\abs{v(\bullet,\theta_t)})\dd{t}\leq C(T+\norm{\theta}_{L^2(0,T;\R^m)}^2)<\infty,
\end{align*}
for every compact set $K\subset\R^d\times\Y$.
We thus find from~\cref{lem:ODErep} that $\mu_t$ can be represented as $\mu_t=\qty(X_t,\Id_\Y)_\#\mu_0$ where $X_t\colon\R^d\to\R^d$ is the flow maps of the corresponding ODE satisfying
\begin{align*}
 &\left\{ \,
\begin{aligned}
    \dv{X_t(x)}{t}&=v(X_t(x),\theta_t)\ \text{ for }t\in(0,T),\\
    X_0(x)&=x,
\end{aligned}
\right.
 \end{align*}
for almost all $(x,y)\in\supp\mu_0$. By Gr\"{o}nwall's inequality and \cref{assump:linear_NN}, we have
    \begin{align*}
        \abs{X_t(x)}&\leq\qty(\abs{x}+C\int_0^T\abs{\theta_s}\dd{s})\exp(C\int_0^T\abs{\theta_s}\dd{s})\\
        &\leq C \qty(\diam\qty(\supp\mu_0)+T+\sqrt{T}\norm{\theta}_{L^2(0,T;\R^m)})\exp(C\sqrt{T}\norm{\theta}_{L^2(0,T;\R^m)}) \\ 
        &\le R^\ast
    \end{align*}
for some $R^\ast>0$ independent of $t$ since $\theta\in L^2(0,T;\R^m)$,
whence follows $\supp \mu_t \subset B(R^\ast)$ for all $t\in [0,T]$.
\end{proof}

\begin{remark}
    \cref{assump:linear_NN} can be generalised to \cref{assump:condNN} when one only proves \cref{lem:cptspt}. See also \cref{lem:cptspt_H^1}.
\end{remark}

With \cref{lem:cptspt}, one can now proceed with the proof of \cref{thm:existence_of_kineticODE_Net}.
\begin{proof}[Proof of~{\cref{thm:existence_of_kineticODE_Net}}]
Set
\[
S=\Set{(\mu,\theta)\in C\qty([0,T];(\mathcal{P}_2(\R^d\times\Y),W_2))\times L^2\qty(0,T;\R^m)|(\mu,\theta)\text{ satisfies \eqref{eq:ODE2}}}.
\]
Since $(\mu_0,0)\in S$ is a trivial and regular solution to the continuity equation, we see that $S\neq\emptyset$.
It is also obvious that $0\le \inf J <+\infty$ because the integrand $\ell$ is non-negative. Then, there exists a minimizing sequence $\qty(\qty(\mu^n,\theta^n))_{n=1}^\infty\subset S$ such that $J\qty(\mu^n,\theta^n)\to\inf_S J$ as $n\to\infty$. For the sequence, there exists a constant $C>0$ independent of $n$ such that
\begin{align}
\frac{\lambda}{2}\int_0^T\intRd\abs{\theta_t^n f\qty(x)}^2\dd{\mu_t^n\qty(x)\dd t}\leq C,    \label{eq:kinertic_bound}\\
\frac{\epsilon}{2}\int_0^T\abs{\theta^n_t}^2\dd{t}\leq C.\label{eq:parameter_bound}
\end{align}
From \cref{lem:speed} and \eqref{eq:kinertic_bound}, we have for $0\le t<s\le T$, 
\begin{align*}
    W_2\qty(\mu_t^n,\mu_s^n)^2\leq(s-t)\int_t^s\intRd\abs{\theta_\tau^n f\qty(x)}^2\dd{\mu_\tau^n\qty(x)}\dd{\tau}\leq\frac{2C}{\lambda}(s-t). 
\end{align*}
Hence it follows that $\qty(\mu^n)\subset C([0,T];(\mathcal{P}_2(\R^d\times\Y),W_2))$ is equi-continuous.
Also, by \cref{lem:cptspt} and \eqref{eq:parameter_bound}, there exists a constant $R^\ast>0$ independent of $n$ and $t$ such that
\[
\mu_t^n\in\Set{\mu\in\Pc(\R^d\times\Y)|\supp\mu\subset B_{\R^d\times\Y}(R^\ast)}\quad\text{ for all }n\in\mathbb{N}\text{ and all }t\in[0,T].
\] 
In addition, the set $\Set{\mu\in\Pc(\R^d\times\Y)|\supp\mu\subset B_{\R^d\times\Y}(R^\ast)}$ is compact with respect to $L^2$-Wasserstein topology because of \cite[Proposition 7.1.5]{AGS}.
Hence, \cref{lem:ascoli} and\eqref{eq:parameter_bound} imply that there exist a subsequence of $(n)$, still denoted by $n$,
$\mu^\ast\in C([0,T];(\mathcal{P}_2(\R^d\times\Y),W_2))$ and $\theta^\ast\in L^2(0,T;\R^m)$ such that
\begin{align}
\mu^{n}\to \mu^\ast &\text{ strongly in }C \qty(\qty[0,T];\qty(\mathcal{P}_2(\R^d\times\Y),W_2)),\label{eq:mu_uniform} \\ 
\theta^{n}\to\theta^\ast &\text{ weakly in }L^2\qty(0,T;\R^m).\label{eq:theta_weak}  
\end{align}
By the following \cref{claim:CE}, we can deduce that $(\mu^\ast,\theta^\ast)$ solves \eqref{eq:ODE2} in the sense of distribution.

\begin{claim}\label{claim:CE}
For the limits $\mu^\ast$ and $\theta^\ast$, it holds that
\begin{equation}
\int_{0}^{T} \int_{\R^{d}\times\Y}\left(\partial_{t} \zeta_t+\nabla_{x} \zeta_t \cdot v\qty(\bullet,\theta^\ast_t)\right) \dd{\mu_t^{\ast}\dd t}=0,   \label{eq:CEast} 
\end{equation}
for all $\zeta\in \Ccinf\qty(\qty(0,T)\times\R^d\times\Y)$.
Moreover, $\supp \mu^\ast \subset B_{\R^d\times\Y}(R)$ for some $R>0$.
\end{claim}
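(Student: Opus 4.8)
The plan is to pass to the limit in the distributional formulation of~\eqref{eq:ODE2} satisfied by each pair $(\mu^n,\theta^n)$, namely
\[
\int_{0}^{T}\intRdY\qty(\partial_t\zeta_t+\nabla_x\zeta_t\cdot v(\bullet,\theta^n_t))\dd{\mu^n_t\dd t}=0,
\]
and to show that each of the two terms converges to its counterpart with $(\mu^\ast,\theta^\ast)$. The term carrying $\partial_t\zeta_t$ is harmless: since $\partial_t\zeta$ is bounded and continuous with compact support and $\mu^n_t\to\mu^\ast_t$ narrowly for each $t$ (a consequence of~\eqref{eq:mu_uniform}), the inner integral converges pointwise in $t$ and stays uniformly bounded, so dominated convergence gives convergence of the time integral. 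The genuine difficulty lies in the velocity term, which pairs the only weakly convergent parameters $\theta^n$ against the measures $\mu^n$; such a product of a weak limit with another varying object need not converge in general.

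This is precisely where \cref{assump:linear_NN} enters, through the bilinear identity~\eqref{eq:product}. Writing $v(x,\theta)=\theta f(x)$, I would introduce the matrix-valued function
\[
G^n(t)\coloneqq\intRdY f(x)\,\nabla_x\zeta_t(x,y)^\top\dd{\mu^n_t(x,y)},
\]
so that, by linearity of the trace, $\int_0^T\intRdY\nabla_x\zeta_t\cdot v(\bullet,\theta^n_t)\dd{\mu^n_t\dd t}=\int_0^T\Tracee\qty(G^n(t)\theta^n_t)\dd t$. The integrand $f(x)\nabla_x\zeta_t^\top$ is continuous with compact support in $(x,y)$, hence bounded, so narrow convergence of $\mu^n_t$ yields $G^n(t)\to G^\ast(t)\coloneqq\intRdY f(x)\nabla_x\zeta_t^\top\dd{\mu^\ast_t}$ pointwise in $t$. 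Using the uniform support bound from \cref{lem:cptspt}, with $R^\ast$ independent of $n$ by~\eqref{eq:parameter_bound}, the quantities $\abs{G^n(t)}$ are bounded uniformly in $n$ and $t$; dominated convergence then upgrades the pointwise convergence to $G^n\to G^\ast$ strongly in $L^2(0,T)$.

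Since $\Tracee\qty(G^n(t)\theta^n_t)$ is the Frobenius pairing of $G^n$ and $\theta^n$, the standard weak--strong convergence principle (strong convergence $G^n\to G^\ast$ against weak convergence $\theta^n\rightharpoonup\theta^\ast$ from~\eqref{eq:theta_weak}) gives $\int_0^T\Tracee\qty(G^n(t)\theta^n_t)\dd t\to\int_0^T\Tracee\qty(G^\ast(t)\theta^\ast_t)\dd t$, which by~\eqref{eq:product} is exactly $\int_0^T\intRdY\nabla_x\zeta_t\cdot v(\bullet,\theta^\ast_t)\dd{\mu^\ast_t\dd t}$. Combining the two limits establishes~\eqref{eq:CEast}. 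The uniform support statement for $\mu^\ast$ follows from narrow convergence and the Portmanteau theorem: since each $\mu^n_t$ is supported in the closed ball $\overline{B_{\R^d\times\Y}(R^\ast)}$, one has $\mu^\ast_t\qty(\overline{B_{\R^d\times\Y}(R^\ast)})\ge\limsup_n\mu^n_t\qty(\overline{B_{\R^d\times\Y}(R^\ast)})=1$, so $\supp\mu^\ast_t\subset\overline{B_{\R^d\times\Y}(R^\ast)}\subset B_{\R^d\times\Y}(R)$ for any $R>R^\ast$. The main obstacle throughout is the weak-times-measure product in the velocity term, and it is resolved exactly by the decoupling~\eqref{eq:product} afforded by the linearity assumption.
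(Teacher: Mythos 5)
Your proposal is correct and follows essentially the same route as the paper: pass to the limit in the weak formulation of~\eqref{eq:ODE2}, and use the linearity of \cref{assump:linear_NN} to turn the velocity term into a bilinear pairing so that the strong convergence of $\mu^n$ in $W_2$ can be played against the weak $L^2$ convergence of $\theta^n$ (your single weak--strong pairing with $G^n\to G^\ast$ in $L^2(0,T)$ is just a regrouping of the paper's error terms $I_2$, $I_3$, $I_4$). The only cosmetic differences are that the paper estimates the measure-difference terms via the Kantorovich--Rubinstein duality rather than pointwise narrow convergence plus dominated convergence, and obtains the support bound by combining the weak lower semicontinuity of $\norm{\theta^\ast}_{L^2}$ with \cref{lem:cptspt} instead of your (equally valid) Portmanteau argument.
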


\begin{proof} 
We already know that
\begin{equation}
\int_{0}^{T} \int_{\R^{d}\times\Y}\partial_{t} \zeta_t\dd{\mu_t^{n}\dd t}+\int_{0}^{T} \int_{\R^{d}\times\Y}\nabla_{x} \zeta_t \cdot v(x,\theta^n_t) \dd{\mu_t^{n}\dd t}=0,\label{eq:CEn2}
\end{equation}
for all $\zeta\in \Ccinf\qty(\qty(0,T)\times\R^d\times\Y)$ and $n\in\mathbb{N}$. 
It follows that 
\begin{align}
\begin{split}
0 &= 
\int_{0}^{T} \int_{\R^{d}\times\Y}\partial_{t} \zeta_t\, \dd(\mu_t^{n}-\mu^\ast_t)\dd t \\ 
&\quad +\int_{0}^{T} \int_{\R^{d}\times\Y}\nabla_{x} \zeta_t(x) \cdot v(x,\theta^{\ast}_t) \dd{\qty(\mu_t^{n}-\mu^\ast_t)(x)}\dd {t}
\\ 
&\quad\quad +\int_{0}^{T} \int_{\R^{d}\times\Y}\nabla_{x} \zeta_t(x) \cdot \qty(v\qty(x,\theta^{n}_t)-v\qty(x,\theta^\ast_t))\, \dd\mu^\ast_t(x)\dd t \\
&\quad\quad\quad  +\int_{0}^{T} \int_{\R^{d}\times\Y}\nabla_{x} \zeta_t(x) \cdot \qty(v\qty(x,\theta^{n}_t)-v\qty(x,\theta^\ast_t))\, \dd(\mu_t^{n}(x)-\mu^\ast_t(x))\dd t \\
&\quad\quad\quad\quad  + 
\int_0^T\int_{\R^d\times\Y} \partial_t\zeta_t\, \dd\mu^\ast_t\dd t +
\int_{0}^{T} \int_{\R^{d}\times\Y}\nabla_{x} \zeta_t(x) \cdot v(x,\theta^{\ast}_t) \dd{\mu_t^{\ast}(x)\dd t}
\\
&\eqqcolon I_1 + I_2 + I_3 + I_4 + I_5.
\end{split}
\label{eq:claim}
\end{align}
It follow from~\eqref{eq:mu_uniform} that 
\begin{align*}
\abs{I_1}
&\le 
\int_0^T \Lip_{\R^d\times\Y}(\partial_t\zeta_t) \int_{\R^d\times\Y} \frac{\partial_t\zeta_t}{\Lip_{\R^d\times\Y}(\partial_t\zeta_t)}  \, \dd(\mu^n_t-\mu^\ast_t)\dd t \\ 
&\leq C \int_0^T W_1(\mu^n_t,\mu^\ast_t)\dd{t}
\\ 
&\leq C\int_0^TW_2(\mu^n_t,\mu^\ast_t)\dd{t}
\\
&\leq CT\sup_{t\in[0,T]}W_2(\mu^n_t,\mu^\ast_t)\to 0, 
\end{align*}
as $n\to\infty$ by the Kantrovich--Rubinstein duality (\cref{lem:Kantrovich}) and \cref{cor:WassHolder}. 
By~\cref{assump:linear_NN}, the function $\partial_t\zeta_t(x) v(x,\theta^\ast_t)$ is Lipschitz continuous in $x$ and $y$ over $\R^d\times\Y$, and thus we see again from \cref{lem:Kantrovich,cor:WassHolder} that 
\begin{align*}
\abs{I_2}
&\leq C \int_0^T W_1(\mu^n_t,\mu^\ast_t)\dd{t}
\\ 
&\leq C\int_0^TW_2(\mu^n_t,\mu^\ast_t)\dd{t}
\\
&\leq CT\sup_{t\in[0,T]}W_2(\mu^n_t,\mu^\ast_t)\to 0, 
\end{align*}
as $n\to\infty$. 

For $I_3$, we use \cref{assump:linear_NN} to apply \eqref{eq:theta_weak}. We set 
\[
\varphi_{\bullet}\coloneqq\intRdY\nabla_x\zeta_\bullet f^\top\dd{\mu_\bullet^\ast}\in L^2\qty(0,T;\R^m).
\]
In fact, it is shown that  
\[
\norm{\varphi}_{L^2\qty(0,T;\R^m)}^2\leq\int_0^T\intRdY\abs{\nabla_x\zeta_t f^\top}^2\dd{\mu_t^n\dd t}\leq C\qty(1+\sup_{t\in[0,T]}W_2(\mu_t^n,\delta_0))<\infty.
\]
Then, we can deduce that
\[
    I_3=\int_0^T\la\varphi_t,\theta^n_t-\theta^\ast_t\ra\dd{t}\to0\text{ as }n\to\infty,
\]
where $\la A,B\ra\coloneqq\Tr(A^\top B)$, $A,B\in\R^{d\times p}$ is the inner product on $\R^{d\times p}$.

As for $I_4$, it follows from~\eqref{eq:mu_uniform} and~\eqref{eq:theta_weak} that 
\begin{align*}
|I_4| 
&=
\left| \int_0^T \left\langle \theta^n_t-\theta^\ast_t, \int_{\R^d\times\Y} \nabla_x\zeta_{\bullet} f^T\, \dd(\mu^n_t-\mu^\ast_t) \right\rangle \,\dd t  
\right| \\
&\le C
\int_0^T \abs{\theta^n_t-\theta^\ast_t} W_1(\mu^n_t,\mu^\ast_t) \,\dd t \\ 
&\le C  \norm{\theta^n-\theta^\ast}_{L^2(0,T;\R^m)}
\sup_{t\in [0,T]} W_2(\mu^n_t,\mu^\ast_t)
\\ 
&\le C  \sup_{t\in [0,T]} W_2(\mu^n_t,\mu^\ast_t)\to 0.
\end{align*}
Passing to the limit as $n\to\infty$ in~\eqref{eq:claim}, we get~\eqref{eq:CEast}.

By the lower semicontinuity of the $L^2$-norm, we have 
$\norm{\theta^\ast_{\bullet}}_{L^2(0,T;\R^m)}\le \sup_{n\in\mathbb{N}}\norm{\theta^n_{\bullet}}_{L^2(0,T;\R^m)}<\infty.$
Then \cref{lem:cptspt} implies that there exists $R>0$ such that 
$\supp \mu^\ast_t \subset B_{\R^d\times\Y}(R)$ for all $t\in [0,T]$, whence follows the conclusion.
\end{proof}
We resume the proof of \cref{thm:existence_of_kineticODE_Net}.
From \cref{claim:CE}, we have $(\mu^\ast,\theta^\ast)\in S$. We then show that $J\qty(\mu^n,\theta^n)\to J(\mu^\ast,\theta^\ast)$ as $n\to\infty$. First, from \eqref{eq:mu_uniform}, \cref{assump:label_loss}, \cref{lem:narrow_p} and \cite[Proposition 7.1.5]{AGS}, it follows that 
\begin{equation}
    \intRdY\ell\dd{\mu^n_T}\to\intRdY\ell\dd{\mu^\ast_T},\label{eq:continuity_ell}
\end{equation}
as $n\to\infty$. We next estimate the regularization term. Again from \eqref{eq:parameter_bound} and \eqref{eq:mu_uniform}, we infer that
\begin{equation}
    \begin{aligned}
    &\abs{\int_0^T\intRd\abs{\theta^n_t f\qty(x)}^2\dd{\mu^n_t\qty(x)\dd t}-\int_0^T\intRd\abs{\theta^n_t f\qty(x)}^2\dd{\mu^\ast_t\qty(x)\dd t}}\\
    =&\abs{\int_0^T\la\theta^n_t\intRd ff^\top\dd{\qty(\mu^n_t-\mu^\ast_t)},\theta^n_t\ra\dd{t}}\\
    \leq&\int_0^T\abs{\intRd ff^\top\dd{\qty(\mu^n_t-\mu^\ast_t)}}\abs{\theta_t^n}^2\dd{t}
    \leq\frac{2C}{\epsilon}\max_{t\in[0,T]}\abs{\intRd ff^\top\dd{\qty(\mu^n_t-\mu^\ast_t)}}
    \to0\text{ as }n\to\infty.
\end{aligned}
    \label{eq:kin_lsc}
\end{equation}
Now we set 
\[
    \normiii*{\theta}_{L^2\qty(0,T;\R^m)}^2\coloneqq\int_0^T\la\theta_t\qty(\lambda\intRd ff^\top\dd{\mu^\ast_t}+\epsilon),\theta_t\ra\dd{t},
\]
for $\theta\in L^2\qty(0,T;\R^m)$. Since $\displaystyle\lambda\intRd ff^\top\dd{\mu^\ast_t}+\epsilon$ is positive definite matrix, the function $\normiii*{\bullet}_{L^2\qty(0,T;\R^m)}$ defines an equivalent norm of $L^2\qty(0,T;\R^m)$. Hence, it follows from \cite[Proposition 3.5]{Brezis2011} that $\normiii*{\bullet}_{L^2\qty(0,T;\R^m)}$ is weakly lower semicontinuous. Thus, we conclude that
\begin{equation}
    \begin{split}
    \inf_S J 
    &=\liminf_{n\to\infty} J\qty(\mu^n,\theta^n)\\
    &=\intRdY\ell\dd{\mu^\ast_T}+\liminf_{n\to\infty}\int_0^T\intRd\qty(\frac{\lambda}{2}\abs{\theta_t^n f\qty(x)}^2+\frac{\epsilon}{2}\abs{\theta^n_t}^2)\dd{\mu^n_t\qty(x)\dd t}\\
    &=\intRdY\ell\dd{\mu^\ast_T}+\liminf_{n\to\infty}\int_0^T\intRd\qty(\frac{\lambda}{2}\Tr\qty(\theta_t^nf(x)f(x)^\top{\theta_t^n}^\top)+\frac{\epsilon}{2}\abs{\theta^n_t}^2)\dd{\mu^n_t\qty(x)\dd t}\\
    &=\intRdY\ell\dd{\mu^\ast_T}+\liminf_{n\to\infty}\int_0^T\intRd\qty(\frac{\lambda}{2}\la\theta_t^nf(x)f(x)^\top,\theta_t^n\ra+\frac{\epsilon}{2}\la\theta_t^n,\theta_t^n\ra)\dd{\mu^n_t\qty(x)\dd t}\\
    &=\intRdY\ell\dd{\mu^\ast_T}+\frac{1}{2}\liminf_{n\to\infty}\int_0^T\la\theta_t^n\qty(\lambda\intRd ff^\top\dd{\mu^n_t}+\epsilon),\theta_t^n\ra\dd{t}\\
    &\geq\intRdY\ell\dd{\mu^\ast_T}+\frac{1}{2}\liminf_{n\to\infty}\normiii*{\theta^n}^2_{L^2\qty(0,T;\R^m)}\\
    &+\frac{\lambda}{2}\liminf_{n\to\infty}\qty(\int_0^T\intRd\abs{\theta^n_t f\qty(x)}^2\dd{\mu^n_t\qty(x)\dd t}-\int_0^T\intRd\abs{\theta^n_t f\qty(x)}^2\dd{\mu^\ast_t\qty(x)\dd t})\\
    &\geq\intRdY\ell\dd{\mu^\ast_T}+\frac{1}{2}\normiii*{\theta^\ast}^2_{L^2\qty(0,T;\R^m)}+0\\
    &=J\qty(\mu^\ast,\theta^\ast)
    \geq\inf_S J,
\end{split}
\label{eq:lsc}
\end{equation}
i.e., $J(\mu^\ast,\theta^\ast)=\inf_S J$, and the proof of \cref{thm:existence_of_kineticODE_Net} is complete.
\end{proof}

\begin{remark}[The case of $\lambda=0$]\label{rmk:other_cases}
If one only wants to show the existence of a minimizer, it is sufficient to use only $\epsilon\abs{\theta}^2/2$ as the regularization term. In other words, we can prove the theorem when $\lambda=0$. Indeed, by \cref{lem:cptspt}  it is apparent that
\[
    \intRd\abs{x}^2\dd{\mu_t^n}\leq (R^\ast)^2
\]
holds for every $t\in[0,T]$ and $n\in\mathbb{N}$, where $R^\ast>0$ is the same as the radius in \cref{lem:cptspt}. Thus, we obtain 
\begin{align*}
    W_2\qty(\mu_t^n,\mu_s^n)^2&\leq(s-t)\int_t^s\intRd\abs{\theta_\tau^n f\qty(x)}^2\dd{\mu_\tau^n\qty(x)}\dd{\tau}\\
    &\leq(s-t)\qty(\norm{\theta}^2_{L^2\qty(s,t;\R^m)}\norm{\intRd\abs{f(x)}^2\dd{\mu_\bullet^n\qty(x)}}_{L^\infty\qty(s,t)})\\
    &\leq\frac{2C(s-t)}{\epsilon}\norm{\intRd\abs{f(x)}^2\dd{\mu_\bullet^n\qty(x)}}_{L^\infty\qty(s,t)}\\
    &\leq\frac{2C(s-t)}{\epsilon}\norm{\qty(\Lip f)^2\intRd\abs{x}^2\dd{\mu_\bullet^n\qty(x)}+\abs{f(0)}^2}_{L^\infty\qty(s,t)}\\
    &\leq\frac{2C\qty(\qty(R^\ast\Lip f)^2+\abs{f(0)}^2)(s-t)}{\epsilon}
\end{align*}
from \cref{lem:speed,assump:linear_NN}, or \eqref{eq:vNorm}, and \eqref{eq:parameter_bound}. Here $\Lip f\geq0$ is a Lipschitz constant of $f$. Consequently, we can guarantee the equi-continuity of the curve $\mu$ without the kinetic regularization term. Then, we complete the proof using an argument similar to the one above. In this case, the proof of the lower semicontinuity \eqref{eq:lsc} becomes rather simple. 
It is noteworthy, however, that even in this case, deriving the convergence of $I_3$ in the proof of \cref{claim:CE} from the weak convergence of $\theta$ is difficult without imposing \cref{assump:linear_NN}. In this sense, it seems essential under $L^2$-regularization that the neural network is linear with respect to the parameters. If continuity of $\theta$ can be obtained, e.g., by $H^1$-regularization, then the convergence of $I_3$ can be easily shown. See the proof of \cref{thm:existence_of_H^1}.
\end{remark}

\begin{remark}[Uniqueness of a minimizer]\label{rmk:unique}
When a regularization parameter $\epsilon$ is sufficiently large, the uniqueness of the minimizer is proved by Bonnet et al.~\cite[Theorem 3.2]{bonnet2022measure}. 
For self-containdness, we will present details in \cref{appendix:convexity}. On the contrary, we do not refer to the uniqueness in \cref{thm:existence_of_kineticODE_Net} since $\epsilon$ might be small in most practical cases.
\end{remark}
\section{Ideal Learning Problem}\label{sec:ideal}
This section discusses the existence of a minimizer to the \emph{ideal} learning problem as introduced in \cref{prob:ideal_prob}. At the beginning of this section, we explain why we consider this problem before we prove the main theorem (\cref{thm:ideal_prob}).

\subsection{Idealization of learning problems}
Neural networks $v$ in \eqref{eq:ODE2} have, in general, a complex structure, while \cref{assump:linear_NN} imposes the simplicity of linearity of $v$ in $\theta$ to prove \cref{thm:existence_of_kineticODE_Net}.
However, it is difficult to show \cref{claim:CE} under the general assumption because the trick described in \eqref{eq:lsc} is unavailable.

Thus, we assume that $v(\bullet,\theta)$ can be any square-integrable vector field. This assumption might be justified by
\emph{universal approximation properties} resulting from the complexity. Universal approximation means that the set of functions expressed by a neural network $\Set{v(\bullet, \theta)\colon\R^d\to\R^d | \theta\in\R^m}$ is dense in appropriate function spaces (for example, Lebesgue spaces $L^p(\R^d)$). We refer the reader to \cite{Cybenko1989,HORNIK1991251,pinkus_1999} for details.
In light of these results, one can infer that 
\[
\overline{\Set{\R^d\times[0,T]\ni(x,t)\longmapsto v(x,\theta_t)\in\R^d|\theta\in L^2(0,T;\R^m)}}^{\norm{\bullet}_{L^2(\dd\mu_t\dd t)}}= L^2\qty(\dd{\mu_t\dd t}),
\]
holds where, by abuse of notation, we set for a fixed $\mu\in C([0, T];\mathcal{P}(\R^d))$
\begin{equation}
    \begin{aligned}
            &L^2\qty(\dd{\mu_t\dd t})\\
            \coloneqq&\Set{(v_t)_{t\in[0,T]}\text{ is a family of Borel vector fields on $\R^d$}|\norm{v}^2_{L^2(\dd\mu_t\dd t)}\coloneqq\int_0^T\int_{\R^d}\abs{v_t}^2\dd{\mu_t\dd t}<+\infty}.\label{eq:L^2dmudt}
    \end{aligned}
\end{equation}
This abuse is referred to in the notation in Villani's text~\cite[Equation (8.6)]{VillaniTopic}. Furthermore, if $\epsilon=0$ in \eqref{eq:J_kin}, $\theta$ only appears via $v$ in \cref{prob:kinetic_prob}. 

Therefore, we can regard \cref{prob:kinetic_prob} as a problem about vector fields (neural network) $v$, rather than parameters $\theta$. 
From the above, we consider \cref{prob:ideal_prob} as the further idealized learning problem.
The problem is similar to the variational form of MFG introduced by Lasry and Lions~\cite{LASRY2006679,Lasry2007}. We also refer the reader to more comprehensive lecture notes by Santambrosio~\cite[Subsection 2.2]{Santambrogio2020}. 
\subsection{Proof of the existence via the Lagrangian framework}\label{sec:super_ideal}
We then consider the existence of a minimizer in \cref{prob:ideal_prob}. For this problem, we want to apply a similar argument to \cref{thm:existence_of_kineticODE_Net}. However, unlike the previous problem, the space $L^2(\dd\mu_t\dd t)$ depends on $\mu$, rendering it intractable.
In such cases, it is helpful to rewrite the problem with ``probability measure on curves'' as the variable instead of ``curve on probability measures'' or $\mu\in C([0,T];\mathcal{P}(\R^d))$ as the variable. That is, we consider $Q\in\mathcal{P}(\R^d\times C([0,T];\R^d))$ as presented in \cref{prop:prob_rep}. Here $AC^2([0,T];\R^d)$ denotes the set of an absolutely continuous curve $\gamma\colon[0,T]\to\R^d$ such that there exists $m\in L^2(0,T)$ satisfying $\abs{\gamma(s)-\gamma(t)}\leq\int_{s}^t m(\tau)\dd{\tau}$ for all $s,t\in[0,T]$, $s<t$.

\begin{proposition}[Probabilistic representation]\label{prop:prob_rep}
Let $\mu\in C([0,T];(\mathcal{P}_2(\R^d),W_2))$ satisfy the continuity equation $\partial_t\mu_t+\Div\qty(v_t\mu_t)=0$ in the distributional sense for a Borel vector field $v_t$ such that
\begin{equation}
    \int_0^T\intRd\abs{v_t}^2\dd{\mu_t\dd t}<+\infty. \label{eq:integrable}
\end{equation}
Then, there exists $Q\in \mathcal{P}(\R^d\times C(\qty[0,T];\R^d))$ such that
\begin{enumerate}[label=(\roman*),ref=(\roman*)]
    \item\label{enum:concentrate} $Q$ is concentrated on the set of pairs $(x,\gamma)$ such that $\gamma\in AC^2(\qty[0,T],\R^d)$ is an absolutely continuous solution of $\dot{\gamma}(t)=v_t(\gamma(t))$ for a.a.\ $t\in(0,T)$ with $\gamma(0)=x$; 
    \item $\mu_t=\mu^{Q}_t$ for any $t\in\qty[0,T]$, where $\mu_t^{Q}$ is defined as 
    \begin{equation}
        \intRd\varphi\dd{\mu_t^{Q}}\coloneqq\int\limits_{\R^d\times C(\qty[0,T];\R^d)}\varphi\qty(\gamma\qty(t))\dd{Q(x,\gamma)},\label{eq:rewrite}        
    \end{equation}
    for all $\varphi\in C_b(\R^d)$.
\end{enumerate}
Conversely, if $Q\in \mathcal{P}(\R^d\times C(\qty[0,T];\R^s))$ satisfies \ref{enum:concentrate} and 
\begin{equation}
    \int\limits_{\R^d\times C([0,T];\R^d)}\int\limits_{0}^T\abs{\dot{\gamma}(t)}^2\dd{t\dd Q(x,\gamma)}<+\infty,\label{eq:integQ}
\end{equation}
then there exists $\mu^Q\in C([0,T];(\mathcal{P}_2(\R^d),W_2))$ induced via \eqref{eq:rewrite}, which is a solution of the continuity equation with the following vector field
    \[
        \tilde{{v}}_t(x)\coloneqq\int\limits_{\Set{\gamma \in C([0, T] ; \R^d)| \gamma(t)=x}} \dot{\gamma}(t) \dd{Q_x(\gamma)}\in L^2(\dd{\mu^Q_t\dd t}) \quad \text { for } \mu_t^Q \text {-a.e. } x \in \R^d,
    \]
where $Q_x$ is the disintegrated measures with respect to the evaluation map $\mathsf{e}_t\colon\R^d\times C([0,T];\R^d)\ni(x,\gamma)\longmapsto\gamma(t)\in\R^d$.
\end{proposition}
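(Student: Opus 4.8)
The plan is to establish the two implications separately: the forward direction is an application of Ambrosio's superposition principle, while the converse is a direct construction via disintegration. For the forward direction, I would first note that the hypothesis \eqref{eq:integrable} is more than enough to trigger the superposition principle. Since each $\mu_t$ is a probability measure, Cauchy--Schwarz gives $\int_0^T\intRd\abs{v_t}\dd{\mu_t\dd t}\le\sqrt{T}\qty(\int_0^T\intRd\abs{v_t}^2\dd{\mu_t\dd t})^{1/2}<\infty$, so the linear-growth integrability condition required by \cite[Theorem 8.2.1]{AGS} holds. That theorem produces $Q\in\Pcal(\R^d\times C([0,T];\R^d))$ concentrated on pairs $(x,\gamma)$ with $\gamma$ absolutely continuous, $\gamma(0)=x$, and $\dot\gamma(t)=v_t(\gamma(t))$ for a.a.\ $t$, and satisfying $\mu_t=(\mathsf{e}_t)_\#Q$, which is exactly property \ref{enum:concentrate} together with \eqref{eq:rewrite}. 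To upgrade $\gamma\in AC$ to $\gamma\in AC^2$, I would push the $L^2$ bound through the representation: using $\dot\gamma(t)=v_t(\gamma(t))$ and $\mu_t=(\mathsf{e}_t)_\#Q$, Fubini yields
\[
\int\int_0^T\abs{\dot\gamma(t)}^2\dd{t}\dd{Q}=\int_0^T\intRd\abs{v_t}^2\dd{\mu_t\dd t}<\infty,
\]
so $\int_0^T\abs{\dot\gamma(t)}^2\dd{t}<\infty$ for $Q$-a.e.\ $(x,\gamma)$, i.e.\ $\gamma\in AC^2$ with $m(\tau)=\abs{\dot\gamma(\tau)}$.

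For the converse, given $Q$ satisfying \ref{enum:concentrate} and \eqref{eq:integQ}, I would set $\mu^Q_t\coloneqq(\mathsf{e}_t)_\#Q$ through \eqref{eq:rewrite}. Continuity in $W_2$ follows from the elementary coupling $(\mathsf{e}_s,\mathsf{e}_t)_\#Q\in\Gamma(\mu^Q_s,\mu^Q_t)$: for $s<t$,
\[
W_2(\mu^Q_s,\mu^Q_t)^2\le\int\abs{\gamma(t)-\gamma(s)}^2\dd{Q}\le(t-s)\int\int_s^t\abs{\dot\gamma(\tau)}^2\dd{\tau}\dd{Q},
\]
which is $1/2$-Hölder in $(s,t)$ and hence gives $\mu^Q\in C([0,T];(\mathcal{P}_2(\R^d),W_2))$, the finiteness of the second moments being inherited from a finite initial second moment $\int\abs{\gamma(0)}^2\dd{Q}$ and \eqref{eq:integQ}. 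Next I would disintegrate $Q$ along $\mathsf{e}_t$ as $Q=\int_{\R^d}Q_x\dd{\mu^Q_t(x)}$ and set $\tilde v_t(x)\coloneqq\int\dot\gamma(t)\dd{Q_x(\gamma)}$, the conditional expectation of $\dot\gamma(t)$ given $\gamma(t)=x$. Jensen's inequality then bounds its energy by that of $Q$:
\[
\int_0^T\intRd\abs{\tilde v_t(x)}^2\dd{\mu^Q_t\dd t}\le\int_0^T\intRd\int\abs{\dot\gamma(t)}^2\dd{Q_x(\gamma)}\dd{\mu^Q_t\dd t}=\int\int_0^T\abs{\dot\gamma(t)}^2\dd{t}\dd{Q}<\infty,
\]
so $\tilde v\in L^2(\dd{\mu^Q_t\dd t})$.

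To confirm that $\mu^Q$ solves the continuity equation with $\tilde v$, I would fix $\psi\in\Ccinf((0,T)\times\R^d)$ and use the chain rule along $Q$-a.e.\ curve: since $\dv{t}\psi_t(\gamma(t))=\partial_t\psi_t(\gamma(t))+\nabla_x\psi_t(\gamma(t))\cdot\dot\gamma(t)$ and $\psi$ is compactly supported in time, integrating in $t$ gives $\int_0^T\qty(\partial_t\psi_t(\gamma(t))+\nabla_x\psi_t(\gamma(t))\cdot\dot\gamma(t))\dd{t}=0$ for $Q$-a.e.\ $(x,\gamma)$. Integrating this identity against $Q$ and applying Fubini, the $\partial_t\psi$ term reproduces $\int_0^T\intRd\partial_t\psi_t\dd{\mu^Q_t\dd t}$, while in the $\nabla_x\psi$ term the factor $\nabla_x\psi_t(\gamma(t))$ depends on $\gamma$ only through $\gamma(t)$, so the disintegration collapses $\dot\gamma(t)$ to $\tilde v_t(\gamma(t))$ and recovers $\int_0^T\intRd\nabla_x\psi_t\cdot\tilde v_t\dd{\mu^Q_t\dd t}$; this is precisely \eqref{eq:eqweak}.

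The main obstacle is the forward direction, which rests on the superposition principle \cite[Theorem 8.2.1]{AGS}: without a Lipschitz bound on $v_t$ the flow map of \cref{lem:ODErep} is unavailable, so one cannot simply push $\mu_0$ forward under a single flow to build $Q$. The substantive content therefore lies in that theorem, whose proof proceeds by mollification, tightness of the approximating measures on path space, and a limiting argument identifying the limit as concentrated on ODE solutions. By comparison the converse is elementary, and its only delicate points are the measurability of the disintegration $x\mapsto Q_x$ that makes $\tilde v_t$ a genuine Borel vector field and the Fubini/chain-rule bookkeeping in the final step.
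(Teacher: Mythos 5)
Your proposal is correct and follows the same route as the paper, which simply delegates both directions to the references it cites: the forward implication to Ambrosio's superposition principle \cite[Theorem 8.2.1]{AGS} (with the same Cauchy--Schwarz reduction of \eqref{eq:integrable} to the $L^1$ integrability hypothesis) and the converse to the disintegration/Jensen construction of Lisini \cite[Theorems 4 and 5]{Lisini2007}. Your write-up in fact supplies more detail than the paper does, including the $AC^2$ upgrade and the chain-rule verification of \eqref{eq:eqweak}; the only point worth flagging is that the finiteness of $\int\abs{\gamma(0)}^2\dd{Q}$ in the converse is an implicit hypothesis (automatic in the paper's application, where $\mu_0$ is compactly supported).
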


\begin{proof} 
See proofs of \cite[Theorem 8.2.1]{AGS}, \cite[Theorem 5.3]{Ambrosio2004}, and \cite[Theorems 4 and 5]{Lisini2007}.
\end{proof}
Recently, in studies for MFG~\cite{Benamou2017,Santambrogio2020}, considering $\mu$ instead of $Q$ is called the Lagrange perspective. 
This perspective is summed up in the slogan ``Think Eulerian, prove Lagrangian'' in \cite[Chapter 15]{villani_oldnew}, which is widely applied in, e.g., \cite{sarrazin22}.
We refer the reader to \cite[Subsection 8.2]{AGS} and \cite{Lisini2007} for a more general theory. 

Referring to the formulation in MFG, we rewrite the ideal \cref{prob:ideal_prob} in terms of $Q$.
Before starting a more ideal problem, we introduce an evaluation map
\begin{align*}
    \mathsf{e}_{t}&\colon(y, \gamma) \in \mathcal{Y}\times C(\qty[0,T];\R^d) \longmapsto \qty(\gamma(t),y) \in \mathbb{R}^{d}\times\mathcal{Y}, \quad \text { for } t \in[0, T].
\end{align*}
With the above, one can rewrite \cref{prob:ideal_prob} as follows: 

\begin{problem}[Ideal learning problem in the Lagrangian framework]\label{prob:super_ideal_prob}
Let $\lambda > 0$ be a constant, let $\ell\colon\R^d\times\Y\to\R_+$ be continuous, and  let $\mu_0\in\Pc(\R^d\times\Y)$ be a given input data.
Set 
\begin{align}
\widetilde{J}(Q)
\coloneqq
\int_{\mathcal{Y}\times C(\qty[0,T];\R^d)}\qty(\ell(\gamma(1),y)
+\int_0^T\frac{\lambda}{2}\abs{\dot\gamma(t)}^2\dd t)\dd Q(y,\gamma),
\label{eq:J_lag}
\end{align}
for $Q\in \mathcal{P}(\mathcal{Y}\times C(\qty[0,T];\R^d))$.
Then, the ideal learning problem in the Lagrangian framework is posed as the following constrained minimization problem:
\begin{align*}
    \inf\Set{\widetilde{J}(Q)|Q\in \mathcal{P}(\mathcal{Y}\times C(\qty[0,T];\R^d))\text{ such that }\qty(\mathsf{e}_0)_{\#}Q=\mu_0}.
\end{align*}
\end{problem}

Comparing \cref{prob:ideal_prob} and \cref{prob:super_ideal_prob}, the functional $\widehat{J}$ in \eqref{eq:J_ideal} and $\widetilde{J}$ in \eqref{eq:J_lag} have the correspondence such that
\begin{align*}
    \int_{\mathbb{R}^d\times\Y}\ell(x,y)\dd{\mu_T(x,y)}
&\Longleftrightarrow \int_{\mathcal{Y}\times C(\qty[0,T];\R^d)}\ell(\gamma(1),y)
\dd{Q(y,\gamma)},\\
    \int_0^T\int_{\mathbb{R}^d\times\Y} {\frac{\lambda}{2}\abs{v\qty(x,t)}^2}\dd{\mu_t(x,y)\dd t}&\Longleftrightarrow \int_{\mathcal{Y}\times C(\qty[0,T];\R^d)}\int_0^T\frac{\lambda}{2}\abs{\dot\gamma(t)}^2\dd{t\dd Q(y,\gamma)}.
\end{align*}
We see that \cref{prob:super_ideal_prob} has fewer constraints and fewer variables than \cref{prob:ideal_prob}.
This is because, according to \cref{prob:ideal_prob}, $\mu$ and $v$ satisfying the continuity equation \eqref{eq:ODE2} can be recovered as long as $Q$ is obtained.
This fact leads us to the existence of a minimizer for \cref{prob:super_ideal_prob}.
\begin{lemma}[Existence result for \cref{prob:super_ideal_prob}]\label{thm:super_existence}
Under \cref{assump:label_loss}, there exists a minimizer $Q\in \mathcal{P}(\mathcal{Y}\times C(\qty[0,T];\R^d))$ for \cref{prob:super_ideal_prob}.
\end{lemma}

\begin{proof} 
Set 
\[
    S=\Set{Q\in \Pcal(\Y\times C(\qty[0,T];\R^d))|\qty(\mathsf{e}_0)_{\#}Q=\mu_0},
\]
here the probability measures $\mathcal{P}(\mathcal{Y}\times C(\qty[0,T];\R^d))$ are endowed with the narrowly convergence topology.
We can easily check that a measure $\qty(\mathsf{e}_{\mathcal{Y}}\times\mathsf{c}_{\bullet})_\#\mu_0$ belongs to $S$, where we set $
\operatorname{\mathsf e}_\Y\colon\R^d\times\mathcal{Y}\ni\qty(x,y)\longmapsto y\in\mathcal{Y}$ and $\mathsf{c}\colon\R^d\times\Y\ni\qty(x,y)\longmapsto\qty{\qty[0,T]\ni t\longmapsto x\in\R^d}\in C\qty([0,T];\R^d)$. It is clear that $0\leq\widetilde{J}<+\infty$ since the integrand $\ell$ is non-negative. Thus, we take a minimizing sequence $\qty(Q^n)_{n}\subset S$ such that $\widetilde{J}\qty(Q^n)\to\inf J\in\R$ as $n\to\infty$. From the second term of \eqref{eq:J_lag}, there exists a constant $C$ independent of $n$ such that
\begin{equation}
    \frac{\lambda}{2} \int\limits_{\Y\times C\qty(\qty[0,T];\R^d)}\int\limits_0^T\abs{\dot{\gamma}(t)}^2\dd{t\dd Q^n(y,\gamma)}\leq C.
    \label{eq:kinetic_bound2}
\end{equation}
\par
Next, we claim that $(Q^n)_n$ is tight. We choose the maps $r^1$ and $r^2$ defined on $\Y\times C\qty([0,T];\R^d)$ as
\[
r^1\colon(y,\gamma)\longmapsto y\in\Y,\quad r^2\colon(y,\gamma)\longmapsto\gamma\in C\qty([0,T];\R^d).
\]
It is clear that $\qty(r^1_\#Q^n)_n$ is tight because of \cref{assump:label_loss} and Prokhorov's theorem.
In addition, the functional 
\begin{equation}
A\colon C(\qty[0,T];\R^d)\ni\gamma\longmapsto
\left\{
    \begin{aligned}
        &\int_0^T\frac{
        \lambda}{2}\abs{\dot{\gamma}(t)}^2\dd{t} &&\qty(
        \text{
        \begin{tabular}{l}
             if $\gamma$ is an absolutely continuous curve\\
             with $\abs{\dot{\gamma}}\in L^2(0,T)$ and $\gamma(0)\in\supp\mu_0$.
        \end{tabular}
        }
        )\\
        &+\infty  &&\qty(\text{otherwise})
    \end{aligned}
    \right.   \label{eq:defA}  
\end{equation}
has a compact sublevel sets in $C\qty(\qty[0,T];\R^d)$ because of the Ascoli--Arzel\'{a} theorem. Hence we can see that $\qty(r^2_\#Q^n)_n$ is also tight thanks to an integral condition for tightness~\cite[Remark 5.1.5]{AGS} and \eqref{eq:kinetic_bound2}. Then, we obtain the tightness of $\qty(Q^n)_n$ by applying a tightness criterion~\cite[Lemma 5.2.2]{AGS} for the maps $r^1$ and $r^2$.\par
Therefore, there exists a subsequence $\qty(n)$, still denoted by $n$, and $Q^\ast\in\mathcal{P}(\Y\times C([0,T];\R^d))$ such that
\[
Q^n\rightharpoonup Q^\ast\text{ in }\mathcal{P}(\Y\times C([0,T];\R^d)),
\]
by Prokhorov's theorem.\par
It remains to be verified that the limit $Q^\ast$ satisfies $\qty(\mathsf{e}_0)_\#Q=\mu_0$ and $\widetilde{J}\qty(Q^\ast)=\inf\widetilde{J}(=\lim_{n\to\infty}\widetilde{J}\qty(Q^n))$. The former is obtained by the continuity of the evaluation map $\mathsf{e}_t,\ t\in[0,T]$. The latter is shown as follows. By the continuity of $\ell$ and $\mathsf{e}_T$, we obtain that
\begin{align*}
\lim_{n\to\infty}\int\limits_{\R^d\times\Y}\ell\dd{\qty(\mathsf{e}_T)_{\#}Q^n}&=\lim_{n\to\infty}\int\limits_{\R^d\times\Y}\min\qty{\ell\circ\mathsf{e}_T, C'}\dd{Q^n}\\
&=\int\limits_{\R^d\times\Y}\min\qty{\ell\circ\mathsf{e}_T, C'}\dd{Q^\ast}\\
&=\int\limits_{\R^d\times\Y}\ell\dd{\qty(\mathsf{e}_T)_{\#}Q^\ast}    
\end{align*}
for a sufficiently large constant $C'>0$. In addition, the functional $A$ in \eqref{eq:defA} is lower semicontinuous, and we can choose $A^k\in C_{{b}}\qty(C([0,T];\R^d)),k=1,2,\dots,$ such that $A^k\nearrow A$ as $k\to\infty$ by \cite[Theorem 10.2]{Ambrosio2021}. Then, we get that for each $k\in\mathbb{N}$,
\[
\liminf_{n\to\infty}\int\limits_{C\qty([0,T];\R^d)}A\dd{Q^n}\geq\liminf_{n\to\infty}\int\limits_{C\qty([0,T];\R^d)}A^k\dd{Q^n}=\int\limits_{C\qty([0,T];\R^d)}A^k\dd{Q^\ast}.
\]
Hence, passing to the limit as $k\to\infty$ in the above inequality, we obtain 
\[
\liminf_{n\to\infty}\int\limits_{C\qty([0,T];\R^d)}A\dd{Q^n}\geq\lim_{k\to\infty}\int\limits_{C\qty([0,T];\R^d)}A^k\dd{Q^\ast}=\int\limits_{C\qty([0,T];\R^d)}A\dd{Q^\ast},
\]
by virtue of Fatou's lemma.
\end{proof}
From \cref{thm:super_existence,prop:prob_rep} we immediately obtain \cref{thm:ideal_prob}.

\begin{proof}[Proof of \cref{thm:ideal_prob}]
Let $Q^\ast$ denote the minimizer of $\widetilde{J}$. From \cref{prop:prob_rep}, we can get $\mu^{Q^\ast}\in C([0,T];\mathcal{P}_2(\R^d\times\Y))$ satisfying \eqref{eq:integQ} and $\tilde{v}\in L^2(\dd{\mu^{Q^\ast}_t\dd t})$. By \cite[Theorem 5]{Lisini2007}, we have $\widetilde{J}(Q^\ast)=\widehat{J}(\mu^{Q^\ast},\tilde{v})$. From this equality and \cref{prop:prob_rep}, it follows that $\widehat{J}(\mu^{Q^\ast},\tilde{v})\leq \widehat{J}(\mu,v)$ for any $\mu\in C([0,T];\mathcal{P}_2(\R^d\times\Y))$ and $v\in L^2(\dd\mu_t\dd t)$.
\end{proof}

\section{Conclusion}\label{sec:conclusion}
In this paper, we introduced the kinetic regularized learning problem (\cref{prob:kinetic_prob}) and proved the existence of its minimizer in \cref{thm:existence_of_kineticODE_Net}. A key idea in the proof is to show that a sequence of curves $(\mu^n)\subset C([0,T];(\mathcal{P}_2(\R^d\times\Y),W_2))$, rather than a parameter $(\theta^n)\subset L^2(0,T;\R^m)$, converges strongly.
Furthermore, we attempted to idealize \cref{prob:kinetic_prob} as \cref{prob:ideal_prob}, although the relationship between this idealization and the existing neural network is unclear. However, considering the minimizers of \cref{prob:ideal_prob} will provide essential clues for understanding deep learning in the future.

Our results can be further developed through a generalization of neural networks and regularization terms. The directions of each generalization are described below and will be subjects of future work.
\subsection{For general neural network architectures}\label{subsec:general}
It remains to establish an existence result for neural networks more general than \cref{assump:linear_NN}. A general $l$-layer neural network $v$ is a continuous vector field satisfying the following assumptions:
\begin{assumption}[General $l$-layer neural network]\label{assump:condNN}
There exists $C>0$, it holds that
\begin{align}
\abs{v(x,\theta)}&\leq  C\abs{\theta}^l(1+\abs{x}),&&\text{ for }x\in\R^d,\label{eq:vNorm}\\
\abs{v(x_1,\theta)-v(x_2,\theta)}&\leq C\abs{\theta}^l\abs{x_1-x_2},&&\text{ for }x_1,x_2\in\R^d,\label{eq:vLip}
\end{align}
for $\theta\in\R^m$. 
\end{assumption}
\noindent Note that we also assume that $v\colon\R^d\times\R^m\to\R^d$ is continuous in \cref{prob:kinetic_prob}.

For example, $v$ satisfying \cref{assump:linear_NN} is a $1$-layer neural network. In practice, $2,3$-layer neural network is often used. We mentioned in \cref{subsec:aim} that the nonlinearity of such $l$-layer neural networks hinders the proof of existence theorems, especially \cref{claim:CE}.
To relax this nonlinearity, it may be effective to consider a \emph{mean-field} neural network
\begin{equation}
    \mathcal{V}(x,\vartheta)=\int_{\R^m} v(x,\theta)\dd{\vartheta(\theta)},\label{eq:mean_NN}
\end{equation}
where $\vartheta$ is a learnable probability measure on $\R^m$. 
This assumption has long been known as the Young measure~\cite{Castaing2004,Pogodaev2016} in optimal control theory, but it has recently been recognized again as a helpful approach to shallow neural networks\cite{mei18land,NEURIPS2018_a1afc58c,pmlr-v139-akiyama21a} and ODE-Nets~\cite{lu20b,jabir2021meanfieldneural,ding22overparam}. The author is in the process of conducting further theoretical research using this network $\mathcal{V}$.
\appendix
\section{$H^1$-Regularization}\label{appendix:H^1}
We discuss the existence of a minimizer in the same problem setting as \cite{thorpe2020deep}.
\begin{problem}[$H^1$-regularized learning problem]\label{prob:H^1_prob}
Let $\lambda > 0$ be a constant, let $\Y$ be a subset of $\R^d$ and let $v\colon\R^d\times\R^m\to\R^d$ and $\ell\colon\R^d\times\Y\to\R_+$ be continuous. Let $\mu_0\in\mathcal{P}_c(\R^d\times\Y)$ a given input data. Set
\begin{equation}
    J_{H^1}(\mu,\theta)\coloneqq\int_{\mathbb{R}^d\times\Y}\ell\dd{\mu_T}+\frac{\lambda}{2}\norm{\theta}_{H^1(0,T;\R^m)}^2\label{eq:J2}
\end{equation}
for $\mu\in C\qty(\qty[0,T];(\mathcal{P}_2(\R^d\times\mathcal{Y}),W_2))$ and $\theta\in H^1\qty(0,T;\R^m)$.
The $H^1$-regularized learning problem constrained by ODE-Net is posed as the following constrained minimization problem:
\begin{gather} 
    \notag 
    \inf \Set{ J_{H^1}(\mu,\theta) | \mu\in C\qty(\qty[0,T];(\mathcal{P}_2(\R^d\times\mathcal{Y}),W_2)),\; \theta\in H^1\qty(0,T;\R^m)  }, 
    \intertext{subject to } 
    \left\{
    \begin{aligned}
        \partial_{t} \mu_{t}+\Div_{x}\left(v\left(\bullet, \theta_{t}\right) \mu_{t}\right)&=0, \\ 
        \left.\mu_{t}\right|_{t=0}&=\mu_{0},
    \end{aligned}
    \right.\label{eq:ODEH}
\end{gather}
\end{problem}
\noindent We note that the constraint \eqref{eq:ODEH} is the same as \eqref{eq:ODE2}.

For \cref{prob:H^1_prob}, we can obtain an existence result without \cref{assump:linear_NN}.
\begin{theorem}[Existence theorem for \cref{prob:H^1_prob}]\label{thm:existence_of_H^1}
Under \cref{assump:label_loss,assump:condNN}, there exists a minimizer $(\mu,\theta)\in  C\qty(\qty[0,T];(\mathcal{P}_2(\R^d\times\mathcal{Y}),W_2))\times H^1\qty(0,T;\R^m)$ of \eqref{eq:J2} in \cref{prob:H^1_prob}. 
\end{theorem}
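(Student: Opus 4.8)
The plan is to run the direct method of the Calculus of Variations, along the same three steps used for \cref{thm:existence_of_kineticODE_Net}, but exploiting that the $H^1$-regularization controls the \emph{time derivative} of $\theta$. The decisive consequence is the compact Sobolev embedding $H^1(0,T;\R^m)\hookrightarrow\hookrightarrow C([0,T];\R^m)$ (in one time variable $H^1$ functions are $1/2$-Hölder, so bounded sets are equi-continuous and Arzelà--Arzel\'{a} applies). This yields \emph{uniform} convergence of a minimizing sequence $(\theta^n)$, which is precisely what lets us dispense with the linearity hypothesis \cref{assump:linear_NN}. First I would set $S$ to be the set of admissible pairs $(\mu,\theta)\in C([0,T];(\mathcal{P}_2(\R^d\times\Y),W_2))\times H^1(0,T;\R^m)$ solving \eqref{eq:ODEH} in the sense of \cref{def:wk_sol}; as before $(\mu_0,0)\in S$, so $S\neq\emptyset$ and $0\le\inf_S J_{H^1}<+\infty$. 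For a minimizing sequence $((\mu^n,\theta^n))_n$, nonnegativity of $\ell$ gives $\tfrac{\lambda}{2}\norm{\theta^n}_{H^1(0,T;\R^m)}^2\le J_{H^1}(\mu^n,\theta^n)\le C$, hence $\sup_n\norm{\theta^n}_{H^1(0,T;\R^m)}<\infty$.

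Next I would extract compactness. By the $H^1$-bound and the compact embedding, pass to a subsequence with $\theta^n\rightharpoonup\theta^\ast$ weakly in $H^1(0,T;\R^m)$ and $\theta^n\to\theta^\ast$ \emph{uniformly} on $[0,T]$; in particular $\Theta\coloneqq\sup_{n,t}\abs{\theta^n_t}<\infty$. This uniform bound together with the growth estimate \eqref{eq:vNorm} gives $\abs{v(x,\theta^n_t)}\le C\Theta^l(1+\abs{x})$, a linear growth in $x$ uniform in $(n,t)$, so the $H^1$-analogue of \cref{lem:cptspt} (namely \cref{lem:cptspt_H^1}, proved by the same Grönwall argument) produces a single radius $R^\ast$ with $\supp\mu^n_t\subset B_{\R^d\times\Y}(R^\ast)$ for all $n$ and $t$. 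On this fixed ball $\abs{v(x,\theta^n_t)}^2$ is bounded, so \cref{lem:speed} yields $W_2(\mu^n_t,\mu^n_s)^2\le C(s-t)^2$, i.e.\ $(\mu^n)$ is uniformly Lipschitz, hence equi-continuous. Since the measures supported in the compact ball $B_{\R^d\times\Y}(R^\ast)$ form a $W_2$-compact set by \cite[Proposition 7.1.5]{AGS}, \cref{lem:ascoli} furnishes a further subsequence with $\mu^n\to\mu^\ast$ strongly in $C([0,T];(\mathcal{P}_2(\R^d\times\Y),W_2))$.

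The core step is to verify that the limit still satisfies the constraint, i.e.\ the analogue of \cref{claim:CE}: passing to the limit in
\[
\int_0^T\int_{\R^d\times\Y}\bigl(\partial_t\zeta_t+\nabla_x\zeta_t\cdot v(x,\theta^n_t)\bigr)\dd{\mu^n_t\dd t}=0,\qquad \zeta\in\Ccinf((0,T)\times\R^d\times\Y).
\]
Here the nonlinearity of $v$ in $\theta$, which forced the bilinearity trick \eqref{eq:product} and made $I_3$ the delicate term in the kinetic proof, is no longer an obstacle: since $v$ is continuous, hence uniformly continuous on the compact set $B_{\R^d\times\Y}(R^\ast)\times\overline{B_{\R^m}(\Theta)}$, the uniform convergence $\theta^n\to\theta^\ast$ gives $v(\cdot,\theta^n_\cdot)\to v(\cdot,\theta^\ast_\cdot)$ uniformly on $\supp\mu^n_t$. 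Splitting $v(x,\theta^n_t)\dd{\mu^n_t}-v(x,\theta^\ast_t)\dd{\mu^\ast_t}$ into $(v(x,\theta^n_t)-v(x,\theta^\ast_t))\dd{\mu^n_t}$ and $v(x,\theta^\ast_t)\dd{(\mu^n_t-\mu^\ast_t)}$, the first piece vanishes by this uniform convergence (with $\nabla_x\zeta$ bounded and $\mu^n_t$ probability measures) and the second by $W_2$-convergence of $\mu^n$ against the bounded continuous integrand; the $\partial_t\zeta_t$ term vanishes likewise via \cref{lem:Kantrovich} and \cref{cor:WassHolder}. Thus $(\mu^\ast,\theta^\ast)\in S$.

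Finally I would check lower semicontinuity of $J_{H^1}$. The loss term converges, $\int_{\R^d\times\Y}\ell\dd{\mu^n_T}\to\int_{\R^d\times\Y}\ell\dd{\mu^\ast_T}$, from $\mu^n_T\to\mu^\ast_T$ in $W_2$ and the $2$-growth of $\ell$ via \cref{lem:narrow_p} (as in \eqref{eq:continuity_ell}; since the supports are uniformly bounded by $R^\ast$, $\ell$ is in fact bounded there). The regularization term $\tfrac{\lambda}{2}\norm{\theta}_{H^1(0,T;\R^m)}^2$ is weakly lower semicontinuous on the Hilbert space $H^1(0,T;\R^m)$, so $\norm{\theta^\ast}_{H^1(0,T;\R^m)}^2\le\liminf_n\norm{\theta^n}_{H^1(0,T;\R^m)}^2$. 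Combining, $J_{H^1}(\mu^\ast,\theta^\ast)\le\liminf_n J_{H^1}(\mu^n,\theta^n)=\inf_S J_{H^1}$, and since $(\mu^\ast,\theta^\ast)\in S$ the reverse inequality is automatic, giving a minimizer. The main obstacle is the constraint-passage step, but under $H^1$-regularization it is genuinely easier than in \cref{sec:kin}: the compact embedding into $C^0$ replaces \cref{assump:linear_NN}, and the only real work is securing the uniform compact support $R^\ast$ (through the uniform $L^\infty$-bound on $\theta^n$ and \eqref{eq:vNorm}) and the attendant uniform continuity of $v$.
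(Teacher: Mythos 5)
Your proposal is correct and follows essentially the same route as the paper's proof: the $H^1$-bound plus the compact embedding $H^1(0,T;\R^m)\hookrightarrow C([0,T];\R^m)$ gives uniform convergence of $\theta^n$, \cref{lem:cptspt_H^1} and \cref{lem:speed} give a uniform support radius and equi-continuity of $(\mu^n)$, and the constraint passes to the limit without \cref{assump:linear_NN} because $v(\cdot,\theta^n_\cdot)\to v(\cdot,\theta^\ast_\cdot)$ uniformly on the compact supports. The only differences are cosmetic — you merge the paper's terms $I_3$ and $I_4$ from \eqref{eq:claim} into a single piece handled by uniform continuity of $v$ on a compact set, and you obtain a Lipschitz-in-time rather than H\"older-$1/2$ modulus for $\mu^n$ — neither of which changes the argument.
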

Before the proof of \cref{thm:existence_of_H^1}, we prepare a lemma  similar to \cref{lem:cptspt}.
\begin{lemma}\label{lem:cptspt_H^1}
Let $\theta\in H^1(0,T;\R^m)$ and let $\mu\in C([0,T];(\mathcal{P}_2(\R^d\times\Y),W_2))$ be a distributional solution of \eqref{eq:ODEH} corresponding to a vector fields $(v(\bullet,\theta_t))_t$. If \cref{assump:condNN} holds, there exists a radius $R^\ast=R^\ast({\mu_0,f,T,\norm{\theta}_{H^1\qty(0,T;\R^m)}})>0$ such that
\[\supp\mu_t\subset B_{\R^d\times\Y}(R^\ast)\text{ for all $t\in[0,T]$.}\]
\end{lemma}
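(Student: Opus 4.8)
The plan is to follow the same three-step strategy as in the proof of \cref{lem:cptspt}: verify the hypotheses of the representation formula \cref{lem:ODErep}, obtain the flow-map representation $\mu_t=(X_t\times\Id_\Y)_\#\mu_0$, and then close a Grönwall estimate on $\abs{X_t(x)}$. The one essential new ingredient, which replaces the linearity exploited in \cref{lem:cptspt}, is the one-dimensional Sobolev embedding $H^1(0,T;\R^m)\hookrightarrow C([0,T];\R^m)$: it yields a bound $M\coloneqq\sup_{t\in[0,T]}\abs{\theta_t}\le C\norm{\theta}_{H^1(0,T;\R^m)}<\infty$, so that the otherwise troublesome factor $\abs{\theta_t}^l$ appearing in \cref{assump:condNN} is controlled uniformly in $t$.

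First I would check the integrability condition \eqref{eq:integrable1} and the local-Lipschitz condition \eqref{eq:Lipschitz}. For the former, the growth bound \eqref{eq:vNorm} together with the $L^\infty$-bound $M$ gives $\int_0^T\intRdY\abs{v(x,\theta_t)}\dd{\mu_t}\dd t\le CM^l\int_0^T\qty(1+\intRd\abs{x}\dd{\mu_t})\dd t$, which is finite because $\mu\in C([0,T];(\mathcal{P}_2(\R^d\times\Y),W_2))$ has uniformly bounded first moments on the compact interval $[0,T]$. For the latter, on any compact $K\subset\R^d\times\Y$ the estimates \eqref{eq:vNorm} and \eqref{eq:vLip} give $\sup_K\abs{v(\bullet,\theta_t)}\le CM^l(1+\sup_K\abs{x})$ and $\Lip_K(v(\bullet,\theta_t))\le CM^l$, so the time integral is bounded by $CM^lT(1+\sup_K\abs{x})<\infty$.

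With these hypotheses verified, \cref{lem:ODErep} provides the representation $\mu_t=(X_t\times\Id_\Y)_\#\mu_0$, where $X_\bullet(x)$ is the unique solution of $\dot X_t(x)=v(X_t(x),\theta_t)$ with $X_0(x)=x$, for $\mu_0$-a.e.\ $(x,y)$. Then, using \eqref{eq:vNorm} and the bound $M$, one has $\dv{t}\abs{X_t(x)}\le\abs{v(X_t(x),\theta_t)}\le CM^l\qty(1+\abs{X_t(x)})$, and Grönwall's inequality yields $\abs{X_t(x)}\le\qty(\abs{x}+CM^lT)\exp(CM^lT)$ uniformly in $t\in[0,T]$. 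Since $\abs{x}\le\diam(\supp\mu_0)$ for $(x,y)\in\supp\mu_0$, this bound defines a radius $R^\ast=R^\ast(\mu_0,f,T,\norm{\theta}_{H^1(0,T;\R^m)})$ with $\supp\mu_t\subset B_{\R^d\times\Y}(R^\ast)$ for all $t$.

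The main --- and essentially the only --- obstacle compared with \cref{lem:cptspt} is handling the nonlinear parameter dependence $\abs{\theta_t}^l$; under the mere $L^2$-regularization of \cref{prob:kinetic_prob} one cannot bound $\sup_t\abs{\theta_t}^l$, which is precisely why the linearity \cref{assump:linear_NN} was needed there. Here the stronger $H^1$-regularization buys exactly the pointwise-in-time control of $\theta$ required to push the Grönwall argument through for a general $l$-layer network, and the remaining steps are routine.
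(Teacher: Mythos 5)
Your proposal is correct and follows essentially the same route as the paper: both verify the hypotheses of \cref{lem:ODErep} and close the Gr\"onwall estimate on the flow map, with the one-dimensional Sobolev embedding supplying the control of $\abs{\theta_t}^l$ that linearity provided in \cref{lem:cptspt}. The only cosmetic difference is that the paper bounds $\norm{\theta}_{L^l}^l$ by $\norm{\theta}_{H^1}^l$ while you use the sup-norm bound $M\le C\norm{\theta}_{H^1}$; these are interchangeable here.
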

\begin{proof}
The strategy of the proof is the same as that of the proof of \cref{lem:cptspt}.
 By \cref{assump:condNN} and Sobolev inequality, we have
\begin{align*}
    &\int_0^T\int_{\R^d\times\Y}\abs{v(x,\theta_t)}\dd{\mu_t}\dd{t}\\
    \leq&C\int_0^T\int_{\R^d\times\Y}\abs{\theta_t}^l\qty(1+\abs{x}) \dd{\mu_t}\dd{t}\\
    \leq& C\norm{\theta}^{l}_{L^l(0,T;\R^m)}\qty(1+\sup_{t\in[0,T]}\int_{\R^d\times\Y}\abs{x}\dd{\mu_t})\\
    \leq& C\norm{\theta}^{l}_{H^1(0,T;\R^m)}\qty(1+\sup_{t\in[0,T]}\sqrt{\int_{\R^d\times\Y}\abs{x}^2\dd{\mu_t}})\\
    =& C\norm{\theta}^{l}_{H^1(0,T;\R^m)}\qty(1+\sup_{t\in[0,T]}W_2(\mu_t,\delta))<\infty.
\end{align*}
 Also, again using \cref{assump:condNN} and Sobolev inequality, we can estimate $\abs{X_t(x)}$ in the proof of \cref{lem:cptspt} by $\norm{\theta}_{H^1(0,T;\R^m)}$.
\end{proof}
\begin{proof}[Proof of~{\cref{thm:existence_of_H^1}}]
Set
\[
S=\Set{(\mu,\theta)\in C\qty([0,T];(\mathcal{P}_2(\R^d\times\Y),W_2))\times H^1\qty(0,T;\R^m)|(\mu,\theta)\text{ satisfies \eqref{eq:ODEH}}}.
\]
It is obvious that $S\neq\emptyset$ and $0\leq J_{H^1}<\infty$ on $S$. Then, we can take a minimizing sequence $\qty(\qty(\mu^n,\theta^n))_{n=1}^\infty\subset S$ such that $J_{H^1}\qty(\mu^n,\theta^n)\to\inf_S J_{H^1}$ as $n\to\infty$. By the second term of \eqref{eq:J2}, there exists a constant $C>0$ such that
\begin{equation}
    \frac{\lambda}{2}\norm{\theta^n}^2_{H^1}\leq C,\label{eq:H^1bound}
\end{equation}
for all $n\in\mathbb{N}$.
From \cref{lem:speed}, \cref{assump:condNN}, \eqref{eq:H^1bound} and the Sobolev inequality, we have for $0\leq t<s\leq T$,
\begin{align*}
    W_2\qty(\mu_t^n,\mu_s^n)^2&\leq(s-t)\int_t^s\intRd\abs{v(x,\theta^n_\tau)}^2\dd{\mu_\tau^n\qty(x)\dd{\tau}}\\
    &\leq C(s-t)\int_t^s\intRd\abs{\theta_\tau}^{2l}\qty(1+\abs{x}^2)\dd{\mu_\tau^n\qty(x)\dd{\tau}}\\
    &\leq C(s-t)\int_t^s\intRd\abs{\theta_\tau}^{2l}\qty(1+{R^\ast}^2)\dd{\mu_\tau^n\qty(x)\dd{\tau}}\\
    &\leq C\norm{\theta}_{L^{2l}(0,T;\R^m)}^{2l}(s-t)\\
    &\leq C\norm{\theta}_{H^{1}(0,T;\R^m)}^{2l}(s-t)\\
    &\leq C(s-t),
\end{align*}
where $R^\ast>0$ is the constant appeared in \cref{lem:cptspt_H^1}.
Hence, there exist a subsequence $\qty(n')\coloneqq\qty(n\qty(k))_{k=1}^\infty\subset\Z_{>0}$ and $(\mu^\ast,\theta^\ast)\in C\qty(0,T;(\mathcal{P}(\R^d\times\Y),W_2))\times H^1\qty(0,T;\R^m)$ such that
\begin{align}
\theta^{n'} \to \theta^\ast&\text{ weakly in }H^1\qty(0,T;\R^m), \label{eq:H1wk} \\
\theta^{n'} \to \theta^\ast & \text { strongly in } C(0,T;\R^m), \label{eq:Cstrong}\\
\mu^{n'}\to \mu^\ast&\text{ strongly in }C\qty(0,T;(\mathcal{P}(\R^d\times\Y),W_2)).\label{eq:mu_wk}    
\end{align}
Here, we used the Sobolev embedding theorem in \eqref{eq:Cstrong}.
By the above, we can deduce the following claim:
\begin{claim}\label{claim:CE2}
    The limits $\mu^\ast$ and $\theta^\ast$ satidfy \eqref{eq:CEast} for all $\zeta\in \Ccinf\qty((0,T)\times\R^d\times\Y)$.
\end{claim}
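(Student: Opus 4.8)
The plan is to pass to the limit in the distributional formulation of the continuity equation along the subsequence $(n')$, exploiting that the $H^1$-regularization yields \emph{strong} (uniform-in-time) convergence of the parameters in \eqref{eq:Cstrong}, rather than merely weak convergence. This is the decisive structural difference from \cref{claim:CE}: in the linear case one had only weak convergence of $\theta^{n'}$ and had to linearize the velocity term via the bilinearity identity \eqref{eq:product}, whereas here the uniform convergence of $\theta^{n'}$ lets us treat the nonlinear dependence $\theta\mapsto v(\bullet,\theta)$ directly, so that \cref{assump:condNN} (in place of \cref{assump:linear_NN}) suffices.

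First I would record that, since each $(\mu^{n'},\theta^{n'})$ lies in the constraint set, it solves \eqref{eq:ODEH} in the sense of distributions, i.e.
\[
\int_0^T\intRdY\qty(\partial_t\zeta_t+\nabla_x\zeta_t\cdot v(\bullet,\theta^{n'}_t))\dd{\mu^{n'}_t\dd t}=0
\]
for every $\zeta\in\Ccinf((0,T)\times\R^d\times\Y)$. Subtracting the desired identity \eqref{eq:CEast} and noting that the latter does not depend on $n'$, it suffices to show that the resulting difference, which I would organize as $J_1+J_{2a}+J_{2b}$, tends to $0$ as $n'\to\infty$. Here $J_1\coloneqq\int_0^T\intRdY\partial_t\zeta_t\dd{(\mu^{n'}_t-\mu^\ast_t)\dd t}$, while $J_{2a}$ collects the velocity term against $\mu^{n'}_t$ with the increment $v(\bullet,\theta^{n'}_t)-v(\bullet,\theta^\ast_t)$, and $J_{2b}\coloneqq\int_0^T\intRdY\nabla_x\zeta_t\cdot v(\bullet,\theta^\ast_t)\dd{(\mu^{n'}_t-\mu^\ast_t)\dd t}$. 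For $J_1$ and $J_{2b}$ I would argue exactly as for $I_1$ and $I_2$ in \cref{claim:CE}: the integrands $\partial_t\zeta_t$ and $\nabla_x\zeta_t\cdot v(\bullet,\theta^\ast_t)$ are Lipschitz in $(x,y)$ with Lipschitz constants bounded uniformly in $t$ (using \eqref{eq:vNorm}--\eqref{eq:vLip} together with $\theta^\ast\in C([0,T];\R^m)\subset L^\infty$ and the compact support of $\zeta$), so by the Kantorovich--Rubinstein duality (\cref{lem:Kantrovich}) and \cref{cor:WassHolder} each is bounded by $CT\sup_{t\in[0,T]}W_2(\mu^{n'}_t,\mu^\ast_t)$, which vanishes by \eqref{eq:mu_wk}.

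The main obstacle is the term $J_{2a}=\int_0^T\intRdY\nabla_x\zeta_t\cdot\qty(v(\bullet,\theta^{n'}_t)-v(\bullet,\theta^\ast_t))\dd{\mu^{n'}_t\dd t}$, which is precisely where the nonlinearity of $v$ enters and where the bilinearity trick is unavailable. Here I would use that $\nabla_x\zeta_t$ is supported in a fixed compact set $K\subset\R^d\times\Y$ (and, if convenient, that $\supp\mu^{n'}_t\subset B_{\R^d\times\Y}(R^\ast)$ uniformly by \cref{lem:cptspt_H^1}), so that only $(x,y)$ in a fixed compact set and $\theta$ ranging over a bounded subset of $\R^m$ (namely the values $\theta^{n'}_t,\theta^\ast_t$, uniformly bounded thanks to \eqref{eq:Cstrong}) are relevant. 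On this compact set $v$ is uniformly continuous, hence admits a modulus of continuity $\omega$ with $\sup_{(x,y)\in K,\,t}\abs{v(x,\theta^{n'}_t)-v(x,\theta^\ast_t)}\le\omega\qty(\norm{\theta^{n'}-\theta^\ast}_{C([0,T];\R^m)})$; since $\mu^{n'}_t$ is a probability measure and $\nabla_x\zeta$ is bounded, this yields $\abs{J_{2a}}\le CT\,\omega\qty(\norm{\theta^{n'}-\theta^\ast}_{C([0,T];\R^m)})\to0$ by the uniform convergence \eqref{eq:Cstrong}. Combining the three estimates and letting $n'\to\infty$ gives \eqref{eq:CEast}, completing the proof of the claim.
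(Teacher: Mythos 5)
Your proof is correct, and it reaches the conclusion by a genuinely different (and slightly leaner) decomposition than the paper's. The paper reuses the five-term splitting \eqref{eq:claim} from \cref{claim:CE}: $I_1$ and $I_2$ are handled exactly as you handle $J_1$ and $J_{2b}$ (Kantorovich--Rubinstein duality plus the uniform $W_2$-convergence \eqref{eq:mu_wk}); the term $I_3$ (the velocity increment integrated against $\mu^\ast$) is sent to zero by the uniform convergence \eqref{eq:Cstrong} together with the continuity of $v$ in $\theta$; and the cross term $I_4$ (the velocity increment against $\mu^{n}-\mu^\ast$) is estimated via the Lipschitz bound \eqref{eq:vLip} and $W_1$. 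You instead merge $I_3+I_4$ into the single term $J_{2a}$ integrated against $\mu^{n'}$ and control it by uniform continuity of $v$ on the compact set $K\times\overline{B}(M)$, where $M$ bounds $\sup_{n'}\norm{\theta^{n'}}_{C([0,T];\R^m)}$; this is legitimate because \eqref{eq:Cstrong} gives both boundedness and convergence of the parameters in the sup norm, and the $\mu^{n'}_t$ are probability measures so the bound $\abs{J_{2a}}\le CT\,\omega(\norm{\theta^{n'}-\theta^\ast}_{C([0,T];\R^m)})$ follows. Your route is marginally more economical on that term: it uses only the plain continuity of $v$ (plus compactness of supports), whereas the paper additionally invokes \eqref{eq:vLip} for $I_4$. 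Both arguments hinge on the structural point you correctly identify, namely that the $H^1$-regularization upgrades the convergence of the parameters to uniform convergence via the Sobolev embedding, which is what renders the nonlinear dependence $\theta\mapsto v(\bullet,\theta)$ tractable without the bilinearity trick \eqref{eq:product} of \cref{claim:CE}.
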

\begin{proof} 
As in the proof of \cref{claim:CE}, the proof is completed by taking the limits of $I_1$ to $I_4$ in \eqref{eq:claim}. From the proof of \cref{claim:CE}, $I_1,I_2\to0$ as $n\to\infty$.
Also, $I_3\to0$ as $n\to\infty$ by the uniform convergence \eqref{eq:Cstrong}, and the continuity of $v(x,\theta)$ with respect to $\theta$.
For $I_4$, it follows from \eqref{eq:vLip}, \eqref{eq:H1wk} and \eqref{eq:mu_wk} that
\begin{align*}
    \abs{I_4}&\leq{\int_0^T\Lip\qty(\nabla_x\zeta_t\cdot\qty(v\qty(\bullet,\theta^n_t)-v\qty(\bullet,\theta_t^\ast)))W_1(\mu^n_t,\mu^\ast_t)\dd{t}}\\
    &\leq C\qty(\norm{\theta^n}_{L^l(0,T;\R^m)}^l+\norm{\theta^\ast}_{L^l(0,T;\R^m)}^l)\sup_{t\in[0,T]}W_1(\mu^n_t,\mu^\ast_t)\\
    &\leq C\qty(\norm{\theta^n}_{H^1(0,T;\R^m)}^l+\norm{\theta^\ast}_{H^1(0,T;\R^m)}^l)\sup_{t\in[0,T]}W_2(\mu^n_t,\mu^\ast_t)\\
    &\leq C\sup_{t\in[0,T]}W_2(\mu^n_t,\mu^\ast_t)\to0\text{ as }n\to\infty.
\end{align*}
Thus we obtain the conclusion.
\end{proof}
We resume the proof of \cref{thm:existence_of_H^1}.
From \cref{claim:CE2}, we have $(\mu^\ast,\theta^\ast)\in S$. In addition, $J_{H^1}$ is lower semicontinuous from \eqref{eq:continuity_ell} and the weak lower semi-continuity of the $H^1$-norm $\norm{\bullet}_{H^1(0,T;\R^m)}$. 
The proof is complete.
\end{proof}
\section{Convexity Assumptions}\label{appendix:convexity}
For comparison, using the proof technique by Bonnet et al.\ \cite{bonnet2022measure}, we show that a unique minimizer to \cref{prob:kinetic_prob} exists. This proof technique makes use of the idea that we can regard the functional $J$ as a univariate functional $\widetilde{J}(\theta)\coloneqq J(\mu^\theta, \theta)$, where $\mu^\theta\in C([0,T];\mathcal{P}(\R^d\times\Y))$ is a solution of \eqref{eq:ODE2} for a given $\theta\in L^2(0,T;\R^m)$. The existence and uniqueness of the solution can be proved by showing the convexity of $\widetilde{J}$. For this purpose, we evaluate the Lipschitz constant of the Fr\'echet derivative $\nabla_\theta\widetilde{J}$ of $\widetilde{J}$.
\par
First, recall from \cref{lem:ODErep} that $\mu^\theta$ is represented as $\mu^\theta_t=\qty(\Phi^\theta(0,t;\bullet)\times\mathrm{Id}_\Y)_\#\mu_0$ using a flow map $\Phi^\theta\colon[0,T]^2\times\R^d\to\R^d$, $\theta\in L^2(0,T;\R^m)$ according to the ODE 
\begin{equation}
    \left\{
    \begin{aligned}
       \partial_t\Phi^\theta\qty(t_0,t;x)&=v\qty(\Phi^\theta(t_0,t;x),\theta_t),\\
      \Phi^\theta(t_0,t_0;x)&=x.
    \end{aligned}
    \right. 
    \label{eq:flow}
\end{equation}
We note here that from \eqref{eq:linear_NN} it can be verified that the Lipschitz continuity assumption \eqref{eq:Lipschitz} is satisfied, as in the proof of \cref{lem:cptspt}.  The derivative of $\Phi$ with respect to $\theta$ can be described by a linearization of \eqref{eq:flow}.
\begin{lemma}[Taylor expansion of $\Phi^\theta$]\label{lem:Taylor}
    Suppose that the neural network $v$ satisfies \cref{assump:linear_NN} and $f\colon\R^d\to\R^p$ is differentiable. Then, for every $\theta,\vartheta\in L^2\qty(0,T;\R^m)$, the Taylor expansion
    \[
    \Phi^{\theta+\epsilon\vartheta}(t_0,t;x)=\Phi^\theta(t_0,t;x)+\epsilon\int_0^t\Delta^\theta_{(s,t)}(x)\vartheta_s f\qty(\Phi^\theta(t,s;x))\dd{s}+o(\epsilon)
    \]
    holds in $C\qty(\qty[0,t]\times\supp\mu_0;\R^d\times\Y)$, where, for $(t_0,x)\in[0,T]\times\R^d$, the map $[0,T]\ni t\longmapsto\Delta^\theta_{(t_0,t)}(\bullet)\in C\qty(\R^d;\R^{d\times d})$ is the unique solution of the linearized Cauchy problem
    \begin{equation}
    \left\{
    \begin{aligned}
       \partial_t\Delta_{(t_0,t)}^\theta\qty(x)&=\theta_t Jf\qty(\Phi^\theta(t_0,t;x))\Delta_{(t_0,t)}^\theta\qty(x),\\
      \Delta_{(t_0,t_0)}^\theta(x)&=\Id_{\R^d},
    \end{aligned}
    \right. 
    \label{eq:linearized_ODE}
    \end{equation}
    where $Jf\colon\R^d\to\R^p$ denotes the Jacobian matrix of $f$.
\end{lemma}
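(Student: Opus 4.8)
The plan is to read this as the classical statement of first-order dependence of an ODE flow on a parameter, specialized to the affine field $v(x,\theta)=\theta f(x)$, with the dependence made explicit through a variation-of-constants formula driven by the resolvent $\Delta^\theta$. First I would record what is already available from the proof of \cref{lem:cptspt}: for every $\theta\in L^2(0,T;\R^m)$ the Carath\'{e}odory ODE \eqref{eq:flow} has a unique solution $\Phi^\theta$, and since $\supp\mu_0$ is compact there is a compact $K\subset\R^d$, with radius controlled by $\norm{\theta}_{L^2(0,T;\R^m)}$ and $\norm{\vartheta}_{L^2(0,T;\R^m)}$, that contains every trajectory $t\mapsto\Phi^{\theta+\epsilon\vartheta}(t_0,t;x)$ for all $x\in\supp\mu_0$ and $\abs{\epsilon}\le 1$. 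On $K$ the map $f$ is Lipschitz with bounded Jacobian $Jf$; this uniform control is what makes the subsequent estimates uniform in $x$.

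Next I would set up the linearized (variational) equation. Writing $w^\epsilon_t\coloneqq\Phi^{\theta+\epsilon\vartheta}(t_0,t;x)-\Phi^\theta(t_0,t;x)$ and subtracting the two instances of \eqref{eq:flow} gives
\[
\dot w^\epsilon_t=\theta_t\bigl(f(\Phi^{\theta+\epsilon\vartheta})-f(\Phi^\theta)\bigr)+\epsilon\,\vartheta_t\,f(\Phi^{\theta+\epsilon\vartheta}),\qquad w^\epsilon_{t_0}=0.
\]
Dividing by $\epsilon$ and using differentiability of $f$ in the form $f(\Phi^{\theta+\epsilon\vartheta})-f(\Phi^\theta)=Jf(\Phi^\theta)w^\epsilon+o(\abs{w^\epsilon})$, the difference quotient $w^\epsilon/\epsilon$ is expected to converge to the solution $y$ of the linear inhomogeneous Cauchy problem
\[
\dot y_t=\theta_t\,Jf(\Phi^\theta(t_0,t;x))\,y_t+\vartheta_t\,f(\Phi^\theta(t_0,t;x)),\qquad y_{t_0}=0.
\]

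I would then solve this by variation of constants. The homogeneous part of the $y$-equation is governed exactly by the resolvent $\Delta^\theta$ of \eqref{eq:linearized_ODE}, since by the cocycle identity $\Phi^\theta(t_0,t;x)=\Phi^\theta(s,t;\Phi^\theta(t_0,s;x))$ the state-transition matrix along the trajectory $\Phi^\theta(t_0,\bullet;x)$ from $s$ to $t$ equals $\Delta^\theta_{(s,t)}(\Phi^\theta(t_0,s;x))$. Duhamel's formula then yields
\[
y_t=\int_{t_0}^{t}\Delta^\theta_{(s,t)}\bigl(\Phi^\theta(t_0,s;x)\bigr)\,\vartheta_s\,f\bigl(\Phi^\theta(t_0,s;x)\bigr)\,\dd{s},
\]
which is the claimed leading term after the above rebasing. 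It remains to upgrade this formal identity to a genuine bound in $C([0,t]\times\supp\mu_0;\R^d)$: setting $r^\epsilon_t\coloneqq w^\epsilon_t-\epsilon\,y_t$, its equation has the same linear part $\theta_t Jf(\Phi^\theta)$ and a source that is $o(\epsilon)$ uniformly, arising from the first-order Taylor remainder of $f$ together with $\epsilon\,\vartheta_t(f(\Phi^{\theta+\epsilon\vartheta})-f(\Phi^\theta))$. Gr\"{o}nwall's inequality with the integrable weight $\Lip_K(f)\int_{t_0}^{t}\abs{\theta_s}\,\dd{s}$ then gives $\sup_t\sup_{x\in\supp\mu_0}\abs{r^\epsilon_t}=o(\epsilon)$.

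I expect the main obstacle to be precisely this last, uniform-in-$x$ remainder estimate. Because $\theta$ and $\vartheta$ are only $L^2$ in time, the coefficients are unbounded, so every Gr\"{o}nwall constant must be tracked in the form $\exp\bigl(\Lip_K(f)\int_0^T\abs{\theta_s}\,\dd{s}\bigr)$ and shown finite; and the $o(\abs{w^\epsilon})$ term coming from differentiability of $f$ must be made uniform over the compact set $K$. The latter is immediate when $Jf$ is locally uniformly continuous, i.e.\ $f\in C^1$, and with mere differentiability needs a short additional argument exploiting uniform smallness of the Taylor remainder on $K$ together with $\sup_{x}\abs{w^\epsilon}\to 0$ as $\epsilon\to 0$.
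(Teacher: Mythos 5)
Your proposal is correct and follows essentially the same route as the paper, which gives no argument of its own but simply cites \cite[Theorem 3.2.6]{bressan2007}; the proof of that cited result is precisely the linearization--Duhamel--Gr\"{o}nwall scheme you sketch. Your additional attention to the two points where the present setting departs from the textbook one --- the merely $L^2$-in-time coefficients (so the Gr\"{o}nwall weight must be checked to be integrable) and the uniformity of the Taylor remainder when $f$ is only differentiable rather than $C^1$ --- is exactly what a self-contained proof here would need to supply.
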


\begin{proof} 
See \cite[Theorem 3.2.6]{bressan2007}.
\end{proof}
To evaluate the Lipscitz continuity of $\nabla_\theta\widetilde{J}$, we need to estimate the variation of $\Delta^\theta_{(0,t)}$ with respect to $\theta$.
This evaluation requires us to impose a further assumption on $v(x,\theta)=\theta f(x)$ in addition to \cref{assump:linear_NN}. In the following, we will denote $R^\ast(\norm{\theta})$ the same radius as in \cref{lem:cptspt}.
\begin{assumption}[Strong smoothness on $v$]\label{assump:additional}
The function $f$ is twice continuously differentiable, and for given $\theta\in L^2(0,T;\R^m)$ and $(x,y)\in B(R^\ast(\norm{\theta}))$, $f$ satisfies $\norm{f}_{C^1(\R^p;\R^d)}<\infty$.
\end{assumption}
This assumption correspond to \cite[Assumption 2]{bonnet2022measure}. Under this assumption, we can show the Lipschitz continuity by the same argument as in the proof of \cite[Lemma 3.1]{bonnet2022measure}.
\begin{lemma}[Fr\'echet-differentiablity of the loss functional]\label{lem:Frechet}
The sum of loss and kinetic regularization
\[
J_\ell\colon\theta\longmapsto\intRdY\ell\dd{\mu_T^\theta}+\frac{\lambda}{2}\int_0^T\intRd\abs{v(x,\theta)}^2\dd{\mu_t^\theta\dd t}
\]
is Fr\'echet-differentiable. In addition, for $\theta^1,\theta^2\in L^2(0,T;\R^m)$, there exists $C\qty(\lambda,\norm{\theta^1},\norm{\theta^2})>0$ such that
\[
    \norm{\nabla J_\ell(\theta^1)-\nabla J_\ell(\theta^2)}\leq C\qty(\lambda,\norm{\theta^1},\norm{\theta^2})\norm{\theta^1-\theta^2}.
\]
\end{lemma}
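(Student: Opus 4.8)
The plan is to follow the strategy of \cite[Lemma 3.1]{bonnet2022measure}, exploiting the flow-map representation $\mu_t^\theta=(\Phi^\theta(0,t;\bullet)\times\Id_\Y)_\#\mu_0$ from \cref{lem:ODErep} to reduce every integral against the evolving measure $\mu_t^\theta$ to an integral against the fixed data $\mu_0$. Concretely, I would first rewrite
\[
J_\ell(\theta)=\intRdY\ell\qty(\Phi^\theta(0,T;x),y)\dd{\mu_0(x,y)}+\frac{\lambda}{2}\int_0^T\intRdY\abs{\theta_t f\qty(\Phi^\theta(0,t;x))}^2\dd{\mu_0(x,y)\dd t},
\]
so that the only $\theta$-dependence sits inside the flow map $\Phi^\theta$ and, in the kinetic term, additionally in the explicit factor $\theta_t$. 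By \cref{lem:cptspt}, all trajectories $\Phi^\theta(0,t;x)$ with $(x,y)\in\supp\mu_0$ remain inside the fixed ball $B(R^\ast)$ for $t\in[0,T]$, which lets me treat $f$ and its derivatives as bounded on a compact set throughout.

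Next I would compute the G\^ateaux derivative. Fixing a direction $\vartheta\in L^2(0,T;\R^m)$, \cref{lem:Taylor} supplies the first-order expansion of $\Phi^{\theta+\epsilon\vartheta}$ in terms of the sensitivity $\Delta^\theta_{(s,t)}$. Differentiating the loss term by the chain rule (which requires $\ell\in C^1$, a regularity assumed throughout this convexity analysis in line with \cite{bonnet2022measure}) and the kinetic term by combining this expansion with the explicit $\theta_t$-dependence and \cref{assump:additional} on $f$, I obtain a directional derivative that is linear and bounded in $\vartheta$. Writing it as $\langle\nabla J_\ell(\theta),\vartheta\rangle_{L^2}$ identifies the gradient $\nabla J_\ell(\theta)\in L^2(0,T;\R^m)$ via Riesz representation; explicitly its value at time $s$ pairs the $\Delta^\theta_{(s,T)}$-transported copy of $\nabla_x\ell$, together with the $\theta_t$-weighted kinetic contributions, against $f\qty(\Phi^\theta(0,s;x))$. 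Showing that the assignment $\theta\mapsto\nabla J_\ell(\theta)$ is continuous then upgrades G\^ateaux to Fr\'echet differentiability by the standard criterion.

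The core of the argument — and the main obstacle — is the local Lipschitz estimate for $\nabla J_\ell$. This rests on two stability estimates in $\theta$, both obtained through Gr\"onwall's inequality on the compact set $B(R^\ast)$. First, subtracting the flow equations \eqref{eq:flow} for $\theta^1$ and $\theta^2$ and using the Lipschitz bound on $f$ yields $\sup_{t}\norm{\Phi^{\theta^1}(0,t;\bullet)-\Phi^{\theta^2}(0,t;\bullet)}_{\infty}\le C(\norm{\theta^1},\norm{\theta^2})\norm{\theta^1-\theta^2}$. Second, and more delicately, subtracting the linearized equations \eqref{eq:linearized_ODE} for the two parameters and invoking \cref{assump:additional} (so that $Jf$ is itself Lipschitz on $B(R^\ast)$, controlled by the $C^2$-data of $f$) gives an analogous bound $\sup_{s,t}\abs{\Delta^{\theta^1}_{(s,t)}-\Delta^{\theta^2}_{(s,t)}}\le C(\norm{\theta^1},\norm{\theta^2})\norm{\theta^1-\theta^2}$. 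Feeding these two estimates, together with the first flow-stability bound, into the explicit expression for $\nabla J_\ell$ and collecting terms produces the claimed inequality with a constant $C(\lambda,\norm{\theta^1},\norm{\theta^2})$. The bookkeeping is precisely where the $C^2$-smoothness of $f$ (\cref{assump:additional}) and the uniform support bound $R^\ast$ (\cref{lem:cptspt}) become indispensable: without the former the sensitivity $\Delta^\theta$ would fail to vary Lipschitz-continuously in $\theta$, and without the latter none of the Gr\"onwall constants would be uniform in $\theta^1,\theta^2$.
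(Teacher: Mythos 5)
Your proposal is correct and takes essentially the same route the paper intends: the paper supplies no written proof of this lemma, deferring entirely to ``the same argument as in the proof of [Lemma 3.1]'' of Bonnet et al., which is precisely the flow-map representation plus Gr\"onwall stability strategy you spell out (Taylor expansion of $\Phi^\theta$ via \cref{lem:Taylor}, uniform support bound from \cref{lem:cptspt}, and stability of both the flow and the sensitivity $\Delta^\theta$ under \cref{assump:additional}). Your explicit flagging of the $C^1$ regularity needed on $\ell$ for the chain rule is a hypothesis the paper leaves unstated but does indeed require.
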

 From \cref{lem:Frechet}, the following corollary follows immediately.
 
\begin{corollary}[Semiconvexity for the parameter $\theta$]\label{cor:semiconv}
The functional
\begin{equation}
    \widetilde{J}\colon\theta\longmapsto J(\mu^\theta,\theta)=\int_{\mathbb{R}^d\times\Y}\ell\dd{\mu_T^\theta}+\int_0^T\int_{\mathbb{R}^d} 
    \qty({\frac{\lambda}{2}\abs{v\qty(x,\theta_t)}^{2
    }}+\frac{\epsilon}{2}\abs{\theta_t}^2)\dd{\mu_t^\theta(x)\dd t}\label{eq:Jtilde}    
\end{equation}
satisfies
\[
\widetilde{J}\qty(\qty(1-\zeta)\theta^1+\zeta\theta^2)\leq\qty(1-\zeta)\widetilde{J}\qty(\theta^1)+\zeta\widetilde{J}\qty(\theta^2)-\qty(\epsilon-C\qty(\lambda,\norm{\theta^1},\norm{\theta^2}))\frac{\zeta(1-\zeta)}{2}\norm{\theta^1-\theta^2}^2,
\]
for any $\theta^1,\theta^2\in L^2\qty(0,T;\R^m)$ and $\zeta\in\qty[0,1]$. Here $C\qty(\lambda,\norm{\theta^1},\norm{\theta^2})$ is the same positive number as in \cref{lem:Frechet}.
\end{corollary}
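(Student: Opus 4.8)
The plan is to reduce the claim to the textbook fact that a $C^1$ functional with Lipschitz-continuous gradient is semiconvex, and then to split off the exact quadratic contribution of the $L^2$-regularization. First I would observe that, since each $\mu_t^\theta$ is a probability measure on $\R^d\times\Y$ and $\abs{\theta_t}^2$ does not depend on the spatial variable, the $L^2$-regularization term in \eqref{eq:Jtilde} integrates out to
\[
\int_0^T\intRd\frac{\epsilon}{2}\abs{\theta_t}^2\dd{\mu_t^\theta(x)\dd t}=\frac{\epsilon}{2}\norm{\theta}_{L^2(0,T;\R^m)}^2,
\]
so that $\widetilde{J}(\theta)=J_\ell(\theta)+\frac{\epsilon}{2}\norm{\theta}_{L^2(0,T;\R^m)}^2$, where $J_\ell$ is precisely the loss-plus-kinetic functional treated in \cref{lem:Frechet}.

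Next I would fix $\theta^1,\theta^2\in L^2(0,T;\R^m)$ and restrict $J_\ell$ to the segment joining them, setting $\phi(s)\coloneqq J_\ell((1-s)\theta^1+s\theta^2)$ for $s\in[0,1]$. By the Fréchet-differentiability asserted in \cref{lem:Frechet}, $\phi$ is $C^1$ with $\phi'(s)=\la \nabla J_\ell((1-s)\theta^1+s\theta^2),\,\theta^2-\theta^1\ra$, and the Lipschitz bound on $\nabla J_\ell$ together with Cauchy--Schwarz shows that $\phi'$ is Lipschitz on $[0,1]$ with constant $C(\lambda,\norm{\theta^1},\norm{\theta^2})\norm{\theta^1-\theta^2}^2$. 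Since a real function whose derivative is $M$-Lipschitz admits the convex perturbation $s\mapsto\phi(s)+\tfrac{M}{2}s^2$, evaluating convexity at $s=\zeta=(1-\zeta)\cdot 0+\zeta\cdot 1$ yields the one-dimensional semiconvexity estimate
\[
J_\ell((1-\zeta)\theta^1+\zeta\theta^2)\le(1-\zeta)J_\ell(\theta^1)+\zeta J_\ell(\theta^2)+\frac{C(\lambda,\norm{\theta^1},\norm{\theta^2})}{2}\,\zeta(1-\zeta)\norm{\theta^1-\theta^2}^2.
\]

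Finally I would add back the quadratic term using the exact identity $\norm{(1-\zeta)\theta^1+\zeta\theta^2}^2=(1-\zeta)\norm{\theta^1}^2+\zeta\norm{\theta^2}^2-\zeta(1-\zeta)\norm{\theta^1-\theta^2}^2$, which contributes $-\frac{\epsilon}{2}\zeta(1-\zeta)\norm{\theta^1-\theta^2}^2$. Summing the two contributions collapses the coefficient of $\zeta(1-\zeta)\norm{\theta^1-\theta^2}^2$ to $\frac{1}{2}\bigl(C(\lambda,\norm{\theta^1},\norm{\theta^2})-\epsilon\bigr)$, which is exactly the asserted inequality. The only point requiring care is that the Lipschitz constant of \cref{lem:Frechet} depends on $\norm{\theta^1}$ and $\norm{\theta^2}$: since along the segment the norm never exceeds $\max(\norm{\theta^1},\norm{\theta^2})$, the constant $C(\lambda,\norm{\theta^1},\norm{\theta^2})$, monotone in its norm arguments, dominates the local Lipschitz constant of $\nabla J_\ell$ uniformly along the whole segment, so the one-dimensional argument goes through with this single constant. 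I expect this bookkeeping of the norm-dependent constant to be the sole genuine subtlety; everything else is the standard passage from a Lipschitz gradient to semiconvexity.
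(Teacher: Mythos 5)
Your argument is correct and is exactly the standard deduction the paper has in mind: the paper states that \cref{cor:semiconv} ``follows immediately'' from \cref{lem:Frechet} without writing out the details, and your proof (split $\widetilde{J}=J_\ell+\tfrac{\epsilon}{2}\norm{\theta}^2$, use the Lipschitz gradient along the segment to get semiconvexity of $J_\ell$, and apply the exact quadratic identity for the $L^2$ term) is precisely that omitted computation. Your remark about the norm-dependence of the constant along the segment is a fair point of care, and it is handled correctly.
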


By this corollary, $\widetilde{J}$ is strongly convex on a $L^2$ ball if $\epsilon$ is sufficiently large compared to other parameters such as $\lambda$ and $T$. This plays an essential role in the proof of \cref{thm:existence_conv} below.

\begin{theorem}[Existence and uniqueness of  minimizer of $\widetilde{J}$]\label{thm:existence_conv}
Suppose that \cref{assump:linear_NN,assump:additional} If $\epsilon>0$ in \eqref{eq:Jtilde} is sufficiently large, there exists $\theta\in L^2(0,T;\R^m)$ which minimize $\widetilde{J}$, and $\theta$ is a unique minimizer of $\widetilde{J}$.
\end{theorem}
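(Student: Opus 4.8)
The plan is to obtain existence almost for free from the machinery already established in \cref{sec:kin}, and to extract uniqueness from the semiconvexity recorded in \cref{cor:semiconv}. The only genuine subtlety is that the constant $C(\lambda,\norm{\theta^1},\norm{\theta^2})$ in \cref{cor:semiconv} grows with the norms of its arguments, so strong convexity of $\widetilde{J}$ is available only on bounded balls of $L^2(0,T;\R^m)$. The role of ``$\epsilon$ sufficiently large'' is precisely to force every candidate minimizer into a \emph{fixed} ball on which $\epsilon$ dominates this constant, and the bookkeeping that makes this self-consistent is the point I expect to require the most care.

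For existence I would first note that, under \cref{assump:linear_NN}, the conditions \eqref{eq:integrable1} and \eqref{eq:Lipschitz} hold (exactly as checked in the proof of \cref{lem:cptspt}); hence by \cref{lem:ODErep} the continuity equation \eqref{eq:ODE2} admits, for each $\theta\in L^2(0,T;\R^m)$, a \emph{unique} distributional solution $\mu^\theta$. Therefore $\theta\longmapsto(\mu^\theta,\theta)$ is a bijection onto the admissible set of pairs solving \eqref{eq:ODE2}, so that $\inf_\theta\widetilde{J}(\theta)=\inf_S J$ with minimizers in correspondence. Since \cref{assump:linear_NN} holds and \cref{assump:label_loss} is in force, \cref{thm:existence_of_kineticODE_Net} applies and yields a minimizer $(\mu^\ast,\theta^\ast)$ of $J$; by the identification $\mu^\ast=\mu^{\theta^\ast}$ the parameter $\theta^\ast$ minimizes $\widetilde{J}$.

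Next I would establish coercivity and an a priori ball. Since $\ell\ge 0$ and the kinetic term is nonnegative, and since each $\mu^\theta_t$ is a probability measure, one has $\widetilde{J}(\theta)\ge\tfrac{\epsilon}{2}\norm{\theta}_{L^2(0,T;\R^m)}^2$; moreover $\theta\equiv 0$ gives $v(\bullet,0)\equiv 0$, hence $\mu^0_t\equiv\mu_0$ and $\widetilde{J}(0)=\intRdY\ell\dd{\mu_0}=:L_0<\infty$. Consequently every minimizer satisfies $\tfrac{\epsilon}{2}\norm{\theta}^2\le\widetilde{J}(\theta)\le\widetilde{J}(0)=L_0$, so $\norm{\theta}\le\sqrt{2L_0/\epsilon}$. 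Fixing any reference $\epsilon_0>0$, for all $\epsilon\ge\epsilon_0$ every minimizer lies in the fixed ball of radius $M_0:=\sqrt{2L_0/\epsilon_0}$.

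Finally, for uniqueness I would choose $\epsilon>\max\{\epsilon_0,\,C(\lambda,M_0,M_0)\}$, which is the precise meaning of ``sufficiently large''. If $\theta^1,\theta^2$ were two minimizers, both lie in the ball of radius $M_0$, so by monotonicity of $C$ in its norm arguments $\epsilon>C(\lambda,\norm{\theta^1},\norm{\theta^2})$. Applying \cref{cor:semiconv} with $\zeta=\tfrac12$ and using that $\widetilde{J}$ evaluated at $\tfrac{\theta^1+\theta^2}{2}$ is at least $\inf\widetilde{J}=\tfrac12\widetilde{J}(\theta^1)+\tfrac12\widetilde{J}(\theta^2)$, the strong-convexity remainder forces $(\epsilon-C(\lambda,\norm{\theta^1},\norm{\theta^2}))\norm{\theta^1-\theta^2}^2\le 0$, whence $\theta^1=\theta^2$. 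The main obstacle is exactly the compatibility in the previous paragraph: I must verify that the ball radius forced by coercivity and the norm-dependence of the semiconvexity constant can be reconciled. This works because $C(\lambda,\cdot,\cdot)$ stays bounded on bounded sets (via \cref{lem:Frechet}), while coercivity confines all minimizers to a ball whose radius is fixed once $\epsilon_0$ is chosen, so enlarging $\epsilon$ beyond $C(\lambda,M_0,M_0)$ does not enlarge the ball it must dominate.
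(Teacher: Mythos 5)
Your argument is correct in substance, but it reaches existence by a genuinely different route than the paper. The paper's own proof does not invoke \cref{thm:existence_of_kineticODE_Net} at all: it runs the direct method from scratch on $\widetilde{J}$, extracts a weakly convergent minimizing subsequence from the bound $\frac{\epsilon}{2}\norm{\theta^n}^2\le C$, observes via \cref{cor:semiconv} that $\widetilde{J}$ is convex on the relevant ball once $\epsilon$ is large, and then uses Mazur's lemma to produce a \emph{strongly} convergent minimizing sequence of convex combinations, so that lower semicontinuity of $\widetilde{J}$ (which here follows from the Fr\'echet differentiability in \cref{lem:Frechet}, hence from \cref{assump:additional}) closes the argument. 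You instead identify $\inf_\theta\widetilde{J}=\inf_S J$ through the uniqueness of $\mu^\theta$ furnished by \cref{lem:ODErep} and import the minimizer from \cref{thm:existence_of_kineticODE_Net}; this is legitimate and shorter, but it silently imports \cref{assump:label_loss}, which is not among the stated hypotheses of the theorem (the paper's route needs only the continuity of $J_\ell$ coming from \cref{lem:Frechet}), so you should either add that hypothesis or note that it is implicit in the appendix's setting. What your approach buys is a much cleaner treatment of the quantifier structure behind ``$\epsilon$ sufficiently large'': the a priori bound $\norm{\theta}\le\sqrt{2L_0/\epsilon}\le M_0$ obtained from $\widetilde{J}(\theta)\le\widetilde{J}(0)$ pins all minimizers into a ball that does not grow with $\epsilon$, which is exactly the self-consistency the paper's phrase ``convex on $B(2C/\epsilon)$'' glosses over. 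The uniqueness step via the midpoint and the semiconvexity remainder is the same as the paper's appeal to strong convexity; the only loose end there is your unstated (but harmless) assumption that $C(\lambda,\cdot,\cdot)$ is monotone, i.e.\ bounded on bounded sets of norms.
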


The proof is carried out using the direct method of Calculus of Variations as in \cref{thm:existence_of_kineticODE_Net}.

\begin{proof} 
It is clear that $0\leq\inf\widetilde{J}<+\infty$, then we can take a minimizing sequence $(\theta^n)_{n=1}^\infty\subset L^2(0,T;\R^m)$ such that $\widetilde{J}(\theta^n)\to\inf\widetilde{J}$ as $n\to\infty$. Thus, there exists $C>0$ independent of $n$ such that \eqref{eq:parameter_bound} holds. Then there exists a subsequence $(n')\subset\Z_{>0}$ such that \eqref{eq:theta_weak}. In addition, by \cref{cor:semiconv}, we see that there exists $\epsilon>0$ such that $\widetilde{J}$ is convex on $B(2C/\epsilon)$. Therefore, by Mazur's lemma, there exists another minimizing sequence $(\hat{\theta}^n)_{n=1}^\infty$ such that $\hat{\theta}_n\to\theta$ in $L^2(0,T;\R^m)$. Because $\widetilde{J}$ is lower semicontinuous, we conclude that
\[
    J(\theta)\leq\liminf_{n\to\infty}J(\hat{\theta}_n)=\inf_{L^2(0,T;\R^m)}\widetilde{J},
\]
i.e., $\theta$ is a minimizer of $\widetilde{J}$. The uniqueness is immediately obtained from the strong convexity of $\widetilde{J}$.
\end{proof}
\section*{Acknowledgement}
The author, N.I., would like to thank his supervisor, Norikazu Saito, for his encouragement and advice during the preparation of the paper.

\printbibliography
\end{document}